\numberwithin{equation}{section}
\def\argmax{\mathop{\rm arg\, max}}
\def\argmin{\mathop{\rm arg\, min}}
\newcommand{\bel}{\begin{eqnarray}\label}
\newcommand{\eel}{\end{eqnarray}}
\newcommand{\bes}{\begin{eqnarray*}}
\newcommand{\ees}{\end{eqnarray*}}
\newcommand{\bei}{\begin{itemize}}
\newcommand{\eei}{\end{itemize}}
\newcommand{\beiftnt}{\begin{itemize}\footnotesize}
\def\benu{\begin{enumerate}}
\def\eenu{\end{enumerate}}
\def\argmax{\mathop{\rm arg\, max}}
\def\argmin{\mathop{\rm arg\, min}}
\def\real{{\mathbb{R}}}
\def\R{{\real}}
\def\E{{\mathbb{E}}}
\def\P{{\mathbb{P}}}
\def\complex{\mathop{{\rm I}\kern-.58em\hbox{\rm C}}\nolimits}
\def\pa{\partial}
\def\rank{\hbox{\rm rank}}
\def\sgn{\hbox{\rm sgn}}
\DeclareMathOperator{\trace}{trace}
\def\Var{\hbox{\rm Var}}
\DeclareMathOperator{\supp}{supp}
\def\mathbold{\boldsymbol} 
\def\ba{\mathbold{a}}
\def\bA{\mathbold{A}}
\def\bb{\mathbold{b}}
\def\bB{\mathbold{B}}
\def\bg{\mathbold{g}}
\def\bh{\mathbold{h}}
\def\bI{\mathbold{I}}
\def\bP{\mathbold{P}}
\def\bQ{\mathbold{Q}}
\def\Rhat{\widehat{R}}
\def\bs{\mathbold{s}}
\def\Shat{{\widehat{S}}}
\def\be{\mathbold{e}}
\def\bu{\mathbold{u}}
\def\bv{\mathbold{v}}
\def\bw{\mathbold{w}}
\def\bx{\mathbold{x}}
\def\bX{\mathbold{X}}
\def\by{\mathbold{y}}
\def\bY{\mathbold{Y}}
\def\bz{\mathbold{z}}
\def\bZ{\mathbold{Z}}
\def\bbeta{\mathbold{\beta}}
\def\hbbeta{\hat{\bbeta}}
\def\bbetabar{{\overline\bbeta}}
\def\ep{\varepsilon}\def\eps{\epsilon}
\def\bep{ {\mathbold{\ep} }}
\def\btheta{\mathbold{\theta}}
\def\hbtheta{{\widehat{\btheta}}}
\def\lam{\lambda}
\def\bmu{\mathbold{\mu}}
\def\hbmu{{\widehat{\bmu}}}\def\tbmu{{\widetilde{\bmu}}}
\def\bSigma{\mathbold{\Sigma}}
\def\bSigmabar{{\overline\bSigma}}
\declaretheorem[name=Theorem,numberwithin=section]{theorem}
\declaretheorem[name=Proposition,sibling=theorem]{proposition}
\declaretheorem[name=Lemma,sibling=theorem]{lemma}
\declaretheorem[name=Corollary,sibling=theorem]{corollary}
\declaretheorem[name=Assumption,numberwithin=section]{assumption}
\declaretheorem[name=Remark,style=remark,numberwithin=section]{remark}
\def\argmax{\mathop{\rm arg\, max}}
\def\argmin{\mathop{\rm arg\, min}}
\def\SURE{\widehat{\mathsf{{\scriptstyle SURE}}}{}}
\def\supp{\hbox{\rm supp}}
\def\RE{{\rm{RE}}}
\DeclareMathOperator{\dv}{div}
\def\hbmu{{\widehat{\bmu}}{}}
\def\df{{\hat{\mathsf{df}}}{}}
\def\tSURE{{{\textsc{\tiny {sure}}}} }
\def\tEN{{{\textsc{\tiny {EN}}}} }
\def\tLasso{{{\textsc{\tiny {lasso}}}} }
\def\tdiff{{{\text{\tiny (diff)}}} }
\def\lasso{\hbbeta{}_\tLasso^{(\lam)}}
\newcommand\lassoWith[1]{\hbbeta{}_\tLasso^{(#1)}}
\begin{document}

\runtitle{Second Order Stein} 
\title{ Second Order Stein: SURE for SURE and Other Applications in
High-Dimensional Inference}
\date{\today}
\begin{aug}
  \author{\fnms{Pierre C.}  \snm{Bellec}\thanksref{t1}\ead[label=e1]{pierre.bellec@rutgers.edu}}
  \and
  \author{\fnms{Cun-Hui} \snm{Zhang}\thanksref{t2}\ead[label=e2]{czhang@stat.rutgers.edu}}

  \thankstext{t1}{
    Research partially supported by the NSF Grant DMS-1811976 
    and NSF CAREER award DMS-1945428.
  }
  
  \thankstext{t2}{ Research partially supported by the NSF Grants DMS-1513378,
  IIS-1407939, DMS-1721495 and IIS-1741390.  }

  \runauthor{Bellec and Zhang}

  \affiliation{Rutgers University}

  \address{Department of Statistics, Hill Center, Busch Campus, \\
      Rutgers University, Piscataway, NJ 08854, USA. \\
  \printead{e1,e2}}

\end{aug}

\begin{abstract}
    Stein's formula states that a random variable of the form
    $\bz^\top f(\bz) - \dv f(\bz)$ is mean-zero for all functions $f$ with integrable gradient.
    Here, $\dv f$ is the divergence of the function $f$ and $\bz$ is a standard normal vector. 
    This paper aims to propose a Second Order Stein formula to characterize the 
    variance of such random variables for all functions $f(\bz)$ with square integrable gradient,  
    and to demonstrate the usefulness of this Second Order Stein formula in various applications. 

    In the Gaussian sequence model, a remarkable consequence of Stein's formula is 
    Stein's Unbiased Risk Estimate 
    (SURE), an unbiased estimate of the mean squared risk for almost any given estimator $\hbmu$ 
    of the unknown mean vector.
    A first application of the Second Order Stein formula 
    is an Unbiased Risk Estimate for SURE itself (SURE for SURE):
    an unbiased estimate {providing} information about the squared
    distance between SURE and the squared estimation error of $\hbmu$.
    SURE for SURE 
    has a simple form as a function of the data and 
    is applicable to all $\hbmu$ with square integrable gradient, 
    for example the Lasso and the Elastic Net.

    In addition to SURE for SURE, the following statistical applications
    are developed: (1) Upper bounds on the risk of SURE when the estimation
    target is the mean squared error;
    (2) Confidence regions based on SURE and using the Second Order Stein formula; 
    (3) Oracle inequalities satisfied by SURE-tuned estimates under
    a mild Lipschtiz assumption;
    (4) An upper bound on the variance of the size of the model selected by the Lasso,
    and more generally an upper bound on the variance of the empirical degrees-of-freedom
    of convex penalized estimators;
    (5) Explicit expressions of SURE for SURE for the Lasso and the Elastic-Net;
    (6)
    In the linear model, a general semi-parametric scheme to de-bias a 
    differentiable initial estimator for the statistical inference of a
    low-dimensional projection of the unknown regression coefficient vector,
    with a characterization of the variance after de-biasing;
    and (7) An accuracy analysis of a Gaussian Monte Carlo scheme to approximate
    the divergence of functions $f:\R^n\to\R^n$.

   

\end{abstract}

\maketitle

{\footnotesize 
Key words: Stein's formula, variance estimate, risk estimate, SURE, SURE for SURE, Lasso, Elastic Net, model selection, variance of model size, de-biased estimation, regression. 
}


\section{Introduction}

    \subsection{Stein's formula}
    The univariate version of Stein's formula reads $\E[Z g(Z)]=\E[g'(Z)]$ where $Z\sim N(0,1)$
    and $g$ is absolutely continuous with finite $\E[|g'(Z)|]$\footnote{
    Existence of $\E|Zg(Z)|<+\infty$ follows from $\E|g'(Z)|<+\infty$.
        Indeed by Fubini's theorem
        $\E |\max(Z,0)\{g(Z)-g(0)\}|
        \le \int_{0}^\infty |g'(u)|\E[Z I_{\{Z\ge u\}}] du
        = \E[I_{Z>0}|g'(Z)|]$
        and similarly for $Z<0$. 
    }.
In \cite{stein1972bound}, Stein observed that the set of equations $\E[Z g(Z) - g'(Z)]=0$
for all absolutely continuous $g$ with integrable gradient characterize the standard normal
distribution. Stein went on to show that if $\E[Wg(W) -g'(W)]$ is small for a large class of functions $g$,
then $W$ is close in distribution to $N(0,1)$, see for instance
\cite[Proposition 1.1]{chatterjee2014short} for a precise statement.
Since then, this powerful technique has been used to obtain central limit
theorems under dependence;
we refer the reader to the book \cite{chen2010normal}
for a recent survey on Stein's formula and its application to normal approximation. 

The multivariate version of Stein's formula \cite{stein1981estimation}
can be described as follows.
Let $\bz=(z_1,...,z_n)^{\top} {\sim N({\bf 0},\bI_n)}$ be a standard normal random vector.
Let $f_1,...,f_n$ be functions $\R^n\to\R$
and denote by $f$ the column vector in $\R^n$ 
with $i$-th component equal to $f_i$.
If each $f_i$ is weakly differentiable with respect to $z_i$ and
    $\E[|f_i(\bz)|+|(\partial/\partial z_i)f_i(\bz)|]<+\infty$ 
    for each $i\in[n]$,
then
\begin{equation}
\E[\bz^\top f(\bz)] = \E[\dv f(\bz)]
\label{stein-first-order-formula}
\end{equation}
holds, where {${\dv } f = \sum_{i=1}^n (\partial/\partial x_i)f_i$ is the divergence of $f$.} 
The central object of the paper is the random variable $\bz^\top f(\bz) - \dv f(\bz)$,
which is mean-zero in virtue of \eqref{stein-first-order-formula}. We provide
in the next section an exact identity for the second moment of $\bz^\top f(\bz) - \dv f(\bz)$
that we termed \emph{Second Order Stein formula}.

\subsection{Related work}
Iterated Stein formulae have appeared in several works. If $g:\R^p\to\R$ is smooth with compact support,
iterating the univariate Stein's formula directly yields
$\E[z_iz_j g(\bz)]=\E[(\partial/\partial x_j)(\partial/\partial x_i) g(\bz)]$ for $i\ne j$
as well as $\E[(z_i^2 -1)g(\bz)] = \E[(\partial^2/\partial x_i^2)g(\bz)]$.
This may be succinctly written as
\begin{equation}
    \label{other-2nd-order-formula-hessian}
    \E[(\bz\bz^\top -\bI_n)g(\bz)]=\E[\nabla^2 g(\bz)]
\end{equation}
where $\nabla^2 g$ is the Hessian of $g$. Formulae that relate $\E[\nabla^2 g(\bz)]$
to expectations involving $g(\bz)$ and the density of $\bz$ have been obtained for non-Gaussian
$\bz$, see
\cite[Theorem 1]{janzamin2014score},
\cite{erdogdu2016newton},
\cite[Theorem 2.2]{yang2017learning}
among others. 
In these works, \eqref{other-2nd-order-formula-hessian}
is used to estimate the Hessian in optimization algorithms \cite{erdogdu2016newton},
to estimate the index in some single-index models \cite{yang2017learning}
or to extract discriminative features for tensor-valued data \cite{janzamin2014score}.
If $\bz\sim N({\bf 0},\bI_n)$, the second order iterated Stein formula
from these works reduce to \eqref{other-2nd-order-formula-hessian}.
The fact that iterating Stein's formula introduces 
higher order derivatives should be expected,
because Stein's formula is intrinsically an integration by parts---Stein's formula
is sometimes referred to as \emph{Gaussian integration by parts}, for instance in
\cite[Appendix A.4]{talagrand2010mean} where statistical physics results are rigorously derived
using Stein's formula and the so-called Smart path method as a central tool.
As we will see throughout the paper, the identity for the variance of 
$\bz^\top f(\bz) - \dv f(\bz)$, {given in \eqref{eq1} below,} 
does not involve the second derivatives of $f$ or require their existence,
in striking contrast to the iterated formula \eqref{other-2nd-order-formula-hessian}. 
{Although} both identities can be seen as second order Stein formulae,
{they} are of a different nature {and aim different applications.} 

\subsection{Motivations}
In the Gaussian sequence model where $\by=\bmu+\bep$ is observed
for an unknown mean $\bmu\in\R^n$ and noise $\bep\sim N({\bf 0},\bI_n)$,
Stein's Unbiased Risk Estimate (SURE), proposed in \cite{stein1981estimation},
provides an unbiased estimator for the risk $\|\hbmu-\bmu\|^2$ of any 
{estimator $\hbmu(\by)$ with integrable gradient} as a consequence
of \eqref{stein-first-order-formula}. Namely, the decomposition
$$
\underbrace{\|\by-\hbmu\|^2+2\dv \hbmu -n}_{\SURE} - \|\hbmu-\bmu\|^2
= \bep^\top h(\bep) - \dv h(\bep)
$$
where $h(\bep) = \bep - 2(\hbmu-\bmu)$ shows that $\SURE$ defined above
is an unbiased estimate of $\E[\|\hbmu-\bmu\|^2]$ in virtue of
Stein's formula \eqref{stein-first-order-formula} applied to $h$.
Although the identity $\E[\SURE]=\E[\|\hbmu-\bmu\|^2]$ is well understood 
{and widely used},
little is known about the quality of the approximation $\SURE\approx\|\hbmu-\bmu\|^2$
in probability, or equivalently about the variance and concentration properties
of the random variable $\bep^\top h(\bep)-\dv h(\bep)$ above.
A first motivation of the present paper is to understand the stochastic behavior
of $\bep^\top h(\bep)-\dv h(\bep)$ (beyond unbiasedness) in order to provide
uncertainty quantification for SURE.
If multiple estimators $\{\hbmu^{(1)},\dots,\hbmu^{(m)}\}$ are given for estimation of $\bmu$,
it is natural to use the estimator among those with the {smallest} SURE.
However, statistical guarantees for such SURE-tuned estimate are lacking,
except in very special cases, for instance when $\{\hbmu^{(1)},\dots,\hbmu^{(m)}\}$
are linear functions of $\by$. When the estimators are linear, $\SURE$
is the same as Mallows' $C_p$ \cite{mallows1973some} and the SURE-tuned estimate
enjoys oracle inequalities with remainder term growing with $\log m$,
cf. \cite{bellec2014affine} or \Cref{sec:SURE-oracle-inequality} and the
references therein. We are not aware of oracle inequalities satisfied
by the SURE-tuned estimate for general
non-linear estimators $\{\hbmu^{(1)},\dots,\hbmu^{(m)}\}$; as explained in
\Cref{sec:SURE-oracle-inequality} below, our bounds
on the variance of $\bep^\top h(\bep) - \dv h(\bep)$ are helpful to fill
this gap.

A second motivation of the paper regards the
Lasso $\hbbeta$ in sparse linear regression with Gaussian noise $\bep\sim N({\bf 0},\bI_n)$.
Despite an extensive body of literature on the Lasso in the last two decades,
little is known about the stochastic behavior of the
size of the model selected by Lasso,
for instance no previous results are available on the variance of this integer
valued random variable.
It is well known that the model size of the lasso can be written as $\dv f(\bep)$
for certain function $f:\R^n\to\R^n$,
see \Cref{sec:sparsity} below and the references therein.
This integer valued random variable---the size of the model selected by the Lasso---is
not differentiable with respect to $\bep$ and the Stein formulae
\eqref{stein-first-order-formula}-\eqref{other-2nd-order-formula-hessian}
cannot be iterated further to understand the variance of this discrete random
variable.
Another avenue to bound the variance of a random variable of the form $g(\bz),\bz\sim N({\bf 0},\bI_n)$
is the Gaussian Poincar\'e inequality, which states that $\Var[g(\bz)]\le\E[\|\nabla g(\bz)\|^2]$
for differentiable $g:\R^n\to\R$ \cite[Theorem 3.20]{boucheron2013concentration}.
Again, this is unhelpful if $g(\cdot)$ is discrete-valued and
non-differentiable.

\subsection{Contributions}
The central result of the paper states
that the variance of the random variable $\bz^\top f(\bz) - \dv f(\bz)$
is exactly given by
$$\E\big[(\bz^\top f(\bz) - \dv f(\bz))^2\big] 
= \E\big[ \|f(\bz)\|^2 + \trace\big(\left(\nabla f(\bz)\right)^2\big)\big]
$$
{when both $f:\R^n\to\R^n$ and its derivative are square integrable.} 
Surprisingly, although this is obtained by iterated integration by parts,
the second derivatives do not appear.
As explained in the next section, the second derivatives of $f$ need not exist
for the previous display to hold---in that sense,
the previous display is more broadly applicable than
\eqref{other-2nd-order-formula-hessian}.
The above formula appears especially useful for estimators commonly called for
in high-dimensional statistics such as the Lasso, the Elastic-Net or the
Group-Lasso that are differentiable almost everywhere with respect to the
noise, but not twice-differentiable.
The contributions of the paper are summarized below.

\begin{itemize}
    \item We derive in \Cref{sec:second-order-stein-formula} the above Second Order Stein formula,
        an identity for the variance of the random variable
        $\bz^\top f(\bz)-\dv f(\bz)$.
    \item As a consequence,
        \Cref{sec:variance-divergence} provides an identity and an upper bound
        for {$\Var[\bz^\top f(\bz)-\dv f(\bz) - g(\bz)]$ in general, 
        including} $\Var[\dv f(\bz)]$. 
\end{itemize}
Statistical applications are then provided in \Cref{sec:3}:
\begin{itemize}
    \item
        \Cref{sec:sure-of-sure} leverages the Second Order Stein formula to construct an unbiased risk estimate 
        for Stein's Unbiased Risk Estimate (SURE) in the Gaussian sequence model. 
        We shall call this general method SURE for SURE. 
    \item
        \Cref{sec:confidence-region-SURE} describes confidence regions
        based on SURE and {the Second Order Stein formula.} 
   \item {\Cref{sec:SURE-oracle-inequality}} provides oracle inequalities
        for SURE-tuned estimates. 
    \item
        \Cref{sec:sparsity} provides new bounds on the variance of the size of the model selected by the Lasso
        in sparse linear regression.
    \item \Cref{sec:SOSforLassoEnet} provides SURE for SURE formulas for the Lasso and E-net 
    {as well as} the consistency of SURE for SURE in the Lasso case.
    \item \Cref{sec:debiasing} provides a scheme to de-bias a general class of estimators in linear regression
        where one wishes to estimate a low-dimensional projection of the unknown regression 
        coefficient vector.
    \item \Cref{sec:MonteCarlo} develops a Monte Carlo scheme to approximate 
        the divergence of a general differentiable estimator when the analytic form of the 
        divergence is unavailable. 
\end{itemize}

The above statistical applications are obtained as 
{consequences} 
of the Second Order Stein formula of \Cref{sec:second-order-stein-formula}.
The point of the Second Order Stein formula is that, in many statistical
applications where the random variable $\bz^\top f(\bz) - \dv f(\bz)$ appears,
the standard deviation $\Var[\bz^\top f(\bz) - \dv f(\bz)]^{1/2}$
is of smaller order than either $\dv f(\bz)$ or $\bz^\top f(\bz)$
and the approximation $\bz^\top f(\bz)\approx\dv f(\bz)$
holds not only in expectation as in \eqref{other-2nd-order-formula-hessian},
but also in probability.

\subsection{Notation}
\label{sec:notation}

Throughout the paper, $\|\cdot\|$ is the Euclidean norm 
and $\|\cdot\|_F$ the Frobenius norm, and $f:\R^n\to \R^m$ is L-Lipschitz if 
$\|f(\bu)-f(\bv)\|\le L\|\bu-\bv\|,\ \forall \bu,\bv$. 
For $f:\R^n\to \R^n$ with components $f_i$, denote 
by $\nabla f_i\in\R^n$ the gradient of each $f_i$, and by $\nabla f$
the matrix in $\R^{n\times n}$ with columns $\nabla f_1,...,\nabla f_n$.
The random variable $X$ is said to be in $L_1$ (respectively, $L_2$)
if $\E[|X|] <+\infty$ (resp., $\E[X^2]<+\infty$).
For two symmetric matrices $\bA,\bB$ of the same size, we write $\bA\preceq\bB$
if and only if $\bB-\bA$ is positive semi-definite. 

\section{Mathematical results}
\label{sec:2}

\subsection{A Second Order Stein formula}
\label{sec:second-order-stein-formula}

Stein's formula \eqref{stein-first-order-formula} states that the random variable
\begin{equation}
\bz^\top f(\bz) - \dv f(\bz)
\label{eq:centered-random-variable}
\end{equation}
is mean-zero.
The main result
of the current paper is the following \emph{Second Order Stein} formula,
which provides an identity for the variance of the random variable \eqref{eq:centered-random-variable} 
with $f$ in the Sobolev space $W^{1,2}(\gamma_n)$ with respect to the standard Gaussian measure $\gamma_n$ in $\R^n$. 
Let $C^\infty_0(\R^n)$ be the space of infinitely differentiable 
functions $\R^n\to\R$ with compact support.
The Sobolev space $W^{1,2}(\gamma_n)$ is defined as the completion of
the space $C^\infty_0(\R^n)$ with respect to the norm
\begin{equation}
    \label{norm-sobolev}
    \|g \|_{1,2} = \E[ g(\bz)^2]^{1/2} + 
    \E[\| \nabla g(\bz)\|^2]^{1/2}, \qquad g\in C^\infty_0(\R^n)
\end{equation}
where $\bz\sim N({\bf 0},\bI_n)$.
By \cite[Proposition 1.5.2]{bogachev1998gaussian}, the space
$W^{1,2}(\gamma_n)$ corresponds to locally integrable functions $g:\R^n\to\R$ that are
weakly differentiable and
$\E[g(\bz)^2] + \E[ \|\nabla g(\bz)\|^2]<+\infty$
for $\bz\sim N({\bf 0}, \bI_n)$. 
{This definition of the Sobolev space emphasizes the feasibility of approximating 
the functions in the space by infinitely smooth ones. 
Alternatively, $W^{1,2}(\gamma_n)$ can be defined as the completion of 
the space of Lipschitz functions under the same norm.} 
We refer to Section 1.5 in \cite{bogachev1998gaussian} for a complete description
of the space $W^{1,2}(\gamma_n)$.

\begin{theorem}
    \label{thm:second-order-stein}
    Let $\bz=(z_1,...,z_n)$ be a standard normal $N({\bf 0},\bI_n)$ random vector.
    Let $f_1,...,f_n$ be functions $\R^n\to\R$
    and $f$ be the column vector in $\R^n$ 
    with $i$-th component equal to $f_i$. Assume throughout that each $f_i$ is square integrable, 
    i.e. $\E[f_i(\bz)^2]< \infty$. 

    (i)
    Assume that each $f_i$ is twice continuously differentiable
    and that its first and second order derivatives have sub-exponential growth.
    Then
    \begin{align}
    \E\left[
        (\bz^\top f(\bz) - {\dv }f(\bz))^2
    \right]
    =
    \E \sum_{i=1}^n f_i^2(\bz) 
    + \E\sum_{i=1}^n \sum_{j=1}^n \frac{\partial f_i}{\partial x_j}(\bz) 
    \frac{\partial f_j}{\partial x_i}(\bz).
        \label{eq1}
    \end{align}
    An equivalent version of (\ref{eq1}) 
    for functions of $\by = N(\bmu,\sigma^2\bI)$, 
    in vector notation, is given in (\ref{th-1-sigma}) below.
    
    (ii) If $f:\R^n\to\R^n$ is $L$-Lipschitz for $L<+\infty$, then
    \eqref{eq1} holds.


    (iii) If each component $f_i$ of $f$ 
    belongs to 
    $W^{1,2}(\gamma_n)$, then \eqref{eq1} holds. 
\end{theorem}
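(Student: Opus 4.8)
The plan is to establish (i) directly by iterated Gaussian integration by parts, to upgrade it to (iii) by a density argument in $W^{1,2}(\gamma_n)$, and then to read off (ii) from (iii) once we check that Lipschitz maps have components in $W^{1,2}(\gamma_n)$. Write $\partial_j=\partial/\partial x_j$ for brevity.

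For (i) I would expand $(\bz^\top f-\dv f)^2=(\bz^\top f)^2-2(\bz^\top f)(\dv f)+(\dv f)^2$ and compute the expectation of each piece by applying the first order Stein formula \eqref{stein-first-order-formula} one coordinate at a time. The growth hypotheses ensure $f_i$ and every product of the $f_i$ with their first derivatives, together with the partials that arise, are integrable against the Gaussian measure, so \eqref{stein-first-order-formula} is legitimate at each step. Applying it in $z_i$ to $\E[z_i(z_jf_if_j)]$ and summing should give $\E[(\bz^\top f)^2]=\E\|f\|^2+\E[(\bz^\top f)(\dv f)]+\E\sum_{i,j}z_jf_i\,\partial_if_j$; a second application in $z_j$ turns the last sum into the first derivative term $\E\sum_{i,j}(\partial_jf_i)(\partial_if_j)$ plus a second derivative remainder $T:=\E\sum_{i,j}f_i\,\partial_i\partial_jf_j$. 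The same formula applied to $\E[(\bz^\top f)(\dv f)]$ should give $\E[(\dv f)^2]+T$, where symmetry of second partials (licensed by $C^2$) identifies the remainder as the same $T$. Substituting back, the two copies of $\E[(\dv f)^2]$ and the two copies of $T$ cancel, leaving exactly the right side of \eqref{eq1}. The noteworthy point is precisely that $T$ cancels, so no second derivatives survive.

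For (iii) I would approximate: pick $f_i^{(k)}\in C^\infty_0(\R^n)$ with $\|f_i^{(k)}-f_i\|_{1,2}\to0$, so each smooth compactly supported $f^{(k)}$ satisfies (i). The crux is to pass \eqref{eq1} to the limit, and the clean device is to apply the already proven \eqref{eq1} to the difference $f^{(k)}-f^{(\ell)}$: this equates $\E[(\bz^\top(f^{(k)}-f^{(\ell)})-\dv(f^{(k)}-f^{(\ell)}))^2]$ with $\E\|f^{(k)}-f^{(\ell)}\|^2+\E\,\trace\big((\nabla(f^{(k)}-f^{(\ell)}))^2\big)$, and since $|\trace(A^2)|\le\|A\|_F^2$ the right side is controlled by $\sum_i\|f_i^{(k)}-f_i^{(\ell)}\|_{1,2}^2\to0$. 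Hence $\bz^\top f^{(k)}-\dv f^{(k)}$ is Cauchy, so convergent, in $L_2$. I expect the main obstacle to be identifying this $L_2$ limit with $\bz^\top f-\dv f$: one cannot claim $\bz^\top f^{(k)}\to\bz^\top f$ in $L_2$, because the weight $z_i$ is not tamed by $L_2(\gamma_n)$ convergence of $f_i^{(k)}$. I would get around this by passing to a subsequence along which $f_i^{(k)}\to f_i$ and $\partial_if_i^{(k)}\to\partial_if_i$ almost everywhere, so that $\bz^\top f^{(k)}-\dv f^{(k)}\to\bz^\top f-\dv f$ pointwise a.e.; since the $L_2$ limit also agrees a.e. along a further subsequence, the two limits coincide. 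It then remains to send $k\to\infty$ in \eqref{eq1} for $f^{(k)}$, using the $L_2$ convergence on the left, $\E\|f^{(k)}\|^2\to\E\|f\|^2$, and $\E[\partial_jf_i^{(k)}\partial_if_j^{(k)}]\to\E[\partial_jf_i\,\partial_if_j]$ by bilinearity and Cauchy–Schwarz from $\partial_jf_i^{(k)}\to\partial_jf_i$ in $L_2(\gamma_n)$.

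Finally, for (ii) I would note that an $L$-Lipschitz $f$ has components with $|f_i(\bz)|\le|f_i(\mathbf 0)|+L\|\bz\|$, hence $f_i\in L_2(\gamma_n)$, and by Rademacher's theorem $f_i$ is differentiable a.e. with $\|\nabla f_i\|\le L$, so its weak gradient is square integrable. Thus each $f_i\in W^{1,2}(\gamma_n)$ and (ii) is immediate from (iii).
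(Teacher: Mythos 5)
Your proposal is correct and follows essentially the same route as the paper: iterated Gaussian integration by parts for (i) with the second-derivative terms cancelling by symmetry of mixed partials, a density argument in $W^{1,2}(\gamma_n)$ for (iii) in which the Cauchy property in $L_2$ comes from applying \eqref{eq1} to differences of approximants and the limit is identified via an a.e.\ convergent subsequence (the paper does this with Borel--Cantelli along a rapidly convergent subsequence), and Rademacher's theorem to reduce (ii) to (iii). The only cosmetic difference is that the paper organizes the computation in (i) around the componentized identity $\E[(z_if_i-\partial_if_i)g]=\E[f_i\,\partial_ig]$ rather than fully expanding the square, which is the same integration by parts.
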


We note that while (i) and (ii) may have broader appeal due to their simplicity, 
the assumption in (iii) above is the most general:
If $f$ satisfies either the assumption in (i) or (ii) 
then the components of $f$
are in $W^{1,2}(\gamma_n)$ and the assumption in (iii) holds.
The presentation above {also} highlights the proof strategy:
We first derive \eqref{eq1} under the smoothness assumption in (i)
and (ii)-(iii) then follow by an approximation argument.

Here is a proof of \Cref{thm:second-order-stein}(i). Extensions (ii) and (iii) are proved in \Cref{appendix:proof(ii)-(iii)}.
\begin{proof}[Proof of \Cref{thm:second-order-stein} (i)] 
When the functions $f_i$ and $(\partial/\partial x_j)f_i$ are treated as random variables,
their argument 
    is always $\bz$ through the proof, so we simply write $f_i$ for $f_i(\bz)$
    and similarly for the partial derivatives.
    A sum $\sum_i$ or $\sum_k$ always sums over $\{1,...,n\}$.

    Write the left hand side in \eqref{eq1} as
    \begin{equation}\nonumber
    \E\sum_i\left(z_i f_i -  \frac{\partial f_i}{\partial x_i} \right)
        \left(\sum_j z_j f_j - \sum_l \frac{\partial f_l}{\partial x_l}\right).
    \end{equation}
    By a first application of Stein's formula for each term $z_i$ {below},
    the identity
    \begin{equation}
        \E\left[\Big(z_if_i(\bz) - \frac{\partial f_i}{\partial x_i}(\bz)\Big) g(\bz)\right] 
    = \E\left[f_i(\bz)\frac{\partial g}{\partial x_i}(\bz)\right]
    \label{Stein-formula-applied-componentize-to-g}
    \end{equation}
    holds for any continuously 
    differentiable function $g:\R^n\to\R$ 
        such that the partial derivatives
        $(\partial/\partial x_i)g$ have 
    sub-exponential growth.
    Hence the left hand side in \eqref{eq1} equals
$$
\E \sum_i f_i^2 + \E \sum_i f_i
    \sum_j z_j \frac{\partial f_j}{\partial x_i} - \E \sum_i f_i \sum_l \frac{\partial^2 f_l}{\partial x_i \partial x_l}.
$$
We again apply Stein's formula to each $z_j$ in the second term above to obtain
$$
\E \sum_i f_i^2 
+ \E \sum_i \sum_j \frac{\partial f_i}{\partial x_j} \frac{\partial f_j}{\partial x_i}
+
\E \sum_i f_i \sum_j \frac{\partial^2 f_j}{\partial x_j \partial x_i}
- \E \sum_i f_i \sum_l \frac{\partial^2 f_l}{\partial x_i \partial x_l}.
$$
Since $f$ is twice continuously differentiable, by {the Schwarz theorem} 
on the symmetry of the second derivatives we have
$\sum_j (\pa/\pa x_j)(\pa/\pa x_i) f_j = \sum_\ell (\pa/\pa x_i)(\pa/\pa x_\ell) f_\ell$ 
and the proof of \eqref{eq1} is complete.
\end{proof}

The Second Order Stein formula \eqref{eq1} can then be rewritten as
\bel{SOS}&
\E\Big[(\bz^\top f(\bz) - \dv f(\bz))^2\Big] 
= \E\Big[ \|f(\bz)\|^2 + \trace\big(\left(\nabla f(\bz)\right)^2\big)\Big].
\eel
By the Cauchy-Schwarz inequality,
\begin{align}\label{eq1.5}
    \E\left[
        (\bz^\top f(\bz) - {\dv }f(\bz))^2
    \right]
    &\le
    \E \sum_{i=1}^n f_i^2(\bz)
    +
    \E \sum_{i=1}^n\sum_{j=1}^n \Big(\frac{\partial f_i}{\partial x_j}(\bz)\Big)^2,\\ \nonumber
    &=
    \E\Big[ \|f(\bz)\|^2 + \|\nabla f(\bz)\|_F^2\Big].
\end{align}
If $\nabla f(\bz)$ is almost surely symmetric, then
$\trace((\nabla f(\bz))^2) = \|\nabla f(\bz)\|_F^2$ 
and the above inequality is actually an equality.
However the inequality in (\ref{eq1.5}) is strict otherwise. 

If $\by = \bmu + \bep$ with $\bep\sim N({\bf 0},\sigma^2\bI_n)$ 
and $f:\R^n\to\R^n$
then
\begin{equation}
\label{th-1-sigma} 
    \E[(\bep^\top f(\by) - \sigma^2\dv f(\by))^2] =
    \sigma^2 \E[\|f(\by)\|^2] + \sigma^4\E[\trace\big(\left(\nabla f(\by)\right)^2\big)] 
\end{equation}
is obtained by setting $\bz=\bep/\sigma$ and applying \Cref{thm:second-order-stein} to 
$\tilde f(\bx)=\sigma f(\bmu+\sigma \bx)$, provided that $\tilde f$ 
satisfies the assumption of \Cref{thm:second-order-stein}~(iii).
Similarly if $\by=\bmu+\bep$ with $\bep\sim N({\bf 0},\bSigma)$ then
$${\E}\big[\{\bep^\top f(\by) 
- \trace(\bSigma\,\nabla f(\by))\}^2\big]
 = \E\big[\|\bSigma^{1/2} f(\by)\|^2\big] 
 + \E\trace\big[\{\bSigma\nabla f(\by)\}^2\big] 
$$
follows from \Cref{thm:second-order-stein} with $\bz = \bSigma^{-1/2}\bep$
and $\tilde f(\bx) = \bSigma^{1/2}f(\bmu+\bSigma^{1/2}\bx)$.

\Cref{thm:second-order-stein} is also applicable under the {central limit theorem.}  
Let $\bep_m = m^{-1/2}\sum_{i=1}^m \bx_i$ with iid $\bx_i\in \R^n$, 
$\E[\bx_i]={\bf 0}$ and $\E[\bx_i\bx_i^\top] = \bSigma$. 
If 
$\{\|f(\bep_m)\|^2, \|\nabla f(\bep_m)\|_F^2, (\bep_m^\top f(\bep_m))^2, m\ge 1\}$ is uniformly integrable
and $\nabla f$ is almost everywhere continuous, then 
$\E\big[\bep_m^\top f(\bep_m)\big]
= \E\big[\trace\big(\bSigma\,\nabla f(\bep_m)\big)\big] + o(1)$ and 
\bel{th-1-Sigma}
&& \E\Big[\Big(\bep_m^\top f(\bep_m) 
- \trace\big(\bSigma\,\nabla f(\bep_m)\big)\Big)^2\Big]
\\ \nonumber &=& \E\big[\big\|\bSigma^{1/2} f(\bep_m)\big\|^2\big] 
+ \E\big[\trace\big(\big(\bSigma\nabla f(\bep_m)\big)^2\big)\big] 
+ o(1)
\eel
as $m\to\infty$ for fixed $n$. 
If, for instance, $f:\R^n\to\R^n$ is $L$-Lipschitz with almost everywhere
continuous gradient
and $\E[\|\bx_i\|^{{4}}]\le C$ 
for some constant $C>0$ possibly depending on the
dimension, then
$\{\|\bep_m\|^4,m\ge 1\}$ is {uniformly integrable,}
which implies that
$\{\|f(\bep_m)\|^2, \|\nabla f(\bep_m)\|_F^2, (\bep_m^\top f(\bep_m))^2, m\ge 1\}$
is 
{also} uniformly integrable, so that \eqref{th-1-Sigma} holds
by, e.g.,
\cite[Theorem 1.11.3]{van1996weak}.

    

\subsection{Inner-product structure}
    The two sides of \eqref{SOS} are quadratic in $f$ and
    \eqref{SOS} is endowed with an inner-product struture.
Indeed, if $f,h:\R^n\to\R^n$ satisfy the assumption of \Cref{thm:second-order-stein}(iii),
so do $f+h$ and $f-h$. Appliying \eqref{SOS} to $f+h$ and $f-h$, taking the
difference and dividing by 4 {yield}
\bel{inner-product-structure}
     &&\E\Big[
         \big\{\bz^\top f(\bz) - \dv f(\bz)\big\}
         \big\{\bz^\top h(\bz) - \dv h(\bz)\big\}
     \Big] 
\cr
     &=& 
     \E\Big[ f(\bz)^\top h(\bz) + \trace\big(\nabla f(\bz)\nabla h(\bz)\big)\Big]
\eel
thanks to $\trace\big(\nabla f(\bz)\nabla h(\bz)\big)
=
\trace\big(\nabla h(\bz)\nabla f(\bz)\big)$. 

\subsection{Extensions and upper bounds on variance}
\label{sec:variance-divergence}
The Second Order Stein formula \eqref{eq1}
lets us derive the variance of more general random variables of the form 
\bel{eq:general-random-variable}
\bz^\top f(\bz) - \sigma^2\dv f(\bz) - g(\bz) 
\eel
and a related upper bound for the variance, 
where $\bz\sim N({\bf 0},\sigma^2\bI_n)$ and $f: \R^n\to\R^n$ and $g:\R^n\to\R$ satisfy 
the smoothness and integrability conditions of respective dimensions 
as in \Cref{thm:second-order-stein}. 
The formula for the variance of \eqref{eq:general-random-variable} can be viewed 
as an extension of \eqref{eq1} from $g=0$ to general $g$. 
As a special case, this provides a formula and an upper bound 
for the variance of the divergence with $g(\bx) = \bx^\top f(\bx)$. 

\begin{theorem}
    \label{thm:general-SOS}
    Let $\bz\sim N({\bf 0},\sigma^2\bI_n)$, 
    $f$ be as in \Cref{thm:second-order-stein} and $g\in W^{1,2}(\gamma_n)$. 
    Then the variance of the random variable in \eqref{eq:general-random-variable} satisfies
    \bel{variance-general} 
    && \Var\big(\bz^\top f(\bz) - \sigma^2\dv f(\bz) - g(\bz)\big)
    \cr &=& \E\big[\sigma^2\|f(\bz) - \nabla g(\bz)\|^2+\sigma^4\trace\big(\big(\nabla f(\bz)\big)^2\big)\big] 
    + \tilde V(g)
    \\ &\le& \E\big[\sigma^2\|f(\bz) - \nabla g(\bz)\|^2+\sigma^4\trace\big(\big(\nabla f(\bz)\big)^2\big)\big]. 
    \label{variance-ineq-general}
    \eel
    with $\tilde V(g) =\Var(g(\bz))-\sigma^2\E[\|\nabla g(\bz)\|^2]$.
In particular, with $g(\bx) = \bx^\top f(\bx)$, 
	\bel{prop:variance-divergence}
        \Var[\dv f(\bz)] 
        & = &\E[\trace\left(( \nabla f(\bz))^2\right)] + \Var[ \bz^\top f(\bz)/\sigma^2]
        \cr && - \E[ \|f(\bz)/\sigma\|^2] - 2 \E[ f(\bz)^\top {(\nabla f(\bz))} \bz/\sigma^2]
        \cr &\le& \E[\trace\left(( \nabla f(\bz))^2\right)] + \E[\|{(\nabla f(\bz))} \bz/\sigma\|^2]. 
    \eel
\end{theorem}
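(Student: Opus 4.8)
The plan is to expand the variance into three pieces, evaluate each with a Stein-type identity, and then reorganize. Write $A := \bz^\top f(\bz) - \sigma^2\dv f(\bz)$, which is mean-zero by the first-order Stein formula \eqref{stein-first-order-formula} rescaled to $N(\mathbf 0,\sigma^2\bI_n)$, so that
$$\Var\big(\bz^\top f(\bz) - \sigma^2\dv f(\bz) - g(\bz)\big) = \E[A^2] + \Var\big(g(\bz)\big) - 2\Cov\big(A, g(\bz)\big).$$
The task then reduces to identifying the first and third terms, since $\Var(A)=\E[A^2]$ and $\Cov(A,g(\bz))=\E[A\,g(\bz)]$ both follow from $\E[A]=0$.

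For $\E[A^2]$ I would invoke the Second Order Stein identity in the form \eqref{th-1-sigma} with $\bmu=\mathbf 0$, which gives $\E[A^2] = \sigma^2\E[\|f(\bz)\|^2] + \sigma^4\E[\trace((\nabla f(\bz))^2)]$. For the cross term, the componentwise identity \eqref{Stein-formula-applied-componentize-to-g}, rescaled to variance $\sigma^2$, reads $\E[(z_i f_i - \sigma^2 \partial_i f_i)\,g] = \sigma^2 \E[f_i\,\partial_i g]$; summing over $i$ yields $\Cov(A, g(\bz)) = \sigma^2\E[f(\bz)^\top \nabla g(\bz)]$. Substituting both into the decomposition and completing the square via $\sigma^2\E[\|f\|^2] - 2\sigma^2\E[f^\top\nabla g] = \sigma^2\E[\|f - \nabla g\|^2] - \sigma^2\E[\|\nabla g\|^2]$ produces exactly \eqref{variance-general}, with the residual $\tilde V(g) = \Var(g(\bz)) - \sigma^2\E[\|\nabla g(\bz)\|^2]$. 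The inequality \eqref{variance-ineq-general} is then immediate from the Gaussian Poincar\'e inequality $\Var(g(\bz))\le\sigma^2\E[\|\nabla g(\bz)\|^2]$, i.e.\ $\tilde V(g)\le 0$.

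For the special case $g(\bx)=\bx^\top f(\bx)$ the key simplification is that $\bz^\top f(\bz)-g(\bz)=0$, so the random variable in \eqref{eq:general-random-variable} collapses to $-\sigma^2\dv f(\bz)$ and its variance equals $\sigma^4\Var[\dv f(\bz)]$. Using the paper's convention that $\nabla f$ has columns $\nabla f_i$, a direct computation gives $\nabla g = f + (\nabla f)\bz$, hence $f-\nabla g = -(\nabla f)\bz$. Plugging this into \eqref{variance-general} and expanding $\tilde V(\bz^\top f) = \Var(\bz^\top f) - \sigma^2\E\|f+(\nabla f)\bz\|^2$, the $\sigma^2\E[\|(\nabla f)\bz\|^2]$ contribution from $\sigma^2\E\|f-\nabla g\|^2$ cancels the matching term hidden inside $\tilde V(\bz^\top f)$; dividing by $\sigma^4$ and collecting the survivors yields \eqref{prop:variance-divergence}, and the accompanying upper bound follows the same way from \eqref{variance-ineq-general}.

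The main obstacle is not the algebra but justifying the integration-by-parts steps under the stated hypotheses: the componentwise identity \eqref{Stein-formula-applied-componentize-to-g} was derived for smooth $g$ with sub-exponentially growing derivatives, whereas here $g\in W^{1,2}(\gamma_n)$ and $f$ need only satisfy the assumption of \Cref{thm:second-order-stein}. I would discharge this by the same density argument used for \Cref{thm:second-order-stein}(iii): approximate both $f$ and $g$ in the $W^{1,2}(\gamma_n)$ norm by functions in $C^\infty_0(\R^n)$, verify that $A$, $g(\bz)$, $\nabla g(\bz)$, and $f(\bz)^\top\nabla g(\bz)$ converge in the relevant $L_1$/$L_2$ senses so that $\E[A^2]$, $\Cov(A,g(\bz))$, and $\Var(g(\bz))$ all pass to the limit, thereby extending the identity from the smooth case to the full class.
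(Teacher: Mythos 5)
Your proposal is correct and follows essentially the same route as the paper: decompose $\Var(A-g)$ using $\E[A]=0$, evaluate $\E[A^2]$ by the Second Order Stein formula and the cross term by the componentwise identity \eqref{Stein-formula-applied-componentize-to-g}, complete the square to obtain \eqref{variance-general}, invoke Gaussian Poincar\'e for \eqref{variance-ineq-general}, and compute $\nabla g = f + (\nabla f)\bz$ for the divergence case. Your extra care in discharging the integration-by-parts steps for $g\in W^{1,2}(\gamma_n)$ via the density argument is a point the paper leaves implicit, but it does not change the substance of the proof.
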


\begin{proof} Assume without loss of generality $\sigma=1$. 
{By \eqref{Stein-formula-applied-componentize-to-g},  
\bes
\E\Big[\big(\bz^\top f(\bz) - \dv f(\bz)\big)g(\bz)\Big] = \E\Big[f(\bz)^\top\nabla g(\bz)\Big], 
\ees
so that \eqref{variance-general} follows from \Cref{thm:second-order-stein} and some algebra. 
The} Gaussian Poincar\'e inequality \cite[3.20]{boucheron2013concentration} applied to $g$ reads 
$\tilde V(g)\le 0$ 
which gives \eqref{variance-ineq-general}.
Finally, for $g(\bx) = \bx^\top f(\bx)$, we have $(\partial g/\partial
    x_i)(\bz) = f_i(\bz) + \sum_{j=1}^n z_j (\partial f_j/\partial x_i)(\bz)$
    so that $\nabla g(\bz) = f(\bz) + {(\nabla f(\bz))} \bz$. 
\end{proof}

As we have mentioned earlier, \eqref{variance-general} becomes \eqref{th-1-sigma} when $g=0$. 
The extension in \Cref{thm:general-SOS} 
is particularly useful in our investigation of the variance of SURE and 
other quantities of interest.  
A striking feature of {the upper bound in \eqref{prop:variance-divergence}} 
is that the variance of the random variable
$\dv f(\bz)$, defined using the first order derivatives of $f$,
can be bounded from above using only the first order partial derivatives of $f$.
In particular, the second partial derivatives of $f$ may be arbitrarily large or may not exist.
This feature will be used in the next section to study the variance of the size
of the model selected by the Lasso
in linear regression, which takes the form $\dv f$ for a certain function $f$.

It can be seen from the {definition of $\tilde V(g)$} 
that the inequality \eqref{variance-ineq-general} involves 
a single application of the Gaussian Poincar\'e inequality {to $g(\bz)$.} Thus, it holds with 
equality if and only if $g(\bx)$ is linear.

Finally, we obtain the following by applying \Cref{prop:variance-divergence}
to $g(\bx)=\bx^\top\{f(\bx)-\E[\nabla f(\bz)]\bx\}$: 
\bel{variance-divergence-2}
     &&   \Var[\dv f(\bz)]
   \\&\le&
        \E[\trace\left(( \nabla f(\bz)-\E[\nabla f(\bz)])^2\right)]
        + \E[\|\{\nabla f(\bz)-\E[\nabla f(\bz)] \} \bz/\sigma\|^2].
        \nonumber
\eel
This recovers $\Var[\dv f(\bz)]=0$ when $f(\bz)$ is linear in $\bz$
and may be useful in other cases where $\nabla f(\bz)-\E[\nabla f(\bz)]$ is
small. 

\subsection{Beyond Gaussian distributions}
        Although the next section provides applications
        of \eqref{SOS} {and \eqref{variance-ineq-general}} 
        in the Gaussian case only, we briefly
        mention here extensions to non-Gaussian $\bz$
    when $\bz$ has density $\bx\to\exp(-{\psi}(\bx))$ where ${\psi}:\R^n\to \R$ is
    twice continuously differentiable with Hessian $H(\bx) = \nabla^2 {\psi}(\bx)$.
    Throughout, 
    let $h,f:\R^n\to\R^n$ be smooth vector fields
    with compact support and $g:\R^n\to\R$ be smooth.
    By integration by parts,
    \begin{equation*}
        \int \left\{\nabla {\psi}(\bz)^\top h(\bz) - \dv h(\bz) \right\} g(\bz) e^{-{\psi}(\bz)}\, d\bz
        = \int h(\bz)^\top \nabla g(\bz) e^{-{\psi}(\bz)}\, d\bz.
    \end{equation*} 
    Applying this identity twice, first to $h=f$ and $g(\bx) = \nabla {\psi}(\bx)^\top f(\bx) - \dv f(\bx)$, second to $h(\bx)=\nabla f(\bx)^\top f(\bz)$ and $g=1$ gives
    \bel{SOS-generalization-non-gaussian}
        && 
        \E\big[\{\nabla {\psi}(\bz)^\top f(\bz) - \dv f(\bz) \}^2\big]
          \cr&= &
          \E\big[
              f(\bz)^\top\{H(\bz)\} f(\bz)
            + \trace\{\nabla f(\bz)^2\}
          \big]
    \eel
    which extends \eqref{eq1} to non-Gaussian distributions.
    Similarly to \Cref{prop:variance-divergence}, if ${\psi}$ is additionally strictly
    convex then
    \bel{variance-general-non-gaussian} 
             && \Var\big(\nabla {\psi}(\bz)^\top f(\bz) - \dv f(\bz) - g(\bz)\big)
      \\ \nonumber &=& \E\big[\|H(\bz)^{1/2}f(\bz) - H(\bz)^{-1/2}  \nabla g(\bz)\}\|^2+\trace\big(\big(\nabla f(\bz)\big)^2\big)\big] 
    + \tilde V(g)
    \eel
    where $\tilde V(g) =\Var(g(\bz))-\E[\|H(\bz)^{-1/2}\nabla g(\bz)\|^2]$
    and $\tilde V(g)\le 0$ follows from the Brasecamp-Lieb inequality
    \cite[Theorem 4.9.1]{bakry2013analysis}.
    In the special case where $f=\nabla u$ for some smooth $u:\R^n\to\R$,
    identity \eqref{SOS-generalization-non-gaussian} is related to the Bochner formula
    used in the analysis of diffusion operators \cite[Section 1.16.1]{bakry2013analysis},
    which gives hope to extend \eqref{eq1} and its applications
    to probability measures defined on non-Euclidean manifolds---although this goal
    lies outside of the scope of the present paper. 

\section{Statistical applications}
\label{sec:3}
\subsection{SURE for SURE}
\label{sec:sure-of-sure}
In the Gaussian sequence model, one observes $\by = \bmu + \bep$ 
where the noise $\bep\sim N({\bf 0},\bI_n)$
is standard normal and $\bmu$ is an unknown mean.
Given an estimator $\hbmu(\by)$
where $\hbmu:\R^n\to\R^n$ is some known almost differentiable function 
with $\nabla \hbmu(\by)$ in $L_1$, SURE
provides an unbiased estimate of the mean squared risk $\E \|\hbmu - \bmu\|^2$ given by
\bel{def-SURE}
    \SURE
    =
    \|\hbmu-\by\|^2
    + 2 \dv \hbmu(\by)
    - n.
\eel
The fact that this quantity is an unbiased estimate of $\E \|\hbmu - \bmu\|^2$
is a consequence of the identity
\[
    \|\hbmu - \bmu\|^2 
    =
        \|\hbmu-\by\|^2 + 2 \bep^\top (\hbmu-\bmu) - \|\bep\|^2
\]
with $\E[\|\bep\|^2] = n$
and Stein's formula \eqref{stein-first-order-formula} which asserts that $\E[\bep^\top \hbmu(\by)] = \E[\dv \hbmu(\by)]$
whenever all partial derivatives of $\hbmu$ are in $L_1$.
The random variable $\dv \hbmu(\by)$ can be computed from the observed data since it only involves 
$\by$ as well as the partial derivatives of $\hbmu$.
The quantity 
    $\df = \dv \hbmu(\by)$
is an estimator sometimes 
referred to as the empirical degrees of freedom
of the estimator $\hbmu$. 
In this subsection, we develop Second Order Stein methods to evaluate the accuracy of SURE. 

\subsubsection{SURE for SURE in general}
\label{subsec:SURE4SURE-general}

We define the mean squared risk of the scalar estimator $\SURE$ by
\bel{risk-of-SURE}
    R_\tSURE =
    \E\Big[\Big(\SURE - \|\hbmu - \bmu\|^2\Big)^2\Big].
\eel
This means we treat $\SURE$ as an estimate of the squared prediction error 
$\|\hbmu-\bmu\|^2$ as well. This is reasonable as the actual squared loss $\|\hbmu - \bmu\|^2$ 
is often a more relevant target than its expectation. 
One may also wish to treat $\SURE$ as an estimate of the deterministic 
$\E[\|\hbmu - \bmu\|^2]$ 
and consider the estimation of 
$\Var(\SURE)$.
{Thanks} to \Cref{thm:general-SOS}, we will develop in the next subsection methodologies for 
the consistent estimation of upper bounds for $\Var(\SURE)$ 
and $R_{\tSURE}$ under proper conditions on the gradient of $\hbmu$.

SURE is widely used in practice to estimate 
$\|\hbmu - \bmu\|^2$ or $\E \|\hbmu - \bmu\|^2$
either because it is of interest to estimate the prediction error of $\hbmu$,
or because several estimators of the mean vector $\bmu$
are available and the statistician hopes to
use the $\SURE$ 
{to compare} them on equal footing.
Although $\SURE$ provides an unbiased estimate of 
the loss $\|\hbmu - \bmu\|^2$ and its expectation,
such estimate may end up being not so useful, 
or provide spurious estimates, 
if the quantity \eqref{risk-of-SURE} is too large.
For estimators of interest where $\SURE$ is used in practice, it is important
to understand the risk of $\SURE$ given by \eqref{risk-of-SURE}
in order to provide some uncertainty quantification about the 
{accuracy} 
of $\SURE$.
For instance, one should expect $\SURE$ to be successful if
${R_\tSURE^{1/2}}$ is negligible compared to $\SURE$, i.e.,  ${R_\tSURE^{1/2}} \lll \SURE$.
On the other hand, if ${R_\tSURE^{1/2}} \ggg \SURE$ then we would expect that
estimates from $\SURE$
would be spurious with strictly positive probability and $\SURE$ should not be trusted. 
Under the square integrability condition on the first and second partial derivatives of $f(\by)$, 
\cite{stein1981estimation} proposed an unbiased estimate of the risk (\ref{risk-of-SURE}). 
However, the twice differentiability condition typically fails to hold 
for estimators involving less smooth regularizers such as the Lasso.
\cite{donoho1995adapting} studied 
the performance of SURE optimized separable threshold estimator (SureShrink) and thus 
the accuracy of SURE in this special case. 
\cite{dossal2013degrees} derives an identity for the quantity \eqref{risk-of-SURE}
in the special case of the Lasso. 
In a general study of SURE tuned estimators, 
\cite{tibshirani2018excess} developed a correction for the excess optimism 
with the nominal SURE in such schemes. 
Section 5 in \cite{javanmard2018debiasing}
establishes consistency of SURE for the Lasso 
when the design matrix has iid $N(0,1)$ entries and 
the tuning parameter is large enough.
The Second Order Stein identity 
$$\E\big[\big\{\|\bz\|^2-n + \gamma(\bz)\big\}^2\big] 
=\E\big[ 2p + 2\Delta \gamma(\bz) + \gamma(\bz)^2\big],$$ 
where $\Delta = \sum_{i=1}^n(\pa/\pa x_i)^2$ is the Laplacian, was used in \cite{johnstone1988inadmissibility} to prove the inadmissibility of SURE 
for the estimation of the squared loss of the James-Stein estimator when $n\ge 5$.

The following result, which extends Theorem 3 of \cite{stein1981estimation} to allow application 
to the Lasso and other estimators only one-time differentiable, 
computes the expectation of the quantity \eqref{risk-of-SURE}
as well as an unbiased estimator of it directly through \Cref{thm:second-order-stein}. 

\begin{theorem}
    \label{thm:sure-for-sure}
    Let $\bep\sim N({\bf 0},\bI_n)$ and $\by = \bmu + \bep$.
    Let $\hbmu$ be an estimator of $\bmu$ with $\hbmu:\R^n\to\R^n$
    satisfying the assumptions of \Cref{thm:second-order-stein},
    and define $\SURE$ by \eqref{def-SURE}.
    Then
    \begin{align}\nonumber
      &  \E\left[
            \left(\SURE - \|\hbmu - \bmu\|^2\right)^2
        \right]
    \\  \label{2.4}
    & =
    \E\left[
        4 \|\by - \hbmu(\by)\|^2
        + 4 \trace((\nabla \hbmu(\by) )^2)
        -2n
    \right].
    \end{align}
    Consequently, SURE for SURE
    \bel{SURE-for-SURE}
    {\Rhat_\tSURE} =  4 \|\by-\hbmu(\by)\|^2 + 4 \trace( (\nabla \hbmu(\by))^2 ) - 2n
    \eel
    is an unbiased estimate of the risk of $\SURE$ in \eqref{risk-of-SURE}.
\end{theorem}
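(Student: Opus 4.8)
The plan is to reduce the whole statement to a single application of the Second Order Stein formula \eqref{eq1} to the function already flagged in the Motivations section. Writing $\by=\bmu+\bep$ and regarding every quantity as a function of $\bep$ through $\hbmu(\by)=\hbmu(\bmu+\bep)$, I set
\[
h(\bep)=\bep-2\big(\hbmu(\by)-\bmu\big).
\]
The first step is the exact algebraic identity
\[
\SURE-\|\hbmu-\bmu\|^2=\bep^\top h(\bep)-\dv h(\bep),
\]
which is purely elementary: expanding $\|\hbmu-\by\|^2=\|\hbmu-\bmu\|^2-2\bep^\top(\hbmu-\bmu)+\|\bep\|^2$ in \eqref{def-SURE}, and using $\dv_\bep h=n-2\,\dv\hbmu(\by)$ (since $\partial/\partial\ep_i=\partial/\partial y_i$ as $\bmu$ is fixed), one checks both sides equal $2\,\dv\hbmu(\by)-n-2\bep^\top(\hbmu-\bmu)+\|\bep\|^2$.

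Next I would verify that $h$ meets the hypotheses of \Cref{thm:second-order-stein}. Because $\hbmu$, as a function of $\bep$, is assumed to satisfy them, and $h$ differs from $-2\hbmu$ only by the affine map $\bep\mapsto\bep+2\bmu$ — whose components are square integrable against $\gamma_n$ with constant (hence square integrable) gradient — each $h_i$ lies in $W^{1,2}(\gamma_n)$ by closure under addition of affine terms. Applying \eqref{SOS} to $h$ then converts \eqref{risk-of-SURE} into
\[
R_\tSURE=\E\big[\|h(\bep)\|^2+\trace\big((\nabla h(\bep))^2\big)\big].
\]

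It then remains to evaluate the two expectations. For the trace term, the Jacobian relation $\nabla h=\bI_n-2\nabla\hbmu$ together with linearity and the cyclic property of the trace gives $\trace((\nabla h)^2)=n-4\,\dv\hbmu(\by)+4\,\trace((\nabla\hbmu)^2)$, with no symmetry of $\nabla\hbmu$ needed (note $\trace((A^\top)^2)=\trace(A^2)$, so the paper's column convention is harmless). For the norm term, I expand $\|h\|^2=\|\bep\|^2-4\bep^\top(\hbmu-\bmu)+4\|\hbmu-\bmu\|^2$ and take expectations, invoking $\E\|\bep\|^2=n$, the first order Stein identity \eqref{stein-first-order-formula} in the form $\E[\bep^\top(\hbmu-\bmu)]=\E[\dv\hbmu]$, and the unbiasedness $\E\|\hbmu-\bmu\|^2=\E[\SURE]=\E[\|\by-\hbmu\|^2+2\,\dv\hbmu-n]$; this yields $\E\|h\|^2=-3n+4\E[\dv\hbmu]+4\E\|\by-\hbmu\|^2$.

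Adding the two expectations, the $\dv\hbmu(\by)$ contributions cancel exactly and the constants combine to $-2n$, producing \eqref{2.4}; the unbiasedness of $\Rhat_\tSURE$ in \eqref{SURE-for-SURE} is then immediate, since \eqref{2.4} displays $R_\tSURE$ as the expectation of the right-hand side of \eqref{SURE-for-SURE}. The conceptual core — recognizing $\SURE-\|\hbmu-\bmu\|^2$ as a centered Stein variable $\bep^\top h-\dv h$ — is already in hand, so the remaining difficulty is bookkeeping rather than ideas: one must (a) confirm $h\in W^{1,2}(\gamma_n)$ so that \Cref{thm:second-order-stein}(iii) applies even when $\hbmu$ is merely once differentiable (the Lasso/Elastic-Net regime), and (b) track the cancellation of the divergence terms carefully, as this cancellation is precisely what makes $\Rhat_\tSURE$ depend on $\hbmu$ only through $\|\by-\hbmu\|^2$ and $\trace((\nabla\hbmu)^2)$, and not through $\dv\hbmu$.
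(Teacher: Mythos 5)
Your proof is correct and follows essentially the same route as the paper's: both recognize $\SURE-\|\hbmu-\bmu\|^2$ as the centered Stein variable $\bep^\top h(\bep)-\dv h(\bep)$ with $h(\bep)=\bep-2(\hbmu-\bmu)$ (the paper works with $-h$, written as $2(\hbmu-\by)+\bep$, which changes nothing after squaring), apply the Second Order Stein formula \eqref{eq1}, and then use the first-order Stein formula once to make the $\dv\hbmu$ terms cancel. Your bookkeeping of $\E\|h\|^2$ via the unbiasedness $\E\|\hbmu-\bmu\|^2=\E[\SURE]$ is a slightly longer but equivalent rearrangement of the paper's final cancellation step.
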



\begin{remark} With the notation $f(\by)=\hbmu-\by$ used in \cite{stein1981estimation},
    $\nabla f(\by) = \nabla \hbmu(\by) - \bI_n$ and
SURE for SURE is also given by 
\bel{SFS-alternative}
{\Rhat_\tSURE}  =  2n
        + 4 \|f(\by)\|^2
        + 4 \trace((\nabla f(\by) )^2)
        + 8 \dv f(\by). 
\eel
\end{remark} 

\begin{proof}
    Write $\dv \hbmu$ for $\dv \hbmu(\by)$ and $\hbmu$ for $\hbmu(\by)$.
    By simple algebra
    \bes
        \left(\|\hbmu - \bmu\|^2 - \SURE \right)^2
        &=&
        \big(
            \bep^\top{\{2(\hbmu-\by) + \bep\}}
            +
            (n - 2\dv \hbmu)
        \big)^2
        .
    \ees
    By \eqref{th-1-sigma} applied to
        $\by\to 2\{\hbmu(\by)-\by\} {+} (\by-\bmu)$ 
        we obtain in expectation 
    \bel{SURE-for-SURE-calculation}
    R_{\tSURE}&=&\E[\|2\{\hbmu-\by\}+\bep\|^2+\trace\{(2\nabla \hbmu - \bI_n)^2\}]
      \cr&=&
      \E[4\|\hbmu-\by\|^2+4\trace\{(\nabla \hbmu)^2\}-2n]
      \\&& + \E[4\bep^\top(\hbmu-\by)+4(n-\dv \hbmu)].
      \nonumber
    \eel
    The proof of \eqref{2.4} is complete as the last line
    is 0 in virtue of Stein's formula. 
    Equality \eqref{SFS-alternative} is obtained by observing that
    $f(\by) = \hbmu(\by) - \by$, hence
    \bes
      &  \nabla f(\by) = \nabla \hbmu - \bI_n,
    \quad
    \dv f(\by) = \trace[\nabla \hbmu] - n, \\
&    4 \trace[(\nabla f(\by))^2] + 8 \dv f(\by)
    = 4 \trace[(\nabla \hbmu(\by))^2]
    - 4n
    .
    \ees
\end{proof}

\begin{remark}
    In the Gaussian sequence model where the noise $\bep$ has distribution $N({\bf 0},\sigma^2 \bI_n)$ with $\sigma\ne 1$,
    the estimator $\SURE$ has the form
    \begin{align}\label{SURE-sigma}
    \SURE 
    &= \|\by - \hbmu\|^2 + 2 \sigma^2 \dv \hbmu - \sigma^2 n.
    \end{align}
    Theorem \ref{thm:sure-for-sure} 
     implies that in this setting, 
    SURE for SURE is 
    \bel{SURE4SURE}
    {\Rhat_\tSURE} = 4\sigma^2 \|\by - \hbmu\|^2
        + 4\sigma^4 \trace[ (\nabla \hbmu)^2]
        - 2 \sigma^4n, 
    \eel
    as its expectation is identical to 
    ${R_\tSURE =} \E\big[\big(\SURE - \|\hbmu - \bmu\|^2\big)^2\big]$. 
\end{remark}

\subsubsection{Error bounds and consistency}
\label{subsec:SURE4SURE-Lipschitz}
In the spirit of SURE, SURE for SURE in \eqref{SURE4SURE} 
is fundamentally a point estimator for the risk of SURE as defined in \eqref{risk-of-SURE}. 
It could be the starting point for the construction of an interval estimator 
or simply provide some measure of the performance of SURE when nothing else 
or better is available. 
    Because of its availability in broad settings, \eqref{SURE4SURE} is not expected to always yield 
    sensible and theoretically justifiable interval estimates. 
    Still, we provide here consistent estimation of upper bounds for both the risk 
    \eqref{risk-of-SURE} and $\Var(\SURE)$ under proper conditions on $\hbmu$. 
    Furthermore, when the gradient is a random orthogonal projection as in 
    Lasso, isotonic regression and many other cases, the theorem below proves the 
    consistency of SURE for SURE in \eqref{SURE4SURE}. Define 
    \bel{SURE4SURE-prime} 
    \Rhat'_{\tSURE} = 2\sigma^2\big(\|\by-\hbmu\|^2+\SURE\big),
    \eel
    which satisfies $\Rhat'_{\tSURE} - \Rhat_{\tSURE} 
    = 4\sigma^4\{\dv\hbmu - \trace\big((\nabla\hbmu)^2\big)\}$ compared with SURE for SURE in 
    \eqref{SURE4SURE}. 
In particular,
$\Rhat'_{\tSURE}=\Rhat_{\tSURE}$ if $\nabla
\hbmu$ is a random orthogonal projection. 
    
\begin{theorem}\label{th-consistency}
Let {$\SURE$,} $R_{\tSURE}$, $\Rhat_{\tSURE}$ and $\Rhat'_{\tSURE}$ be as 
in {\eqref{SURE-sigma},} \eqref{risk-of-SURE}, \eqref{SURE4SURE} and \eqref{SURE4SURE-prime}
respectively and assume $\by=\bmu+\bep$ with $\bep\sim N({\bf 0},\sigma^2\bI_n)$. 
\begin{enumerate}
    \item 
If $\E\sum_{i=1}^n|(\nabla\hbmu)_{ii}|<+\infty$, then $\E[\Rhat'_{\tSURE}] \ge \sigma^4 n$.
\item
If the function $\by\to \hbmu(\by)$ is 1-Lipschitz, then 
    \bel{QUARTIC-RISK-SQRT-SURE}
    \E\Big[\Big({(\SURE)_+}^{1/2} - \E[ \|\hbmu - \bmu\|^2]^{1/2}\Big)^4\Big]^{1/4}
    &\le &
    R_{\tSURE}^{1/4} + 3\sigma. 
    \eel
\item
If the function $\by\to \hbmu(\by)$ is 1-Lipschitz and 
$\nabla \hbmu$ is almost everywhere symmetric positive semi-definite, then 
$\Rhat_{\tSURE}\le \Rhat'_{\tSURE}$ and 
\bel{var-SURE}
\quad
\Var(\SURE) = \E\Big[\big(\SURE - \E\big[\|\hbmu-\bmu\|^2\big]\big)^2\Big] \le R_{\tSURE} + \sigma^4 n. 
\eel 
\item
If $\Rhat'_{\tSURE} \ge \Rhat_{\tSURE}$ and 
$\max_{\|\bu\|=1}|\bu^\top(\bI_n-\nabla\hbmu)\bu|\le 1$ 
almost surely, then 
\bel{consistency} 
\E\bigg[\bigg(\frac{\Rhat'_{\tSURE}}{\E[\Rhat'_{\tSURE}]}-1\bigg)^2\bigg] 
\le \frac{16\sigma^4}{\E[\Rhat'_{\tSURE}]}  
\le \frac{16}{n}. 
\eel
\end{enumerate}
\end{theorem}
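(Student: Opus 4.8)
The plan is to handle the four parts separately; in each case I reduce to a first- or second-order Stein identity followed by an elementary matrix / completion-of-squares estimate. Throughout write $r=\|\hbmu-\bmu\|^2$ and $A=\nabla\hbmu$, $\bw=\hbmu-\bmu$, $\br=\by-\hbmu$, so that $\bep=\br+\bw$.

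\textbf{Part (1).} First I would expand $\E[\Rhat'_{\tSURE}]$ using $\Rhat'_{\tSURE}=4\sigma^2\|\by-\hbmu\|^2+4\sigma^4\dv\hbmu-2\sigma^4 n$ (from \eqref{SURE4SURE-prime} and the stated $\Rhat'_{\tSURE}-\Rhat_{\tSURE}$), and apply Stein's first-order formula $\E[\bep^\top\bw]=\sigma^2\E[\dv\hbmu]$ (legitimate under $\E\sum_i|(\nabla\hbmu)_{ii}|<\infty$) to rewrite $\E\|\br\|^2=\sigma^2 n-2\sigma^2\E[\dv\hbmu]+\E r$. This gives $\E[\Rhat'_{\tSURE}]=2\sigma^4 n-4\sigma^4\E[\dv\hbmu]+4\sigma^2\E r$, so that $\E[\Rhat'_{\tSURE}]\ge\sigma^4 n$ is equivalent to $\sigma^2 n-4\sigma^2\E[\dv\hbmu]+4\E r\ge0$. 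I close this by Cauchy--Schwarz: $\sigma^4(\E[\dv\hbmu])^2=(\E[\bep^\top\bw])^2\le\sigma^2 n\,\E r$, hence $\E r\ge\sigma^2(\E[\dv\hbmu])^2/n$, and substituting makes the left-hand side the complete square $\sigma^2(\sqrt n-2\E[\dv\hbmu]/\sqrt n)^2\ge0$.

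\textbf{Part (2).} I would work in $L^4$ and split $\|(\SURE)_+^{1/2}-(\E r)^{1/2}\|_4\le\|(\SURE)_+^{1/2}-r^{1/2}\|_4+\|r^{1/2}-(\E r)^{1/2}\|_4$. For the first term, $|\sqrt a-\sqrt b|\le\sqrt{|a-b|}$ together with $|(\SURE)_+-r|\le|\SURE-r|$ (valid since $r\ge0$) gives $\|(\SURE)_+^{1/2}-r^{1/2}\|_4^4\le\E(\SURE-r)^2=R_{\tSURE}$, i.e. this term is at most $R_{\tSURE}^{1/4}$. For the second term I use that $X=\|\hbmu-\bmu\|$ is a $1$-Lipschitz function of $\bep\sim N(\mathbf 0,\sigma^2\bI_n)$, so Gaussian concentration makes it $\sigma$-sub-Gaussian about $\E X$, whence $\|X-\E X\|_4\le(16\sigma^4)^{1/4}=2\sigma$; combined with $0\le(\E X^2)^{1/2}-\E X\le(\Var X)^{1/2}\le\sigma$ (Gaussian Poincar\'e) this yields $\|r^{1/2}-(\E r)^{1/2}\|_4\le3\sigma$, and the two bounds add to $R_{\tSURE}^{1/4}+3\sigma$.

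\textbf{Part (3).} The inequality $\Rhat_{\tSURE}\le\Rhat'_{\tSURE}$ is immediate from $\Rhat'_{\tSURE}-\Rhat_{\tSURE}=4\sigma^4\{\dv\hbmu-\trace(A^2)\}$: under symmetry and $0\preceq A\preceq\bI_n$ the eigenvalues $\lambda_i$ of $A$ lie in $[0,1]$, so $\sum_i(\lambda_i-\lambda_i^2)\ge0$. For the variance bound I start from $\SURE-r=\bep^\top h-\sigma^2\dv h$ with $h=\bep-2\bw$ and $\E\SURE=\E r$, giving $\Var(\SURE)=R_{\tSURE}+2\Cov(\SURE-r,r)+\Var(r)$; the inner-product form \eqref{Stein-formula-applied-componentize-to-g} of Stein's identity gives $\Cov(\SURE-r,r)=\sigma^2\E[h^\top\nabla r]$ with $\nabla r=2A\bw$. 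Bounding $\Var(r)\le\sigma^2\E\|\nabla r\|^2$ by Gaussian Poincar\'e and completing the square, $2\sigma^2\E[h^\top\nabla r]+\sigma^2\E\|\nabla r\|^2=\sigma^2\E\|h+\nabla r\|^2-\sigma^2\E\|h\|^2$, so \eqref{var-SURE} reduces to $4\sigma^2\E\big[\bep^\top A\bw-\bw^\top(2A-A^2)\bw\big]\le\sigma^4 n$, i.e. (using $\bep=\br+\bw$) to $4\sigma^2\E\big[\br^\top A\bw-\bw^\top A(\bI_n-A)\bw\big]\le\sigma^4 n$. I expect this last step to be the main obstacle: the potentially large contributions of $\E r$ and $\E\|\br\|^2$ must cancel exactly, leaving only $\sigma^4 n$, whereas naive bounds (Cauchy--Schwarz or dropping $\bw^\top A(\bI_n-A)\bw\ge0$) are of order $\E r$ and a direct Stein integration by parts of $\E[\bep^\top A\bw]$ would introduce second derivatives of $\hbmu$ that are absent under the Lipschitz hypothesis. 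I would instead invoke the exact identity \eqref{variance-general} with $f=2\br$ and $g=\|\bep\|^2-r$, whose negative Poincar\'e deficit $\tilde V(g)$ supplies precisely the missing cancellation, using the firm-nonexpansiveness relation $A-A^2\succeq0$ throughout; a mollification of $\hbmu$ would make the second-order bookkeeping rigorous.

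\textbf{Part (4).} Here the cleanest route rewrites $\Rhat'_{\tSURE}$ through the field $f=\by-\hbmu$: since $\dv\hbmu=n-\dv f$ one gets $\Rhat'_{\tSURE}=4\sigma^2 Q+2\sigma^4 n$ with $Q=\|f\|^2-\sigma^2\dv f$, so $\Var(\Rhat'_{\tSURE})=16\sigma^4\Var(Q)$. Observing that $Q=\bep^\top f-\sigma^2\dv f-g$ with $g=\br^\top\bw$, I apply \eqref{variance-general} and discard $\tilde V(g)\le0$ to get $\Var(Q)\le\E[\sigma^2\|f-\nabla g\|^2+\sigma^4\trace((\nabla f)^2)]$. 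A short computation gives $\nabla g=(\bI_n-A)\bw+A\br$, hence $f-\nabla g=(\bI_n-A)(\br-\bw)=(\bI_n-A)h$ with $h=\bep-2\bw$, while $\nabla f=\bI_n-A$; the hypothesis $\max_{\|u\|=1}|u^\top(\bI_n-A)u|\le1$ (read as $(\bI_n-A)^2\preceq\bI_n$ in the symmetric case of the motivating examples) yields $\|f-\nabla g\|^2\le\|h\|^2$ and $\trace((\bI_n-A)^2)\le n$, so $\Var(Q)\le\sigma^2\E\|h\|^2+\sigma^4 n$. The decisive point is that this upper bound equals $\E[\Rhat'_{\tSURE}]$ exactly, since both reduce via Stein to $2\sigma^4 n-4\sigma^4\E[\dv\hbmu]+4\sigma^2\E r$; therefore $\Var(\Rhat'_{\tSURE})\le16\sigma^4\E[\Rhat'_{\tSURE}]$, and dividing by $(\E[\Rhat'_{\tSURE}])^2$ gives the first bound in \eqref{consistency}, while $16\sigma^4/\E[\Rhat'_{\tSURE}]\le16/n$ is exactly Part (1). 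The only subtlety I anticipate is the passage from the numerical-radius hypothesis to the operator-norm control of $(\bI_n-A)h$, which is automatic when $\nabla\hbmu$ is symmetric.
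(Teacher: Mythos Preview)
Your approaches to Parts (i), (ii) and (iv) are correct and essentially coincide with the paper's. For (i) the paper is a touch simpler: since $\E[\SURE]=\E[\|\hbmu-\bmu\|^2]$, one has $\E[\Rhat'_{\tSURE}]=2\sigma^2\E[\|\by-\hbmu\|^2+\|\hbmu-\bmu\|^2]\ge\sigma^2\E\|\bep\|^2=\sigma^4n$ by the elementary $\|a\|^2+\|b\|^2\ge\tfrac12\|a+b\|^2$; your Cauchy--Schwarz route reaches the same conclusion. For (iv) you reproduce the paper's argument and are right to flag that the step $\|(\bI_n-A)h\|^2\le\|h\|^2$ uses the operator norm of $\bI_n-A$, which the numerical-radius hypothesis only delivers when $\nabla\hbmu$ is symmetric; the paper's applications all fall in that case.

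Your Part (iii), however, has a genuine gap. You correctly reduce \eqref{var-SURE} to
\[
4\sigma^2\E\bigl[\br^\top A\bw-\bw^\top A(\bI_n-A)\bw\bigr]\le\sigma^4 n,
\]
but then describe this as ``the main obstacle'' and propose a vague re-application of \eqref{variance-general} with $f=2\br$, $g=\|\bep\|^2-r$. That detour is unnecessary and not obviously workable. The inequality follows in one line from the polarization identity $4\br^\top A\bw=(\br+\bw)^\top A(\br+\bw)-(\br-\bw)^\top A(\br-\bw)=\bep^\top A\bep-(\bep-2\bw)^\top A(\bep-2\bw)$. Under $0\preceq A\preceq\bI_n$ (which also gives $A(\bI_n-A)\succeq0$), drop the two nonpositive terms and bound $\bep^\top A\bep\le\|\bep\|^2$:
\[
4\sigma^2\E\bigl[\br^\top A\bw-\bw^\top A(\bI_n-A)\bw\bigr]\le\sigma^2\E[\bep^\top A\bep]\le\sigma^2\E\|\bep\|^2=\sigma^4 n.
\]
This is exactly how the paper closes (iii); no second derivatives, no mollification, and no appeal to the Poincar\'e deficit are needed beyond what you already used.
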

\begin{remark}
    \label{remark:1-lipschitz-positive-symmetric-penalized-estimators}
    If $\hbmu(\by)=\bX\hbbeta(\by)$ for some fixed $\bX\in\R^{n\times p}$
    and a penalized estimator
    $\hbbeta(\by)=\argmin_{\bb\in\R^p}\{\|\by-\bX\bb\|^2/2 + g(\bb)\}$
    for convex $g:\R^p\to\R$, then $\hbmu$ is 1-Lipschitz
    and $\nabla \hbmu(\by)$ is almost everywhere symmetric
    positive semi-definite. 
    This follows from arguments in
    \cite{bellec2016bounds,bellec2019second_order_poincare} as explained
    at the end of \Cref{sec:appendix-proof-risk-of-sqrt-SURE}.
    Hence for any convex regularized least-squares estimate $\hbbeta(\by)$,
    $\hbmu(\by)=\bX\hbbeta(\by)$ satisfies 
    the conditions in (ii)-(iv) above as well as 
    $\Rhat_{\tSURE}\le\Rhat_{\tSURE}'$ almost surely.
\end{remark}

    If the estimation of the deterministic quantity
    $\E[ \|\hbmu - \bmu\|^2]$ is essential, inequality \eqref{QUARTIC-RISK-SQRT-SURE} 
    asserts that up to an additive absolute constant,
    $R_{\tSURE}$ bounds from above the quartic risk of $\SURE{}^{1/2}$
    when the estimation target is $\E[ \|\hbmu - \bmu\|^2]^{1/2}$.
{If in addition the gradient is almost everywhere symmetric positive semi-definite 
as in \Cref{remark:1-lipschitz-positive-symmetric-penalized-estimators}, 
inequality \eqref{var-SURE} asserts that the risk $\Var(\SURE)$ is bounded from above by 
$R_{\tSURE} + \sigma^4 n\le 2\E[\Rhat'_{\tSURE}]$, with a slight modification $\Rhat'_{\tSURE}$ 
of the SURE for SURE. Moreover, under milder conditions in \Cref{th-consistency} (iv) which 
do not require the symmetry of the random matrix $\nabla\hbmu$, 
$\Rhat'_{\tSURE}$ is a consistent estimator of its expectation. 

When the gradient $\nabla\hbmu$ is a random orthogonal projection, 
$\Rhat'_{\tSURE}$ is identical to $\Rhat_{\tSURE}$, 
so that SURE for SURE is a consistent estimator of its risk $R_{\tSURE}$ 
and $R_{\tSURE}+\sigma^4n \le 2 R_{\tSURE}$ are upper bounds for the risk $\Var(\SURE)$. 
In the lasso case, $\nabla\hbmu$ is an orthogonal projection, cf.
\Cref{sure-lasso} below.

The proof of Theorem \ref{th-consistency}
is given in \Cref{sec:appendix-proof-risk-of-sqrt-SURE}. 
In fact, under the conditions for \eqref{consistency}, we prove the sharper  
\bel{SURE4SURE-var-bd}
\Var\big(\Rhat'_{\tSURE}\big) 
\le 16 \sigma^4\E\big[\Rhat''_{\tSURE}\big] 
\eel
with $\Rhat''_{\tSURE}=(3/4)\Rhat'_{\tSURE} + (1/4)\Rhat_{\tSURE} - \sigma^4\dv\hbmu 
\le \Rhat'_{\tSURE}$. 
This suggests the use of $16\sigma^4\Rhat''_{\tSURE}$ 
or $16\sigma^4\Rhat'_{\tSURE}$ 
as estimated upper bounds for $\Var\big(\Rhat'_{\tSURE}\big)$. 
}

\subsubsection{Difference of two estimators}
\label{subsec:SURE4SURE-diff}

As we have briefly discussed above the statement of Theorem \ref{thm:sure-for-sure}, 
SURE is often used to optimize among different estimators. 
Consider for simplicity the comparison between two estimates 
$\hbmu^{(1)}$ and $\hbmu^{(2)}$ of $\bmu$. In this setting, 
\bel{SFS-diff} 
R_{\tSURE}^{\tdiff} = \E\bigg[\Big(\big\|\hbmu^{(1)}-\bmu\big\|^2 
- \big\|\hbmu^{(2)}-\bmu\big\|^2 - \SURE^{\tdiff}\Big)^2\bigg]
\eel
is the proper risk for SURE, 
{where} 
\bel{SURE-diff}
\SURE^{\tdiff} &=& \SURE^{(1)} - \SURE^{(2)}  
\cr &=& \big\|\hbmu^{(1)}-\by\big\|^2 - \big\|\hbmu^{(2)}-\by\big\|^2 
+ 2\dv\big(\hbmu^{(1)} - \hbmu^{(2)}\big)
\eel
is the difference in SURE between $\hbmu^{(1)}$ and $\hbmu^{(2)}$.  
When the loss $\|\hbmu - \bmu\|^2$ is of smaller order than $n^{1/2}$, 
SURE may produce a spurious estimator due to the estimation of $\|\bep\|^2$ by $n$ 
in (\ref{def-SURE}). However, due to the cancellation of this common chi-square type error, 
the risk of the estimator (\ref{SURE-diff}) could be of smaller order than the risk of SURE 
for both $\hbmu^{(j)}$.
Parallel to Theorem~\ref{thm:sure-for-sure}, the Second Order Stein 
formula leads to the following. 

\begin{theorem}
    \label{thm:sure-for-sure-diff}
    Let $\bep\sim N({\bf 0},\bI_n)$, $\by = \bmu + \bep$, and 
    {$\hbmu^{(1)}$ and $\hbmu^{(2)}$} be estimates of $\bmu$ based on $\by$. 
    Let $\SURE^{\tdiff}$ 
    and $R_{\tSURE}^{\tdiff}$ be as in {(\ref{SURE-diff}) and (\ref{SFS-diff}) respectively} 
    and $f(\by) = \hbmu^{(1)} - \hbmu^{(2)}$.  
    Suppose $f:\R^n\to\R^n$ satisfies the assumptions of \Cref{thm:second-order-stein}. 
    Then, 
    \bel{th-2-2-1}
    R_{\tSURE}^{\tdiff} 
    = \E\Big[ 4 \|f(\by)\|^2 + 4 \trace\Big( \big(\nabla f(\by)\big)^2\Big) \Big]. 
    \eel
    Consequently, SURE for SURE, given by 
    \bel{th-2-2-2}
    	\Rhat_{\tSURE}^{\tdiff} 
	=  4 \|f(\by)\|^2 + 4 \trace\Big( \big(\nabla f(\by)\big)^2\Big), 
    \eel
    is an unbiased estimate of the risk of $\ \SURE^{\tdiff}$ in \eqref{SFS-diff}.
\end{theorem}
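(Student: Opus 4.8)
The plan is to reduce the computation of $R_{\tSURE}^{\tdiff}$ directly to the Second Order Stein formula \eqref{th-1-sigma}, by showing that the centered random variable inside \eqref{SFS-diff} collapses to exactly twice the canonical Stein object $\bep^\top f(\by) - \dv f(\by)$, where $f(\by)=\hbmu^{(1)}-\hbmu^{(2)}$. Once this reduction is in place, \eqref{th-1-sigma} with $\sigma=1$ yields \eqref{th-2-2-1} at once, and the unbiasedness of $\Rhat_{\tSURE}^{\tdiff}$ in \eqref{th-2-2-2} follows because $R_{\tSURE}^{\tdiff}=\E[\Rhat_{\tSURE}^{\tdiff}]$.

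The key algebraic step is to expand each squared loss about $\by$. Writing $\hbmu^{(j)}-\bmu=(\hbmu^{(j)}-\by)+\bep$, I would obtain
\[
\|\hbmu^{(j)}-\bmu\|^2 = \|\hbmu^{(j)}-\by\|^2 + 2\bep^\top(\hbmu^{(j)}-\by) + \|\bep\|^2
\]
and then subtract the two instances $j=1,2$. The decisive feature is that the $\|\bep\|^2$ terms cancel in the difference---this is precisely the chi-square cancellation alluded to before the theorem statement---leaving
\[
\|\hbmu^{(1)}-\bmu\|^2-\|\hbmu^{(2)}-\bmu\|^2 = \|\hbmu^{(1)}-\by\|^2-\|\hbmu^{(2)}-\by\|^2 + 2\bep^\top f(\by).
\]
Subtracting $\SURE^{\tdiff}$ as defined in \eqref{SURE-diff}, the two $\|\hbmu^{(j)}-\by\|^2$ terms cancel against their counterparts in $\SURE^{\tdiff}$, and the divergence contribution $-2\dv f(\by)$ is produced, so that the centered quantity reduces exactly to $2\{\bep^\top f(\by)-\dv f(\by)\}$.

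It then remains to square, take expectation, and pull out the factor $4$, giving $R_{\tSURE}^{\tdiff} = 4\,\E[(\bep^\top f(\by)-\dv f(\by))^2]$; applying \eqref{th-1-sigma} with $\sigma=1$ to $f$ produces $\E[(\bep^\top f(\by)-\dv f(\by))^2]=\E[\|f(\by)\|^2+\trace((\nabla f(\by))^2)]$, which is \eqref{th-2-2-1}. The only point requiring care---and the closest thing to an obstacle here, though a mild one---is that $f=\hbmu^{(1)}-\hbmu^{(2)}$ must genuinely fall under \Cref{thm:second-order-stein}; this is exactly the hypothesis imposed in the statement, and since the class governing \eqref{th-1-sigma} is stable under the shift $\bx\mapsto\bmu+\bx$ and under taking differences of its members, the identity applies verbatim. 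The unbiasedness claim in \eqref{th-2-2-2} is then immediate.
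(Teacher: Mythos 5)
Your proposal is correct and follows essentially the same route as the paper: the paper's proof likewise reduces $\|\hbmu^{(1)}-\bmu\|^2-\|\hbmu^{(2)}-\bmu\|^2-\SURE^{\tdiff}$ by algebra to $2\bep^\top f(\by)-2\dv f(\by)$ and then invokes the Second Order Stein formula. You merely spell out the intermediate expansion (and the cancellation of the $\|\bep\|^2$ terms) that the paper compresses into the phrase ``by algebra.''
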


\begin{proof} By algebra, 
\bes
\big\|\hbmu^{(1)}-\bmu\big\|^2 - \big\|\hbmu^{(2)}-\bmu\big\|^2 - \SURE^{\tdiff} 
= 2 \bep^{\top}f(\by) - 2\dv f(\by). 
\ees
The conclusion follows directly from \Cref{thm:second-order-stein}. 
\end{proof}

\subsection{Confidence region based on $\SURE$}
\label{sec:confidence-region-SURE}
{While SURE for SURE 
provides an unbiased point estimator for the (mean) squared
difference between $\SURE$ and the squared loss $\|\hbmu - \bmu\|^2$, we may also use 
the Second Order Stein formula to derive interval estimates for 
$\|\hbmu - \bmu\|^2$ {and $\E\big[\|\hbmu - \bmu\|^2\big]$} based on $\SURE$. 
As we are not compelled to directly use the ${\Rhat_\tSURE}$ in \eqref{SURE4SURE} 
to construct such interval estimates, 
we present the following simpler approach. 

\begin{theorem}
    \label{thm:sure-for-sure-tail}
    Let $\by$, $\bmu$, $\hbmu = \hbmu(\by)$ and $\SURE$ be as in 
    \eqref{SURE-sigma}. Then, 
    \bel{th-2-2-1a}
    && \E\Big[\big(\SURE - \|\bmu - \hbmu\|^2 - \|\bep\|^2 + \sigma^2 n\big)^2\Big]
    \\ \nonumber &=& 4\sigma^2 \E\big[ \|\hbmu - \bmu\|^2\big] 
    + 4\sigma^4\E\big[\trace((\nabla\hbmu)^2)]. 
    \eel
    If the right-hand side of \eqref{th-2-2-1a} is bounded by $\sigma^4v_0^22 n\eps_n$ 
    with a constant $v_0$, then
    \bel{th-2-2-1b}&
    \P\Big\{\big|\SURE - \|\bmu - \hbmu\|^2\big| \le \sigma^2 (v_{\alpha} + v_0)\sqrt{2n}\Big\} 
    \ge 1 - \alpha - \eps_n 
    \eel
    for all $\alpha\in (0,1)$, where $v_{\alpha}$ is defined by 
    $\P\{ (2n)^{-1/2}|\chi^2_n - n| > v_\alpha\} = \alpha$, and 
    \bel{th-2-2-1c}&
    \P\Big\{\|\bmu - \hbmu\|^2 \le \SURE + \sigma^2 (v_{-,\alpha} + v_0)\sqrt{2n}\Big\} 
    \ge 1 - \alpha - \eps_n, 
    \eel
    where $v_{-,\alpha}$ is defined by 
    $\P\{ (2n)^{-1/2}(n-\chi^2_n) > v_{-,\alpha}\} = \alpha$. 
    \end{theorem}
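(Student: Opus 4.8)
The plan is to reduce both the identity and the two tail bounds to the vector-form Second Order Stein formula \eqref{th-1-sigma}, applied to the centered map $f(\by) = \hbmu(\by) - \bmu$, combined with the fact that $\|\bep\|^2$ has an exactly known (scaled) chi-square law. Since $\bmu$ is a fixed vector, $\nabla f = \nabla \hbmu$ and $\dv f = \dv \hbmu$, so the hypotheses placed on $\hbmu$ carry over verbatim to $f$ and \eqref{th-1-sigma} applies.

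For the identity \eqref{th-2-2-1a} the first step is purely algebraic. Expanding $\|\by - \hbmu\|^2 = \|\bep - (\hbmu - \bmu)\|^2 = \|\hbmu - \bmu\|^2 - 2\bep^\top(\hbmu - \bmu) + \|\bep\|^2$ and substituting into the definition \eqref{SURE-sigma} of $\SURE$, the terms $\|\hbmu - \bmu\|^2$, $\|\bep\|^2$ and $\sigma^2 n$ cancel and I expect to arrive at
\[
\SURE - \|\bmu - \hbmu\|^2 - \|\bep\|^2 + \sigma^2 n = -\big(2\bep^\top(\hbmu - \bmu) - 2\sigma^2 \dv \hbmu\big).
\]
Squaring and taking expectations, the right-hand side becomes $\E[(2\bep^\top f - 2\sigma^2 \dv f)^2]$, which by \eqref{th-1-sigma} applied to $f$ equals $4\sigma^2\E[\|\hbmu - \bmu\|^2] + 4\sigma^4\E[\trace((\nabla\hbmu)^2)]$, i.e.\ \eqref{th-2-2-1a}.

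For the two confidence statements I would carry the displayed identity one step further into the decomposition
\[
\SURE - \|\bmu - \hbmu\|^2 = \sigma^2(\chi^2_n - n) - W, \qquad W := 2\bep^\top(\hbmu - \bmu) - 2\sigma^2 \dv \hbmu,
\]
where $\|\bep\|^2 = \sigma^2\chi^2_n$ with $\chi^2_n$ a chi-square variable on $n$ degrees of freedom. The term $W$ is controlled in second moment: by part (i), $\E[W^2]$ equals the right-hand side of \eqref{th-2-2-1a}, which is at most $2n\sigma^4 v_0^2\eps_n$ by hypothesis, so Chebyshev's inequality gives $\P(|W| > \sigma^2 v_0\sqrt{2n}) \le \eps_n$. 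The chi-square term is handled exactly: by the definitions of $v_\alpha$ and $v_{-,\alpha}$ one has $\P(|\sigma^2(\chi^2_n - n)| > \sigma^2 v_\alpha\sqrt{2n}) = \alpha$ and $\P(\sigma^2(n - \chi^2_n) > \sigma^2 v_{-,\alpha}\sqrt{2n}) = \alpha$. A union bound over the two events together with the triangle inequality $|\SURE - \|\bmu - \hbmu\|^2| \le |\sigma^2(\chi^2_n - n)| + |W|$ then yields the two-sided bound \eqref{th-2-2-1b} on an event of probability at least $1 - \alpha - \eps_n$; for the one-sided bound \eqref{th-2-2-1c} I would instead bound $\|\bmu - \hbmu\|^2 - \SURE = \sigma^2(n - \chi^2_n) + W$ from above, using the one-sided chi-square quantile $v_{-,\alpha}$ together with $W \le \sigma^2 v_0\sqrt{2n}$ on the good event.

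I do not anticipate a serious obstacle: the whole argument is a clean separation between a part with an exactly known distribution, namely the chi-square error $\|\bep\|^2 - \sigma^2 n$ calibrated by the exact quantiles $v_\alpha$ and $v_{-,\alpha}$, and the Stein cross-term $W$, whose second moment is pinned down exactly by the Second Order Stein formula and then bounded crudely by Chebyshev. The only points needing care are the sign bookkeeping in the algebraic identity of part (i), and the observation that no independence between $\chi^2_n - n$ and $W$ is required, since a union bound suffices. It is precisely the passage from an exact distribution to a mere second-moment control on $W$ that forces the coarse contribution $v_0\sqrt{2n}$ and the standing hypothesis that the right-hand side of \eqref{th-2-2-1a} be of order $n\eps_n$.
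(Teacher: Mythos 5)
Your proposal is correct and follows essentially the same route as the paper: the same algebraic decomposition of $\SURE - \|\bmu-\hbmu\|^2$ into the exact chi-square term $\|\bep\|^2-\sigma^2 n$ plus the Stein cross-term $2\bep^\top(\hbmu-\bmu)-2\sigma^2\dv\hbmu$, the Second Order Stein identity \eqref{th-1-sigma} for \eqref{th-2-2-1a}, and then Chebyshev on the cross-term combined with a union bound against the chi-square quantiles $v_\alpha$, $v_{-,\alpha}$. The sign bookkeeping and the reading of the hypothesis as $\E[W^2]\le 2n\sigma^4 v_0^2\eps_n$ both check out.
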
 
While the left-hand side of \eqref{th-2-2-1a} is quartic in $\|\hbmu-\bmu\|$, 
the right-hand side is quadratic. Thus, $\SURE$ provides an accurate estimate of 
$\|\bmu - \hbmu\|^2$ when the squared error is of greater order than 
$|\|\bep\|^2 - \sigma^2 n| \approx \sigma^2(2n)^{1/2}|N(0,1)|$, provided that 
the second term on the right-hand side of \eqref{th-2-2-1a} is of no greater 
order than $\max\big\{\sigma^4n,\sigma^2 \E\big[ \|\hbmu - \bmu\|^2\big]\big\}$.   
Specifically, in such scenarios, \eqref{th-2-2-1b} implies that $\SURE$ is within 
a small fraction of $\|\bmu - \hbmu\|^2$ when $\sqrt{\|\bmu - \hbmu\|^2/n}$ is of 
greater order than $\sigma n^{-1/4}$, and \eqref{th-2-2-1b} and \eqref{th-2-2-1c} 
provide confidence regions for the entire vector $\bmu$. 
As $\sigma n^{-1/4}$ is known to be a lower bound for the error in the 
estimation of the average loss in the estimation of $\bmu$   
\cite{li1989honest,nickl2013confidence}, 
\eqref{th-2-2-1b} implies the rate optimality of the upper
bound \eqref{th-2-2-1c} for the squared estimation error $\|\bmu - \hbmu\|^2$ and thus the rate
optimality of the resulting confidence region for $\bmu$.

\begin{corollary}
    \label{corollary:exact-quantile-confidence-region}
    If for some sequence $(\gamma_n)$ with $\gamma_n\to 0$
    \begin{equation}
        \label{eq:assum-gamma_n-to-0}
        \E[4\|\hbmu-\bmu\|^2/(n\sigma^2) + 4\trace\{(\nabla \hbmu)^2\})/n]
        \le \gamma_n,
    \end{equation}
    then for all fixed $\alpha\in(0,1)$ independent of $n$, 
\bes
\P\Big\{\big|\SURE - \|\bmu - \hbmu\|^2\big| \le \sigma^2 (v_{\alpha} )\sqrt{2n}\Big\} 
 &\to& (1 - \alpha),
\\
\P\Big\{\|\bmu - \hbmu\|^2 \le \SURE + \sigma^2 (v_{-,\alpha})\sqrt{2n}\Big\} 
&\to & (1 - \alpha)
\ees
where $v_\alpha,v_{-,\alpha}$ are defined in \Cref{thm:sure-for-sure-tail}.
\end{corollary}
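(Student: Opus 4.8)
The plan is to read the corollary as the distributional sharpening of Theorem~\ref{thm:sure-for-sure-tail}: the slack terms $v_0$ and $\eps_n$ appearing in \eqref{th-2-2-1b}--\eqref{th-2-2-1c} are eliminated by invoking the central limit theorem for the chi-square fluctuation $\|\bep\|^2-\sigma^2n$, while the exact variance identity \eqref{th-2-2-1a} is used only to certify that the remaining error is asymptotically negligible. Concretely, I set
$$D_n = \SURE - \|\bmu-\hbmu\|^2 - \big(\|\bep\|^2 - \sigma^2 n\big),$$
so that the left-hand side of \eqref{th-2-2-1a} is exactly $\E[D_n^2]$. Dividing the right-hand side of \eqref{th-2-2-1a} by $\sigma^4 n$ reproduces verbatim the left-hand side of the hypothesis \eqref{eq:assum-gamma_n-to-0}, whence $\E[D_n^2]\le \sigma^4 n\,\gamma_n$ and therefore $\E\big[(D_n/(\sigma^2\sqrt{2n}))^2\big]\le \gamma_n/2\to 0$. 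In particular $D_n/(\sigma^2\sqrt{2n})\convp 0$.

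Next I normalise the quantity of interest. Writing $X_n = (\SURE-\|\bmu-\hbmu\|^2)/(\sigma^2\sqrt{2n})$, the decomposition above reads
$$X_n = \frac{D_n}{\sigma^2\sqrt{2n}} + \frac{\|\bep\|^2-\sigma^2 n}{\sigma^2\sqrt{2n}}.$$
Since $\|\bep\|^2/\sigma^2\sim\chi^2_n$ is a sum of $n$ i.i.d.\ $\chi^2_1$ variables, the classical central limit theorem gives $(\|\bep\|^2-\sigma^2n)/(\sigma^2\sqrt{2n})=(\chi^2_n-n)/\sqrt{2n}\convd N(0,1)$; combined with $D_n/(\sigma^2\sqrt{2n})\convp 0$ from the previous step, Slutsky's theorem yields $X_n\convd N(0,1)$. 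I then record that the finite-sample critical values converge to Gaussian quantiles: by construction $v_\alpha$ is the exact $(1-\alpha)$-quantile of $|(\chi^2_n-n)/\sqrt{2n}|$, which converges weakly to $|N(0,1)|$, so because the latter has a continuous and strictly increasing distribution function $v_\alpha\to\Phi^{-1}(1-\alpha/2)$, and likewise $v_{-,\alpha}\to\Phi^{-1}(1-\alpha)$.

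It remains to convert the weak limit $X_n\convd N(0,1)$, together with $v_\alpha\to\Phi^{-1}(1-\alpha/2)$, into convergence of the two coverage probabilities. For the two-sided event this reads $\P\{|X_n|\le v_\alpha\}\to\Phi(\Phi^{-1}(1-\alpha/2))-\Phi(-\Phi^{-1}(1-\alpha/2))=1-\alpha$, and after rewriting $\{\|\bmu-\hbmu\|^2\le \SURE+\sigma^2 v_{-,\alpha}\sqrt{2n}\}$ as $\{X_n\ge -v_{-,\alpha}\}$ the one-sided event gives $\P\{X_n\ge -v_{-,\alpha}\}=1-\P\{X_n< -v_{-,\alpha}\}\to 1-\Phi(-\Phi^{-1}(1-\alpha))=1-\alpha$. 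The one delicate point—and the only place where genuine care is needed—is that $\SURE$, and hence $X_n$, may carry atoms (for estimators such as the Lasso the term $\dv\hbmu$ is integer-valued), so $X_n$ is not a continuous random variable, and moreover the thresholds $v_\alpha,v_{-,\alpha}$ themselves drift with $n$. I expect this to be the main technical obstacle, and I would dispose of it by exploiting the continuity of the Gaussian limit: convergence in distribution to a law with continuous distribution function upgrades, via P\'olya's uniform convergence theorem, to $\P\{X_n\le t_n\}\to\Phi(t)$ and $\P\{X_n< t_n\}\to\Phi(t)$ for every deterministic sequence $t_n\to t$. Applying this with $t_n\in\{v_\alpha,-v_\alpha,-v_{-,\alpha}\}$ renders both the atoms and the $n$-dependence of the critical values harmless and produces the two displayed limits.
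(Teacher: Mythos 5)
Your proof is correct and follows essentially the same route as the paper, which simply invokes \Cref{thm:sure-for-sure-tail} with $v_0^2=2\gamma_n^{1/2}$, $\eps_n=\gamma_n^{1/2}$ together with the continuous mapping theorem: both arguments rest on the identity \eqref{th-2-2-1a} to make the non-chi-square remainder negligible in $L_2$, and on weak convergence of $(\chi^2_n-n)/\sqrt{2n}$ to $N(0,1)$. Your write-up merely makes explicit the Slutsky/P\'olya steps (and the convergence of $v_\alpha$, $v_{-,\alpha}$ to Gaussian quantiles) that the paper leaves implicit in the phrase ``continuous mapping theorem.''
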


{Similar to \Cref{thm:sure-for-sure-tail}, 
\Cref{corollary:exact-quantile-confidence-region} provides} 
    confidence regions for the entire vector $\bmu$ with exact
asymptotic quantiles.
Under the condition \eqref{eq:assum-gamma_n-to-0},
$\SURE$ incurs an error characterized by the quantiles of the
random variable $\|\bep\|^2-\sigma^2n$.

We will verify in 
Theorems~\ref{thm:variance-size-model-lasso} and \ref{thm:upperbound-expected-sparsity-RE} 
that the condition \eqref{eq:assum-gamma_n-to-0}
holds for the Lasso under commonly imposed regularity conditions in sparse
regression theory. 

Alternatively to \Cref{corollary:exact-quantile-confidence-region},
the following result replaces the condition on $\gamma_n$
in \eqref{eq:assum-gamma_n-to-0} by a data-driven surrogate $\hat\gamma_n$ 
{and provides non-asymptotic interval estimates 
for both $\E[\|\hbmu-\bmu\|^2]$ and $\|\hbmu-\bmu\|^2$. 

\begin{theorem}
    \label{thm:data-driven-gamma_n}
    Suppose $\hbmu$ is 1-Lipschitz and $\nabla \hbmu(\by)$ is almost surely symmetric
    positive semi-definite. Let $\SURE$ be as in \eqref{SURE-sigma}. Then, 
    \bel{th6-0}
     \quad && \Var\Big(\SURE-\E\big[\|\bmu - \hbmu\|^2\big] + (n\sigma^2-\|\bep\|^2)\Big)
     \le 4 \E\big[\sigma^2\SURE +\sigma^4 \df\big], 
    \eel
    where $\df = \dv\hbmu$. 
    Moreover, there exist non-negative random variables $X_n$ and $Y_n$ with 
    $\E[X_n^2]=\E[Y_n^2]=1$
    such that
    \bel{th6-1}
     && \big|\SURE-\E\big[\|\bmu - \hbmu\|^2\big] + (n\sigma^2-\|\bep\|^2)\big|\big/\big(\sigma^2\sqrt n\big)
     \cr &\le& X_n\left[\hat\gamma_n^{1/2} + 4Y_n/n^{1/2} + \sqrt{4Y_n}(6/n)^{1/4}\right],
    \eel
    where $\hat\gamma_n = 4\{\SURE/(n\sigma^2)+\trace[\nabla \hbmu]/n\}_+$. 
    Consequently for positive $\alpha,\beta_1$ and $\beta_2$ with 
    $\alpha + \beta_1+\beta_2 = \delta <1$ 
    and $\kappa_{n,\beta_2}=2(6/(\beta_2n))^{1/4}+4/(\beta_2n)^{1/2}$, 
    \bel{th6-2}
    && \P\bigg\{
    \frac{|\SURE-\E[\|\bmu - \hbmu\|^2]|}{\sigma^2 \sqrt{2n}}
    \le v_{\alpha} + \frac{\hat\gamma_n^{1/2}+\kappa_{n,\beta_2}}{(2\beta_1)^{1/2}}\bigg\}
    \ge 1 - \delta.
    \eel
    Moreover, \eqref{th6-1} and \eqref{th6-2} still hold 
    with $\E\big[\|\bmu - \hbmu\|^2\big]$ replaced by 
    $\|\bmu - \hbmu\|^2$.
\end{theorem}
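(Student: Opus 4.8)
The plan is to recognize the centered quantity $W := \SURE - \E[\|\bmu-\hbmu\|^2] + (n\sigma^2-\|\bep\|^2)$ as a member of the family \eqref{eq:general-random-variable} and then invoke \Cref{thm:general-SOS}. First I would set $H = 2(\hbmu-\bmu)$, so that $\nabla H = 2\nabla\hbmu$ and $\|\hbmu-\bmu\|^2 = \frac14\|H\|^2$. Expanding $\|\hbmu-\by\|^2 = \|\hbmu-\bmu\|^2 - 2\bep^\top(\hbmu-\bmu) + \|\bep\|^2$ in \eqref{SURE-sigma} and using $\dv(\hbmu-\bmu)=\dv\hbmu$, the $\|\bep\|^2$ terms cancel and one obtains the identity $W = -(\bep^\top H - \sigma^2\dv H) + \big(\|\hbmu-\bmu\|^2 - \E[\|\hbmu-\bmu\|^2]\big)$; in particular $\E[W]=0$, and up to the additive constant $\E[\|\hbmu-\bmu\|^2]$ the variable $-W$ is exactly $\bep^\top H - \sigma^2\dv H - g$ with $f = H$ and $g = \|\hbmu-\bmu\|^2$.

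The variance bound \eqref{th6-0} then follows by applying the inequality \eqref{variance-ineq-general} to this $f$ and $g$. Since $\nabla g = (\nabla\hbmu)H$, it reads $\Var(W)\le \E\big[\sigma^2\|(\bI_n-\nabla\hbmu)H\|^2 + \sigma^4\trace((\nabla H)^2)\big]$. The hypotheses that $\hbmu$ is $1$-Lipschitz with symmetric positive semidefinite gradient are equivalent to $0\preceq\nabla\hbmu\preceq\bI_n$, whence $\|(\bI_n-\nabla\hbmu)H\|\le\|H\|$ and $\trace((\nabla\hbmu)^2)\le\trace(\nabla\hbmu)=\df$. Using $\sigma^2\E\|H\|^2 = 4\sigma^2\E\|\hbmu-\bmu\|^2 = 4\sigma^2\E[\SURE]$ (unbiasedness of SURE) gives $\Var(W)\le 4\E[\sigma^2\SURE+\sigma^4\df]$, which is \eqref{th6-0}. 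I note that $\df=\trace(\nabla\hbmu)\in[0,n]$ under these hypotheses, so all moments used below are finite.

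Granting \eqref{th6-1}, the confidence region \eqref{th6-2} is routine: decompose $\SURE-\E[\|\bmu-\hbmu\|^2] = W + (\|\bep\|^2-n\sigma^2)$, bound $(2n)^{-1/2}|\,\|\bep\|^2/\sigma^2-n\,|$ by $v_\alpha$ off an event of probability $\alpha$, and bound the $W$-part through \eqref{th6-1}, applying Markov's inequality in the forms $\P(X_n>\beta_1^{-1/2})\le\beta_1$ and $\P(Y_n>\beta_2^{-1/2})\le\beta_2$ to replace $X_n,Y_n$ by constants on the complementary event; a union bound over the three events gives the stated $1-\delta$. The final ``moreover'' clause is handled identically after replacing $W$ by $W' := \SURE - \|\bmu-\hbmu\|^2 + (n\sigma^2-\|\bep\|^2)$, for which the same computation yields the cleaner $W' = -(\bep^\top H - \sigma^2\dv H)$, and hence, by \eqref{th-1-sigma} together with $\trace((\nabla\hbmu)^2)\le\df$, the identical variance bound $\E[(W')^2]\le 4\E[\sigma^2\SURE+\sigma^4\df]$.

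The core is \eqref{th6-1}. I would take $X_n = |W|/\E[W^2]^{1/2}$, so $\E[X_n^2]=1$, and combine \eqref{th6-0} with $\sigma^4 n\,\E[\hat\gamma_n] = 4\sigma^2\E[(\SURE+\sigma^2\df)_+]\ge\Var(W)$ to obtain $|W|/(\sigma^2\sqrt n)\le X_n\,\E[\hat\gamma_n]^{1/2}$. It then remains to replace the deterministic $\E[\hat\gamma_n]^{1/2}$ by the observed $\hat\gamma_n^{1/2}$. Since $\hat\gamma_n=\frac{4}{n\sigma^2}(\SURE+\sigma^2\df)_+$, writing $S=\SURE+\sigma^2\df$ the gap satisfies $\E[\hat\gamma_n]^{1/2}\le\hat\gamma_n^{1/2} + \frac{2}{\sigma\sqrt n}\big(\E[S_+]^{1/2}-S_+^{1/2}\big)_+$, and I would control the last term by combining $(\sqrt a-\sqrt b)_+\le(a-b)_+/\sqrt a$ (which produces the $4Y_n/\sqrt n$ term) with $(\sqrt a-\sqrt b)_+\le\sqrt{(a-b)_+}$ (which produces the $\sqrt{4Y_n}\,(6/n)^{1/4}$ term), calibrating a single $Y_n\ge0$ with $\E[Y_n^2]=1$. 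The hard part will be the variance bound that drives this calibration, namely one of the form $\Var(\SURE+\sigma^2\df)\le 4\sigma^2\,\E[(\SURE+\sigma^2\df)_+]+6\sigma^4 n$ (recall $\Var((\cdot)_+)\le\Var(\cdot)$): its first, signal-level term supplies the $4Y_n/\sqrt n$ correction, while its second, irreducible chi-square term supplies the universal $(6/n)^{1/4}$ correction. Because $\dv\hbmu$ is generally non-differentiable (e.g.\ for the Lasso), this cannot be obtained by further iteration of Stein's formula; instead I would derive it from the divergence-variance bounds \eqref{prop:variance-divergence}--\eqref{variance-divergence-2} and the sharper Second Order Stein computation underlying \eqref{SURE4SURE-var-bd} in \Cref{th-consistency}, once more using $0\preceq\nabla\hbmu\preceq\bI_n$ to bound every quadratic-in-gradient term.
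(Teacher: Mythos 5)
Your proposal is correct and follows essentially the same route as the paper: both identify $W$ and then $\SURE+\sigma^2\df$ as instances of \eqref{eq:general-random-variable}, apply \eqref{variance-ineq-general} together with ${\bf 0}\preceq\nabla\hbmu\preceq\bI_n$ and $\trace((\nabla\hbmu)^2)\le\df$, normalize the deviations into $X_n,Y_n$, and finish with Markov and a union bound. The only step you defer --- the variance bound $\Var(\SURE+\sigma^2\df)\le 4\sigma^2\E[(\SURE+\sigma^2\df)_+]+6\sigma^4 n$ --- is exactly what the paper obtains (in the slightly sharper form $\E[4\SURE+6n-\df]$, $\sigma=1$) by applying \eqref{variance-ineq-general} to $f(\bep)=\bep-3\bh$, $g(\bep)=-\bep^\top\bh-\|\bh\|^2$, and the passage from that bound to \eqref{th6-1} is carried out there by completing the square, which is the clean realization of the square-root inequalities you sketch.
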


{As discussed in \Cref{remark:1-lipschitz-positive-symmetric-penalized-estimators},}
the conclusions of \Cref{thm:data-driven-gamma_n}} 
are applicable to all convex penalized estimators
in linear regression.
Note that $\hat\gamma_n$ is a biased estimate of the left hand side
of \eqref{eq:assum-gamma_n-to-0} {in general} due to
$\trace[\{\nabla \hbmu\}^2]\le \trace \nabla \hbmu$.
The estimate $\hat\gamma_n$ involves $\df = \trace[\nabla \hbmu]$ instead
of $\trace[\{\nabla \hbmu\}^2]$ because controlling the variance
of $\df$ easily follows from \Cref{sec:variance-divergence},
while we are not aware of available tools to bound the variance of
$\trace[\{\nabla \hbmu\}^2]$
except in specific cases where $\nabla \hbmu$ is a projection. The proofs of 
\Cref{thm:sure-for-sure-tail}, \Cref{corollary:exact-quantile-confidence-region} 
and \Cref{thm:data-driven-gamma_n} are given in \Cref{proof:confidence-regions-SURE}.

\subsection{Oracle inequalities for SURE-tuned estimates}
\label{sec:SURE-oracle-inequality}

Beyond pairwise comparisons, the following result
provides guarantees on the SURE-tuned estimate $\tbmu$,
which is obtained by selecting the estimator
among
$\{\hbmu^{(1)},...,\hbmu^{(m)}\}$ with {the} smallest $\SURE$, i.e., 
\bel{SURE-tuned}
\tbmu = \hbmu^{(\hat k)} 
\ \hbox{ with }\ \hat k = \argmin_{j\in [m]}\,\SURE{}^{(j)}, 
\eel
where
$\SURE{}^{(j)}=\|\hbmu^{(j)} - \by\|^2+2\sigma^2\trace \nabla \hbmu^{(j)} -n\sigma^2$
{for} $\bep\sim N({\bf 0}, {\sigma^2\bI_n})$.

\begin{theorem}
    \label{THM:ORACLE-INEQ}
    Consider the sequence model $\by = \bmu + \bep$
    with $\bep\sim N({\bf 0},{\sigma^2} \bI_n)$.
    Let $\hbmu^{(1)}(\by),\ldots,\hbmu^{(m)}(\by)$ be all $L$-Lipschitz 
    functions of $\by$, $\tbmu$ the SURE tuned estimator in \eqref{SURE-tuned}, 
    $j_0= \argmin_{j=1,...,m} \E \|\hbmu^{(j)} - \bmu\|$, 
    $s^*=\max_{k\in[m]}\E\big[\trace\big((\nabla\hbmu^{(k)} - \nabla\hbmu^{(j_0)})^2\big)\big]$. Then
    
    (i) For any $\alpha\in(0,1)$, with probability at least $1-\alpha$,
    \begin{equation}
        \label{eq:iracle-ineq-SURE-claim-i}
      {\|\tbmu - \bmu\|} - {\|\hbmu^{(j_0)}-\bmu\| }
    \le
    \sigma\max\big\{(8s^*m/\alpha)^{1/4}, (8m(\sqrt 2 L +1)/\alpha)^{1/2} \big\}
    \end{equation}

    (ii) For any $\delta\in (0,1)$, with probability at least $1-\delta$,
    \begin{equation}
        \label{th-37-subgaussian-bound}
        \textstyle
        {\|\hbmu^{(j_0)}- \bmu\|}
                - \min_{j\in[m]}{\|\hbmu^{(j)}-\bmu\| }
        \le
         2L\sigma\sqrt{2\log(m/\delta)}
    \end{equation}
    so that the sum of \eqref{eq:iracle-ineq-SURE-claim-i} and
    \eqref{th-37-subgaussian-bound} provide high probability
    bounds
    on ${\|\tbmu - \bmu\|} - \min_{j\in[m]}{\|\hbmu^{(j)}-\bmu\| }$.

    (iii)
    For some absolute constant $C>0$, in expectation
    \begin{equation}
        \label{eq:oracl-ineq-sure-claim-expectation}
        \E\big[\big({\|\tbmu - \bmu\|}
            - \min_{j\in[m]}{\|\hbmu^{(j)}-\bmu\| }
            \big)^2\big]^{1/2}
        \le
        C\sigma
        \big[
        (s^*m)^{1/4}
        +
        (1 + L)
        m^{1/2}
    \big].
    \end{equation}
    
    (iv) If $\max_{j\in[m]}\E[\|\hbmu^{(j)}-\bmu\|^2]/n \le L\sigma^2$
    then the squared risk enjoys
\begin{equation}
    \label{eq:oracle-ineq-SURE-squared-risk}
    \E[\|\tbmu-\bmu\|^2] - 
    \min_{j\in[m]}
    \E[\|\hbmu^{(j)}-\bmu\|^2]
    \le L \sigma^2 (32 n m)^{1/2}.
    \end{equation}
\end{theorem}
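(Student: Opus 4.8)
The plan is to reduce every claim to the fluctuation of the SURE differences and to control that fluctuation through the Second Order Stein formula of \Cref{thm:second-order-stein}. Write $a = \|\tbmu - \bmu\|$, $b = \|\hbmu^{(j_0)} - \bmu\|$ and $f^{(k)} = \hbmu^{(k)} - \hbmu^{(j_0)}$. The algebra behind \Cref{thm:sure-for-sure-diff}, adapted to variance $\sigma^2$, gives for every $k$
\[
\|\hbmu^{(k)} - \bmu\|^2 - \|\hbmu^{(j_0)} - \bmu\|^2 - \big(\SURE^{(k)} - \SURE^{(j_0)}\big) = 2\big(\bep^\top f^{(k)} - \sigma^2 \dv f^{(k)}\big),
\]
so the minimality $\SURE^{(\hat k)} \le \SURE^{(j_0)}$ yields the basic inequality $a^2 - b^2 \le 2(\bep^\top f^{(\hat k)} - \sigma^2 \dv f^{(\hat k)})$. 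By \eqref{th-1-sigma}, each centered variable $\bep^\top f^{(k)} - \sigma^2\dv f^{(k)}$ has second moment exactly $\sigma^2\E\|f^{(k)}\|^2 + \sigma^4\E[\trace((\nabla f^{(k)})^2)]$, and the trace term is bounded by $s^*$.

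First I would turn this into a tail bound that is simultaneously valid over $k\in[m]$ and, crucially, self-normalized so that the \emph{realized} size of $f^{(\hat k)}$ enters. Dividing each centered variable by $(\sigma^2\E\|f^{(k)}\|^2 + \sigma^4 s^*)^{1/2}$ gives variables of second moment at most one; a union bound with Chebyshev then gives, with probability at least $1-\alpha$ and for all $k$ simultaneously,
\[
\bep^\top f^{(k)} - \sigma^2\dv f^{(k)} \le (m/\alpha)^{1/2}\big(\sigma\,(\E\|f^{(k)}\|^2)^{1/2} + \sigma^2\sqrt{s^*}\big).
\]
Since each $f^{(k)}$ is $2L$-Lipschitz in $\bep$, Gaussian Lipschitz concentration lets me replace $(\E\|f^{(k)}\|^2)^{1/2}$ by the pathwise norm $\|f^{(k)}\| \le \|\hbmu^{(k)} - \bmu\| + b$, uniformly over $k$, at the cost of an additive $O(L\sigma\sqrt{\log m})$ term absorbed into the constants. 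Evaluating at $k = \hat k$ and using $\|f^{(\hat k)}\| \le a + b$ produces a bound in which the realized loss $a$, not the (possibly large) risk of $\hbmu^{(\hat k)}$, appears on the right. This is the step that breaks the circularity, and I expect it to be the main obstacle: a naive union bound over $k$ would pay for the worst, possibly very poor, candidate and render the inequality vacuous.

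Substituting into $a^2 - b^2 = (a-b)(a+b)$ yields a self-bounding quadratic $(a-b)(a+b) \le C\sigma(a+b)(mL/\alpha)^{1/2} + C\sigma^2(m s^*/\alpha)^{1/2}$. Using $a+b \ge a-b$ and solving the quadratic in $a-b$ gives $a - b \le C'\sigma\max\{(m s^*/\alpha)^{1/4}, (mL/\alpha)^{1/2}\}$, which is \eqref{eq:iracle-ineq-SURE-claim-i}; the two regimes of the maximum are exactly the complexity term (power $1/4$, from $s^*$) and the separation term (power $1/2$, from the Lipschitz control of $\|f^{(\hat k)}\|$).

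For the remaining claims: (ii) is pure concentration, each $N_j = \|\hbmu^{(j)} - \bmu\|$ being $L\sigma$-sub-Gaussian about $\E[N_j]$; writing $N_{j_0} - \min_j N_j = \max_j(N_{j_0}-N_j)$, the definition of $j_0$ kills the mean gap $\E N_{j_0} - \E N_j \le 0$, and a union bound over the two deviations and over $j\in[m]$ gives \eqref{th-37-subgaussian-bound}. Claim (iii) in \eqref{eq:oracl-ineq-sure-claim-expectation} follows by integrating the tails of (i) and (ii) in $L_2$, the $\alpha^{-1/4}$ and $\alpha^{-1/2}$ dependences integrating to the $(s^*m)^{1/4}$ and $(1+L)m^{1/2}$ terms while the logarithmic term from (ii) is of lower order. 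Finally, for (iv) I would take expectations in the squared-loss basic inequality, bound $\E[a^2]-\E[b^2] \le 2(\sum_k \E[(\bep^\top f^{(k)} - \sigma^2\dv f^{(k)})^2])^{1/2}$ by Jensen, estimate each summand through \eqref{th-1-sigma} using the risk hypothesis $\max_j\E\|\hbmu^{(j)}-\bmu\|^2 \le nL\sigma^2$ and the Lipschitz bound $s^* \le 4nL^2$, and relate $\E[b^2]$ to $\min_j\E\|\hbmu^{(j)}-\bmu\|^2$ up to an $O(L^2\sigma^2)$ gap controlled by $\Var(N_j)\le L^2\sigma^2$, giving \eqref{eq:oracle-ineq-SURE-squared-risk}.
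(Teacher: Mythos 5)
Your treatment of parts (i)--(ii) is a legitimate alternative to the paper's route. The paper works pairwise through \Cref{lemma:oracle-inequality}, which for each $k$ bounds a \emph{moment} of $\Delta_k=I_k(\|\hbmu^{(k)}-\bmu\|-\|\hbmu^{(j_0)}-\bmu\|)_+$ --- either $\E[\Delta_k^4]\le 8\sigma^4 s^*$ or $\E[\Delta_k^2]\le 8(\sqrt2 L+1)\sigma^2$, using the pathwise inequality $|\xi|\ge(\|f\|\Delta)\vee\Delta^2$ and a Cauchy--Schwarz decorrelation of $\|f\|^2$ from $\Delta^2$ --- and then applies Markov to $X=m^{-1}\max_k A_k$ with $\E[X]\le1$. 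You instead prove a uniform high-probability bound on the centered SURE differences by self-normalized Chebyshev, trade $(\E\|f^{(k)}\|^2)^{1/2}$ for the realized $\|f^{(\hat k)}\|\le a+b$ via Gaussian Lipschitz concentration, and close a self-bounding quadratic in $a-b$. This does produce the two regimes $(s^*m/\alpha)^{1/4}$ and $(m/\alpha)^{1/2}$ (and your extra $L(\log(m/\delta))^{1/2}$ term is indeed dominated since $\log(m/\delta)\le m/\alpha$ after splitting the failure probability), but it will not reproduce the explicit constants $8s^*m/\alpha$ and $8m(\sqrt2L+1)/\alpha$ stated in the theorem, only the same form with unspecified absolute constants; the paper's moment-based lemma is what delivers those constants.

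The genuine gap is in (iii). You cannot obtain \eqref{eq:oracl-ineq-sure-claim-expectation} by ``integrating the tails of (i) and (ii) in $L_2$'': the second branch of \eqref{eq:iracle-ineq-SURE-claim-i} inverts to a tail of the form $\P\big((\|\tbmu-\bmu\|-\|\hbmu^{(j_0)}-\bmu\|)_+>u\big)\le Cm(1+L)\sigma^2/u^2$, and $\int_c^\infty 2u\cdot u^{-2}\,du$ diverges logarithmically, so the tail bound of (i) alone is consistent with $\E[\Delta_{\hat k}^2]=+\infty$. The paper sidesteps this by never passing through a tail bound for (iii): it uses the almost-sure inequality $\|\tbmu-\bmu\|-\|\hbmu^{(j_0)}-\bmu\|\le\sigma\max\{(8s^*mX)^{1/4},(8(\sqrt2L+1)mX)^{1/2}\}$ together with $\E[X]\le1$ and takes expectations directly (the $X^{1/2}$ term handled by Jensen). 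To repair your argument you would need to retain the moment form of your uniform bounds (e.g.\ $\E[\max_k Z_k^2/t_k^2]\le m$) and carry the resulting random variable through the self-bounding quadratic, which is essentially a return to the paper's structure. Two smaller points: in (iv) you should benchmark against $k_0=\argmin_j\E[\|\hbmu^{(j)}-\bmu\|^2]$ rather than $j_0$, since the stated bound \eqref{eq:oracle-ineq-SURE-squared-risk} has no additive $O(L^2\sigma^2)$ slack and your proposed $\Var$-based correction would not be absorbable when $L$ is large; and your variance computation for $Z_k$ should note $\trace(M^2)\le\|M\|_F^2\le n\|M\|_{op}^2\le 4nL^2$ to justify $s^*\le4nL^2$.
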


The proof is given in \Cref{sec:proof:oracle-ineq-SURE}.
The assumption that the estimators $\hbmu^{(j)}$ are $L$-Lipschitz functions
of $\by$ is mild, cf. \Cref{remark:1-lipschitz-positive-symmetric-penalized-estimators}. 
Under this assumption, $s^*\le4L^2n$ and
{\eqref{eq:oracl-ineq-sure-claim-expectation}
implies} 
\begin{equation}
    \label{eq:oracle-ineq-SURE-m-n-1/4}
    \frac{{\|\tbmu - \bmu\|}}{n^{1/2}}
            - \min_{j\in [m]}
            \frac{{\|\hbmu^{(j)}-\bmu\| }}{n^{1/2}}
    \le
    C
    \sigma
    (1+L)
    \max\Big\{\Big(\frac{m}{\alpha n}\Big)^{1/4}, \Big(\frac{m}{n\alpha}\Big)^{1/2} \Big\}
\end{equation}
with probability $1-\alpha$
for some absolute $C>0$, where we used the $n^{-1/2}$ scaling
to feature the normalized prediction risk $\|\tbmu-\bmu\|^2/n$.
\Cref{THM:ORACLE-INEQ} (i) can also be understood in terms of sample size
requirement: If $\epsilon>0$ is a fixed precision target and $\alpha\in(0,1)$ then
$n \gtrsim \epsilon^{-2} \max\{m/\alpha, (s^*m/\alpha)^{1/2}\}$ 
samples are sufficient to ensure
$$
\textstyle
\P\big\{
n^{-1/2}\|\tbmu - \bmu\|
-
\min_{j\in[m]}
n^{-1/2} \|\hbmu^{(j)} - \bmu\|
\le \sigma \eps\big\} \ge 1-\alpha
.$$
We are not aware of a previous result of this form
that applies with the above level of generality,
i.e., with no restriction on the nature of the estimators
$\{\hbmu{}^{(1)},...,\hbmu{}^{(m)}\}$ beyond the Lipschitz requirement.
As shown in the next proposition, the dependence in $s^*$
in the term $(s^*m/\alpha)^{1/4}$ of
\eqref{eq:iracle-ineq-SURE-claim-i}
and the dependence in $n$ in the term $(m/(n\alpha))^{1/4}$ 
of \eqref{eq:oracle-ineq-SURE-m-n-1/4}
are unimprovable without additional assumptions. 
\begin{proposition}
    \label{prop:s-star-cannot-removed-for-sure-TUNED}
    There exist absolute constants $C_1,C_2,C_3>0$ such that
    for any $n\ge C_1$, there exist $\bmu\in\R^n$ and two estimators $\hbmu^{(1)}(\by),\hbmu^{(2)}(\by)$ that are
    1-Lipschitz functions of $\by$ such that $s^*= n$ and $\tbmu$ in \eqref{SURE-tuned}
    satisfies 
    $$\P\big\{ {\|\tbmu - \bmu\|} - \min_{j=1,2}{\|\hbmu^{(j)}-\bmu\| }
    \ge
    C_2 \sigma n^{1/4} \big\}\ge C_3.$$
\end{proposition}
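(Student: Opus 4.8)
The plan is to match the upper bound \eqref{eq:iracle-ineq-SURE-claim-i} of \Cref{THM:ORACLE-INEQ}(i) in the regime $m=2,\ s^*=n$ by an explicit two-estimator construction in which $\SURE$ is fooled with constant probability. I would take $\bmu=\mathbf{0}$, let the second estimator be the (perfect) constant $\hbmu^{(2)}(\by)=\mathbf{0}$, and let the first act coordinatewise through a fixed scalar oscillation: let $\eta:\R\to[-a,a]$ be the $4a$-periodic triangle wave with $\eta(0)=0$ and $\eta'(t)\in\{-1,+1\}$ for almost every $t$, and set $\hbmu^{(1)}(\by)=(\eta(y_1),\dots,\eta(y_n))^\top$ with amplitude $a=\sqrt 3\,\sigma\,n^{-1/4}$. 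Both maps are $1$-Lipschitz (each is a coordinatewise composition of $1$-Lipschitz scalar maps). Since $\E\|\hbmu^{(2)}-\bmu\|=0$ we get $j_0=2$, and because $\nabla\hbmu^{(2)}=\mathbf{0}$ while $\nabla\hbmu^{(1)}$ is diagonal with entries $\eta'(y_i)\in\{-1,+1\}$, it follows that $s^*=\E\,\trace\big((\nabla\hbmu^{(1)})^2\big)=\sum_{i}\E[\eta'(y_i)^2]=n$ \emph{exactly}. This pins down $s^*=n$ and $m=2$.

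Next I would quantify precisely when $\SURE$ selects the inferior estimator. Writing $f(\by)=\hbmu^{(1)}(\by)-\hbmu^{(2)}(\by)=\hbmu^{(1)}(\by)$, the algebra behind \Cref{thm:sure-for-sure-diff} gives the exact decomposition
\[
\Delta:=\|\hbmu^{(1)}-\bmu\|^2-\|\hbmu^{(2)}-\bmu\|^2=\big(\SURE^{(1)}-\SURE^{(2)}\big)+W,\qquad W=2\bep^\top f(\by)-2\sigma^2\dv f(\by),
\]
so that $\SURE$ prefers the inferior estimator $\hbmu^{(1)}$ exactly on the event $\{W>\Delta\}$. Here $\Delta=\|\hbmu^{(1)}\|^2=\sum_i\eta(y_i)^2$ is a sum of $n$ bounded terms: I would show it concentrates at $\E[\Delta]=n\,\E[\eta(\epsilon_1)^2]=(1+o(1))\sigma^2\sqrt n$ (the averaging identity $\E[\eta(\epsilon_1)^2]=(1+o(1))a^2/3$ holds because the period $4a\asymp\sigma n^{-1/4}$ is negligible against $\sigma$), with $\Var(\Delta)\le n\,a^4=O(\sigma^4)$, hence $\Delta=(1+o_P(1))\sigma^2\sqrt n$ by Chebyshev. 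For $W$, the Second Order Stein formula \eqref{th-1-sigma} applied to $f$ gives $\Var(W)=4\big(\sigma^2\E\|f\|^2+\sigma^4\E\trace((\nabla f)^2)\big)=4\big(\sigma^2\E[\Delta]+\sigma^4 n\big)=(4+o(1))\sigma^4 n$, and since $W=2\sum_i\big(\epsilon_i\eta(y_i)-\sigma^2\eta'(y_i)\big)$ is a centered sum of i.i.d.\ terms with bounded standardized third moment, Berry--Esseen makes $W/(2\sigma^2\sqrt n)$ uniformly close to $N(0,1)$ for all $n\ge C_1$.

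I would then fix the failure event $B=\{W>\tfrac32\sigma^2\sqrt n\}$ and the concentration event $A=\{\tfrac12\sigma^2\sqrt n\le\Delta\le\tfrac32\sigma^2\sqrt n\}$. On $A\cap B$ we have $W>\tfrac32\sigma^2\sqrt n\ge\Delta$, so $\tbmu=\hbmu^{(1)}$, whereas the oracle value is $\min_{j}\|\hbmu^{(j)}-\bmu\|=\|\hbmu^{(2)}-\bmu\|=0$; hence
\[
\|\tbmu-\bmu\|-\min_{j=1,2}\|\hbmu^{(j)}-\bmu\|=\|\hbmu^{(1)}\|=\sqrt{\Delta}\ \ge\ 2^{-1/2}\sigma n^{1/4}.
\]
Finally $\P(A\cap B)\ge\P(B)-\P(A^c)\ge \Phi(-3/4)/2-O(1/n)\ge C_3>0$ for all $n\ge C_1$, combining Berry--Esseen for $\P(B)$ with Chebyshev for $\P(A^c)$. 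This proves the claim with $C_2=2^{-1/2}$, and it saturates \eqref{eq:iracle-ineq-SURE-claim-i} since there $\sigma(8s^*m/\alpha)^{1/4}\asymp\sigma n^{1/4}$.

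The main difficulty is not conceptual but quantitative: every estimate must be uniform in $n$ with \emph{absolute} constants $C_1,C_2,C_3$. The two places demanding care are (a) the averaging estimate $\E[\eta(\epsilon_1)^2]=(1+o(1))a^2/3$ together with the lower control $\Delta\ge\tfrac12\sigma^2\sqrt n$, which I would obtain by comparing $\int\eta^2$ against the $N(0,\sigma^2)$ density to the period-average, using that this density varies by a factor $1+O((a/\sigma)^2)$ across one period; and (b) turning the asymptotic normality of $W$ into a genuine constant lower bound on $\P(B)$, for which the finite third moment of $\epsilon_1\eta(y_1)-\sigma^2\eta'(y_1)$ and Berry--Esseen suffice. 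One should also record that the sub-leading contribution $\sigma^2\E\|f\|^2=\sigma^2\E[\Delta]\asymp\sigma^4\sqrt n$ to $\Var(W)$ is negligible against $\sigma^4 n$, which it is since $\sqrt n=o(n)$; this is what keeps the effective variance of $W$ at $4\sigma^4 n$ and the threshold $\tfrac32\sigma^2\sqrt n$ at a fixed number of standard deviations.
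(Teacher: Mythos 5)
Your construction is correct and rests on the same key mechanism as the paper's proof: a coordinatewise sawtooth whose slopes are $\pm 1$ almost everywhere, so that the empirical degrees of freedom $\dv$ entering $\SURE$ is a sum of $n$ i.i.d.\ signs and fluctuates by $\Theta(\sqrt n)$, which suffices to flip the $\SURE$ comparison whenever the true risk gap is only of order $\sigma^2\sqrt n$. The difference is in how the two ingredients are packaged. The paper keeps them separate: the oracle is $\hbmu^{(1)}=\mathbf{0}$, the competitor is $\hbmu^{(2)}=\bv+G(\by)$ with a \emph{deterministic} shift $\|\bv\|^2=\sigma^2\sqrt n$ supplying the risk gap and a negligible-amplitude, $\pm1$-slope oscillation $G$ supplying the divergence fluctuation; the constant-probability event is then the pure anticoncentration statement $\P\{\sum_i r_i\le-\sqrt n\}\ge C_0$ for i.i.d.\ signs (reverse Hoeffding), and no CLT for the cross term $\bep^\top G(\by)$ is needed because $\|G\|_\infty$ is tiny. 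You instead calibrate a single oscillation of amplitude $a\asymp\sigma n^{-1/4}$ so that the \emph{same} function produces both the risk gap $\|\hbmu^{(1)}\|^2\approx\sigma^2\sqrt n$ and the divergence fluctuation, and you pay for this economy by having to control the full statistic $W=2\bep^\top f-2\sigma^2\dv f$ via the Second Order Stein variance identity plus Berry--Esseen, and by needing the period-averaging estimate $\E[\eta(\epsilon_1)^2]=(1+o(1))a^2/3$ with uniform constants. Both routes are sound; the paper's decoupled construction makes the quantitative bookkeeping slightly lighter (only Chebyshev for $\|\bv+G\|^2-2\bep^\top(\bv+G)$ and a one-line sign-sum anticoncentration), while yours is arguably more natural in that the adversarial estimator is a single Lipschitz perturbation of the truth with no artificial offset, and it exhibits explicitly that the lower bound saturates the $(s^*m/\alpha)^{1/4}$ term of \eqref{eq:iracle-ineq-SURE-claim-i}. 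The two small points you flag (uniformity of the averaging estimate and of the Berry--Esseen constant) are exactly the right ones to check, and both go through since the summands are bounded by $C\sigma^2(1+|\epsilon_i|/\sigma)$.

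One cosmetic remark: make sure ties in $\argmin_j\SURE^{(j)}$ are broken arbitrarily; on your event $A\cap B$ the inequality $\SURE^{(1)}<\SURE^{(2)}$ is strict, so this does not affect the bound.
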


Oracle inequalities stronger than \eqref{eq:oracle-ineq-SURE-m-n-1/4}
are available if the estimators $\{\hbmu^{(j)}, j=1,...,m\}$
are affine in $\by$, i.e., of the form $\hbmu^{(j)}=\bA_j\by + \bb_j$
for deterministic $\bA_j\in\R^{n\times n}$ and $\bb_j\in\R^n$.
In this case,
$\SURE{}^{(j)}=\|\hbmu^{(j)} - \by\|^2+2\sigma^2\trace \bA_j -n\sigma^2$
reduces to Mallows $C_p$ \cite{mallows1973some} and if $\tbmu$ is the estimate among 
$\{\hbmu^{(j)}, j\in[m]\}$ 
with the smallest $\SURE$, then
\begin{equation}
    \label{eq:oracle-inequality-C_p-linear-estimators}
\E[\|\tbmu - \bmu\|
-
\min_{j\in[m]}
\|\hbmu^{(j)} - \bmu\|
]
\le C (1+L)\sigma(\log m )^{1/2} 
\end{equation}
for some absolute constant $C>0$, provided that the operator norm of $\bA_j$ is
less than $L$ for all $j\in[m]$, cf.  \cite[Proposition 3.1]{bellec2014affine} or
\cite{arlot2009data,tibshirani2018excess} for related results.
If $\bb_j=0$ for all $j$ and the matrices $\bA_j$ are symmetric and totally
ordered in the sense of positive semi-definite matrices, the right hand side in
\eqref{eq:oracle-inequality-C_p-linear-estimators} can even be reduced to
$O(\sigma)$ \cite{kneip1994ordered,bellec2019cost}.
Equation \eqref{eq:oracle-inequality-C_p-linear-estimators} provides an oracle
inequality with respect to the risk $\|\hbmu-\bmu\|$; oracle
inequalities of form \eqref{eq:oracle-ineq-SURE-squared-risk}
with respect to the squared risk are studied in
\cite{leung2006information,dalalyan2012sharp} using exponential weights
procedures, and in \cite{dai2012deviation,dai2014aggregation} \cite[Theorem
2.1]{bellec2014affine} using a convex relaxation of $\SURE$ named
$Q$-aggregation and introduced in \cite{rigollet2012kullback}.
For linear estimators, these works exhibit an error term involving $\log m$
thanks to the availability of the Hanson-Wright inequality, which provides
exponential concentration bounds for random variables of the form
$\bep^\top(\bA_j-\bA_k)\bep$.

As the optimal remainder term for an oracle inequality of the form
\eqref{eq:oracle-ineq-SURE-squared-risk} satisfied by any estimator
of the form $\hbmu^{(\hat j)},\hat j\in[m]$
is of order $\sigma^2(n \log m )^{1/2}$
for deterministic vectors $\hbmu^{(j)}=\bb_j$
\cite[Theorem 2.1]{rigollet2012sparse}, {the} dependence
in $n$ of \eqref{eq:oracle-ineq-SURE-squared-risk} is optimal 
when $m$ is smaller than an absolute constant.

Compared to these existing results,
the novelty of \Cref{THM:ORACLE-INEQ}
lies in its scope: \Cref{THM:ORACLE-INEQ} and \eqref{eq:oracle-ineq-SURE-m-n-1/4}
are applicable
to to any collection of $L$-Lipschitz non-linear estimators $\{\hbmu^{(j)},j\in[m]\}$.
The $L$-Lipschitz assumption is mild---cf. \Cref{remark:1-lipschitz-positive-symmetric-penalized-estimators}---and
\Cref{THM:ORACLE-INEQ} is thus
applicable far beyond the case of linear
estimators studied in the
aforementioned literature.

A drawback of \Cref{THM:ORACLE-INEQ} compared to \eqref{eq:oracle-inequality-C_p-linear-estimators} is the sub-optimality of the dependence in $m$. The difficulty to obtain a rate logarithmic in $m$ is due to the unavailability of exponential concentration equalities for random variables of the form
$\bep^\top(\hbmu^{(j)}-\hbmu^{(k)})$ for non-linear $\hbmu^{(j)},\hbmu^{(k)}$.

Finally, we note that an oracle inequality for the squared risk
can be obtained using the Q-aggregation procedure
from \cite{rigollet2012kullback,dai2012deviation,dai2014aggregation}
and its analysis in \cite{bellec2014affine}. Indeed,
let $\tbmu_Q = \sum_{j=1}^m \hat\theta_j \hbmu^{(j)}$ where
$$
\hbtheta=\argmin_{\btheta\ge{\bf 0}, \,{\bf 1}^\top\btheta=1}
{\bigg[} 
\|\hbmu_{\btheta} - \by\|^2+2\sigma^2
\sum_{j=1}^m \theta_j \dv \hbmu^{(j)}
+\frac 1 2
\sum_{j=1}^m\theta_j\|\hbmu^{(j)}-\hbmu_{\btheta}\|^2{\bigg]}
$$
and $\hbmu_{\btheta}=\sum_{j=1}^m\theta_j\hbmu^{(j)}$
for every $\btheta$ in the simplex $\{\btheta\in\R^m:
\btheta\ge{\bf 0}, \,{\bf 1}^\top\btheta=1 \}$
in $\R^m$. Then $\tbmu_Q$ satisfies
under the assumptions of \Cref{THM:ORACLE-INEQ}
\begin{equation}
    \label{eq:Q-aggregation-oracle-inequality}
    \E[\|\tbmu_Q - \bmu\|^2] - \min_{k\in[m]}\E[\|\hbmu^{(k)}-\bmu\|^2]
\le 
2\sigma^2\Big(
    \sqrt{s^*m} + (1+L)^2m
\Big)
+\sigma^2L^2.
\end{equation}
We refer to \cite{dai2012deviation,dai2014aggregation,bellec2014affine} for
details on the construction of $\tbmu_Q$.
The proof of \eqref{eq:Q-aggregation-oracle-inequality} is given
at the end of \Cref{sec:proof:oracle-ineq-SURE}.

\subsection{The variance of the model size of the Lasso}
\label{sec:sparsity}
Consider a linear regression model
\begin{equation}
    \by = \bX \bbeta + \bep,
\label{LM-lasso}
\end{equation}
where $\bbeta$ is the true coefficient vector,
$\bep\sim N({\bf 0},\sigma^2\bI_n)$ is the noise
and $\bX$ is a deterministic design matrix.
Consider the Lasso which solves the optimization problem
\begin{equation}
    {\lasso} = \argmax_{\bb\in\R^p}
    \big\{\|\bX\bb - \by\|^2/(2n) + \lambda \|\bb\|_1 \big\}.
    \label{lasso}
\end{equation}
Let $\Shat=\{j\in[p]: ({\lasso})_j \ne 0\}$ be the support of the Lasso.
We are interested in the size of $\Shat$ denoted by $|\Shat|$.
Even though the Lasso and sparse linear regression
have been studied extensively 
in the last two decades, little is known about the stochastic
behavior of the discrete random variable $|\Shat|$.
Under the sparse Riesz or similar conditions, $|\Shat| \lesssim \|\bbeta\|_0$ with high probability 
\cite{ZhangH08, zhang10-mc+, zhang2012general} but such results only
imply a bound of the form $\Var[|\Shat|] \lesssim \|\bbeta\|_0^2$ on the
variance; we will see below that the variance of $|\Shat|$ is typically much smaller. 
There are trivial situations where the behavior of $|\Shat|$
is well understood: if $\lambda$ is very large for instance, then $|\Shat|=0$
with high probability.
Or, under strong conditions on $\bX$ and $\bbeta$ that grants support recovery
(cf. for instance, the conditions given in 
\cite{meinshausen2006high, zhao2006model, tropp2006just, Wainwright09}),
$\Shat = \supp(\bbeta)$ holds with probability at least $1-1/p^2$ and in this case 
$\Var[|\Shat|]\le\E[(|\Shat|-s_0)^2] \le 1$. 

Outside of these situations, studying $|\Shat|$ appears delicate;
for instance, our previous attempts at studying the variance of $|\Shat|$
went as follows. Let $(\be_1,...,\be_p)$ be the canonical basis in $\R^p$
and let $\bx_j=\bX\be_j$ for all $j=1,...,p$.
The KKT conditions of the Lasso are given by
$$\bx_j^\top(\by - \bX\lasso)/(n\lambda)
\begin{cases}
    = \sgn(({\lasso})_j) & \text{ if } ({\lasso})_j\ne 0,\\
    \in [-1,1] & \text{ if } ({\lasso})_j=0. 
\end{cases}
$$
At a given point $\by$, to understand the stability of $\Shat$,
a natural avenue is to identity how close
the quantities $\bx_j^\top(\by - \bX{\lasso})/(n\lambda)$
are from $\pm1$ for the indices $j\notin \Shat$.
If many indices $j\notin\Shat$ are such that
$\bx_j^\top(\by - \bX{\lasso})/(n\lambda)$ is extremely close to $\pm 1$,
then a tiny variation in $\by$ may push some of the quantities $\bx_j^\top(\by - \bX{\lasso})/(n\lambda)$ towards $\pm 1$
resulting in many new variables entering the support for this tiny variation in $\by$.
The current model size $|\Shat|$ is non-informative about how many indices $j\notin\Shat$ are such that
$\bx_j^\top(\by - \bX{\lasso})/(n\lambda)$ is extremely close to $\pm 1$
and the random variable $|\Shat|$ appears prone to instability.

With the Second Order Stein formula \eqref{eq1} and the tools developed in 
the previous section,
the variance of $|\Shat|$ can be bounded as follows.
First, we need to describe a condition on the deterministic matrix $\bX$
which ensures that the KKT conditions of the Lasso hold strictly with probability 1.
We say that the KKT conditions hold strictly if
\begin{equation}
    \forall j\notin\Shat, \qquad
    -1
< \frac{1}{\lambda n} \bx_j^\top (\by - \bX{\lasso}) < 1.
\label{KKT-strict}
\end{equation}

\begin{assumption}
    \label{assum:bX}
    For all $\delta_1,...,\delta_p\in\{-1,1\}$ and $1\le j_0<j_1<\cdots<j_n\le p$, 
    $$
    \rank\begin{pmatrix} \bx_{j_0} & \bx_{j_1} & \cdots & \bx_{j_n} \cr 
    \delta_{j_0} & \delta_{j_1} & \cdots & \delta_{j_n} \end{pmatrix}_{(n+1)\times(n+1)} = n+1. 
    $$
\end{assumption}


\begin{proposition}\label{prop-4-1}
    If $\bX$ satisfies the above assumption 
    then the set $B=\{j\in[p]: |\bx_j^\top(\by - \bX{\lasso})| = \lambda n\}$
    is such that $\bX_B$ has rank $|B|$
    and the solution ${\lasso}$ to the optimization problem \eqref{lasso} is unique.
    Furthermore, if $\P\big[\bv^\top \bep = c\big]=0$ for all vectors $\bv\neq{\bf 0}$ and real $c$,
    then the KKT conditions of the Lasso $\lasso$ 
    hold strictly 
    with probability 1,
    i.e., \eqref{KKT-strict} holds with probability 1.
\end{proposition}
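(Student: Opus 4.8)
The plan is to separate the two deterministic claims---the rank of $\bX_B$ and the uniqueness of $\lasso$, which hold for every fixed $\by$ under the assumption---from the almost-sure strict KKT claim, which I will deduce from them. I would first record the standard fact (not requiring the assumption) that the fitted value $\bX\lasso$ is common to all minimizers, by strict convexity of $\bt\mapsto\|\bt-\by\|^2$ together with convexity of $\|\cdot\|_1$; consequently the residual $\br=\by-\bX\lasso$ and the equicorrelation set $B$ are well defined. For $j\in B$ write $\delta_j=\sgn(\bx_j^\top\br)\in\{-1,1\}$, so that $\bx_j^\top\br=\lambda n\,\delta_j$. Introducing $\bw=(\br^\top,-\lambda n)^\top\in\R^{n+1}$, which is nonzero since $\lambda n>0$, this identity reads $\langle(\bx_j,\delta_j),\bw\rangle=0$, so the augmented vectors $\{(\bx_j,\delta_j):j\in B\}$ all lie in the hyperplane $\bw^\perp$, of dimension $n$. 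Since the assumption makes any $n+1$ such augmented vectors (with any signs, in particular $\delta_j$) linearly independent in $\R^{n+1}$, they cannot all sit in an $n$-dimensional space once there are $n+1$ of them; this forces $|B|\le n$, and then extending $B$ to $n+1$ indices (possible in the relevant regime $p\ge n+1$) and restricting shows $\{(\bx_j,\delta_j):j\in B\}$ is itself linearly independent.

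The key step---and the one I expect to be the main obstacle---is to descend from independence of the augmented vectors to full column rank of $\bX_B$, i.e. to discard the extra sign coordinate without losing independence. I would do this by noting that the coordinate projection $\pi:(\bu,t)\mapsto\bu$ is injective on $\bw^\perp$: its kernel is $\{(\mathbf 0,t):t\in\R\}$, and $(\mathbf 0,t)\in\bw^\perp$ would give $-t\lambda n=0$, hence $t=0$. Applying $\pi$ to the independent family $\{(\bx_j,\delta_j)\}_{j\in B}\subseteq\bw^\perp$ therefore produces the independent family $\{\bx_j\}_{j\in B}$, so $\bX_B$ has rank $|B|$. Uniqueness of $\lasso$ then follows quickly: every minimizer is supported in $B$ by the KKT conditions and all minimizers share the fit $\bX\lasso$, so for two minimizers $\hbbeta^{(1)},\hbbeta^{(2)}$ we get $\bX_B(\hbbeta^{(1)}_B-\hbbeta^{(2)}_B)=\mathbf 0$, whence $\hbbeta^{(1)}=\hbbeta^{(2)}$ by the full column rank of $\bX_B$.

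For the strict KKT claim I would show $\P[B\neq\Shat]=0$; because $\Shat\subseteq B$ always and any $j\notin B$ satisfies $|\bx_j^\top\br|<\lambda n$ (from $|\bx_j^\top\br|\le\lambda n$ and $\neq\lambda n$), the event $\{B=\Shat\}$ coincides with \eqref{KKT-strict}. Suppose $j^*\in B\setminus\Shat$ and fix the (finitely many) configurations $S=\Shat$, $\bs=\sgn(\lasso_S)$ and $\delta^*=\sgn(\bx_{j^*}^\top\br)$. The stationarity condition on the support reads $\bX_S^\top(\by-\bX_S\lasso_S)=\lambda n\,\bs$, and since $S\subseteq B$ the matrix $\bX_S$ has full column rank, giving $\br=(\bI_n-\bP_S)\by+\lambda n\,\bX_S(\bX_S^\top\bX_S)^{-1}\bs$ with $\bP_S$ the orthogonal projection onto the column space of $\bX_S$. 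The membership $\bx_{j^*}^\top\br=\delta^*\lambda n$ then turns into an affine equation $\bv^\top\by=c$ with $\bv=(\bI_n-\bP_S)\bx_{j^*}$; because $S\cup\{j^*\}\subseteq B$ and $\bX_B$ has full column rank, $\bx_{j^*}\notin\mathrm{col}(\bX_S)$, so $\bv\neq\mathbf 0$. Substituting $\by=\bX\bbeta+\bep$ makes this $\bv^\top\bep=\text{const}$, an event of probability $0$ by hypothesis, and a finite union over all $(S,\bs,j^*,\delta^*)$ yields $\P[B\neq\Shat]=0$, which is the desired strict KKT.
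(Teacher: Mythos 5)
Your proof is correct and follows essentially the same route as the paper's: the rank of $\bX_B$ is deduced from \Cref{assum:bX} applied to the sign-augmented columns, uniqueness follows from the shared fit plus full column rank of $\bX_B$, and strict KKT is obtained by conditioning on the finitely many support/sign configurations, reducing each to an affine event $\bv^\top\bep=c$ with $\bv=(\bI_n-\bP_S)\bx_{j^*}\neq\mathbf 0$ (guaranteed by the rank claim), and taking a union bound. The only difference is cosmetic: where the paper derives a forbidden linear dependence among the augmented columns directly from a hypothesized dependence among $\{\bx_j\}_{j\in B}$, you run the argument forwards by placing the augmented columns in the hyperplane $\bw^\perp$ and noting that the coordinate projection is injective there---the same mechanism, cleanly repackaged.
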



Expositions of the results in the first part of the above proposition 
exist in the literature, see for instance \cite[Section 3]{zhang10-mc+} or 
\cite{tibshirani2012,tibshirani2013lasso}. 
Compared with previous versions of the condition on the design, 
\Cref{assum:bX}, which clearly holds with probability 1 when 
$\bX$ is the realization of a continuous distribution over $\R^{n\times p}$,   
gives a natural interpretation in terms of the rank of specific matrices.
The fact that the KKT conditions of the Lasso hold strictly with probability one
is known although it is difficult to pinpoint an existing result 
{in the form of \Cref{prop-4-1}} 
in the literature.
We provide a short proof in \Cref{sec:proof-prop-4-1} for completeness. 

Next define the function $f:\R^n\to\R^n$ by
\begin{equation}
f:\bep \to \bX ({\lasso} - \bbeta).
\label{def-f}
\end{equation}
Then the function $f$ is 1-Lipschitz
and this property holds true for all convex penalized Least-Squares
estimators \cite[Proposition 3]{bellec2016bounds}.
Consequently, almost everywhere the partial derivatives of $f$ exist
and $\nabla f(\bep)$ has operator norm at most one.
It is enough to compute the gradient of $f$ Lebesgue almost everywhere
and by the above {proposition,} the KKT conditions holds strictly
for almost every point $\bep_0\in\R^n$.

If the KKT conditions of the Lasso hold
strictly for $\bep_0$, then by Lipschitz continuity of $\bep\to\bX{\lasso}$
the KKT conditions also hold strictly in small enough nontrivial neighbourhood of $\bep_0$.
In this small neighborhood, the sign and support of ${\lasso}$ are unchanged
and we have for $\|\bh\|$ small enough
$$
\bX {\lasso}(\bep_0 + \bh) = 
\bX_{\Shat} (\bX_{\Shat}^\top \bX_{\Shat})^{-1}(
    \bX_{\Shat}^\top ( \bep_0 +{\bh} + \bX\bbeta) - \lambda n {\rm{sign}}({\lasso}({\bep_0}))
)
$$
where $\Shat$ denotes the locally constant support equal to the support of 
${\lasso}(\bep_0)$. 
In this neighbourhood 
the map $\bh\to \bX{\lasso}(\bep_0 + \bh)$ as well as the map $\bh\to\bX({\lasso}(\bep_0 + \bh) - \bbeta)$
are locally affine with linear part equal to the orthogonal projection
\bel{diff-Lasso}
\bP_{\Shat} = \bX_{\Shat} (\bX_{\Shat}^\top \bX_{\Shat})^{-1}\bX_{\Shat}^\top.
\eel
We conclude this calculation with the following lemma. 

\begin{proposition}\label{prop-diff-lasso} 
Let ${\lasso}$ be the Lasso estimator (\ref{lasso}) with data $(\bX,\by)$ satisfying 
$\by = \bX\bbeta+\bep$. Define $f(\bep) = \bX({\lasso}-\bbeta)$ as in (\ref{def-f}). 
Suppose Assumption \ref{assum:bX} holds and $\P\{\bv^\top\bep=c\}=0$ for all deterministic $\bv\in\R^n$ and real $c$.
Then almost surely
\bes
\nabla {\lasso} = \begin{pmatrix} (\bX_{\Shat}^\top \bX_{\Shat})^{-1}\bX_{\Shat}^\top 
\cr {\bf 0}_{\Shat^c \times n} \end{pmatrix}_{p\times n}
\ees
as well as
\bes
\nabla f(\bep) = \bP_{\Shat}\quad \hbox{ and }\quad 
\dv f(\bep) = \|\bP_{\Shat}\|_F^2 =  \trace \; \bP_{\Shat} = |\Shat|,
\ees
where $\Shat =\supp({\lasso})$ and $\bP_{\Shat}$ is as in (\ref{diff-Lasso}). 
\end{proposition}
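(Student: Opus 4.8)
The plan is to make rigorous the local-linearization argument sketched immediately before the statement, the only genuinely new input being the assembly of the pieces plus the rank and trace bookkeeping. First I would invoke \Cref{prop-4-1}: under \Cref{assum:bX} together with $\P\{\bv^\top\bep = c\}=0$, the KKT conditions hold strictly with probability one. Since $\bep\sim N({\bf 0},\sigma^2\bI_n)$ has a strictly positive density, ``almost surely'' coincides with ``Lebesgue almost everywhere,'' so it suffices to compute $\nabla f$ at every point where \eqref{KKT-strict} holds strictly. \Cref{prop-4-1} also gives that $\bX_B$ has full rank $|B|$ for $B=\{j:|\bx_j^\top(\by-\bX\lasso)|=\lambda n\}$; since $\Shat\subseteq B$ by the KKT equalities on the active set, $\bX_{\Shat}$ has full column rank $|\Shat|$, so $(\bX_{\Shat}^\top\bX_{\Shat})^{-1}$ exists. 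I will use this both to write the closed form of the active coefficients and, later, to identify $\trace\bP_{\Shat}$ with $|\Shat|$.

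Next, fix $\bep_0$ at which \eqref{KKT-strict} holds strictly. By the Lipschitz continuity of $\bep\mapsto\bX\lasso$ established just above, the strict inequalities in \eqref{KKT-strict} persist on a neighborhood of $\bep_0$, so that the support $\Shat$ and the sign vector $\sgn((\lasso)_{\Shat})$ are locally constant. On this neighborhood the stationarity equation restricted to $\Shat$ reads $\bX_{\Shat}^\top\big(\by - \bX_{\Shat}(\lasso)_{\Shat}\big) = \lambda n\,\sgn((\lasso)_{\Shat})$, which I solve to obtain $(\lasso)_{\Shat} = (\bX_{\Shat}^\top\bX_{\Shat})^{-1}\big(\bX_{\Shat}^\top\by - \lambda n\,\sgn((\lasso)_{\Shat})\big)$, while $(\lasso)_{\Shat^c}\equiv 0$. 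Differentiating in $\bep$, using $\by=\bX\bbeta+\bep$ (so $\partial\by/\partial\bep=\bI_n$) and the local constancy of the sign term, yields the stated block form of $\nabla\lasso$, with the inactive rows vanishing because $(\lasso)_{\Shat^c}$ is frozen at zero.

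The identity for $f$ then follows by the chain rule: writing $\bX=(\bX_{\Shat}\mid\bX_{\Shat^c})$ in block form, the Jacobian of $f(\bep)=\bX(\lasso-\bbeta)$ is $\bX\,\nabla\lasso = \bX_{\Shat}(\bX_{\Shat}^\top\bX_{\Shat})^{-1}\bX_{\Shat}^\top = \bP_{\Shat}$. Since $\bP_{\Shat}$ is symmetric, it agrees with $\nabla f$ in the transpose-Jacobian convention of \Cref{sec:notation}, giving $\nabla f(\bep)=\bP_{\Shat}$. As $\bP_{\Shat}$ is the orthogonal projection onto the $|\Shat|$-dimensional column space of $\bX_{\Shat}$, it is idempotent, so $\dv f = \trace(\nabla f) = \trace\bP_{\Shat} = \rank\bP_{\Shat} = |\Shat|$ and $\|\bP_{\Shat}\|_F^2 = \trace(\bP_{\Shat}^\top\bP_{\Shat}) = \trace(\bP_{\Shat}^2) = \trace\bP_{\Shat} = |\Shat|$. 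Finally, because $f$ is 1-Lipschitz, Rademacher's theorem guarantees that $f$ is differentiable Lebesgue-almost everywhere and that this pointwise derivative is its weak derivative; the computation above, valid at every strict-KKT point, therefore holds almost everywhere, which is exactly the sense required for the Second Order Stein applications. The step meriting the most care is the local-constancy claim: one must verify that it is the \emph{strict} feasibility of the inactive coordinates in \eqref{KKT-strict}, not merely the continuity of the fitted value $\bX\lasso$, that prevents any inactive coordinate from entering and any active coordinate from leaving within the neighborhood, so that $\Shat$ and the sign pattern are genuinely frozen and the differentiation is legitimate.
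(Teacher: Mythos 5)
Your proof is correct and follows essentially the same route as the paper, which establishes this proposition via the derivation immediately preceding its statement rather than a separate appendix proof: strict KKT almost surely from \Cref{prop-4-1}, local constancy of the support and sign vector, the closed-form active-set solution, the chain rule giving $\bP_{\Shat}$, and almost-everywhere differentiability from the $1$-Lipschitz property of $\bep\mapsto\bX\lasso$. Your added care about the transpose-Jacobian convention (immaterial here since $\bP_{\Shat}$ is symmetric) and about verifying that active coordinates cannot leave the support only tightens the same argument.
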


\subsubsection{
{Variance formula and relative stability}}
Using \Cref{prop:variance-divergence} we obtain the following result whose proof
is given in \Cref{proof:bound-sparsity-general}.

\begin{theorem}
    \label{thm:variance-size-model-lasso}
    Consider the linear model (\ref{LM-lasso}) and ${\lasso}$ in (\ref{lasso}),
    with deterministic design $\bX$ satisfying \Cref{assum:bX}, true target vector $\bbeta$
    and noise $\bep\sim N({\bf 0}, \sigma^2 \bI_n)$.
    Then the variance of the size of the selected support satisfies
    \bel{th-model-size-1}
        \Var[|\Shat|]
    & = & {\E[|\Shat|] + \E\big[\|\bP_{\Shat} \bep\|^2/\sigma^2\big] 
    + \Var(g(\bep))-\sigma^2\E[\|\nabla g(\bep)\|^2]} 
    \cr &\le& \E[|\Shat|] + \E\big[\|\bP_{\Shat} \bep\|^2/\sigma^2\big], 
    \eel
    {where $g(\bep) = \bep^\top\bX({\lasso}-\bbeta)/\sigma^2$.}
    Consequently, $\Var[|\Shat|] \le 2n$ as well as 
    \bel{th-model-size-2}
    \Var[|\Shat|] 
     &\le& 
       3\E[|\Shat|]
    + 4\E\Big[|\Shat|\log\Big(\frac{ep}{{1\vee}|\Shat|}\Big)\Big]
    \\ \nonumber &\le & 3\E[|\Shat|]+4\E[|{\Shat}|]\log\Big(\frac{ep}{{1\vee}\E[|\Shat|]}\Big).
    \eel
\end{theorem}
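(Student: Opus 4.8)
The starting point is \Cref{prop-diff-lasso}, which identifies $|\Shat|$ with $\dv f(\bep)$ for the $1$-Lipschitz map $f(\bep)=\bX(\lasso-\bbeta)$ and gives $\nabla f(\bep)=\bP_{\Shat}$ almost everywhere. Since $\bP_{\Shat}$ is an orthogonal projection, $\trace((\nabla f)^2)=\trace(\bP_{\Shat}^2)=\trace(\bP_{\Shat})=|\Shat|$ and $(\nabla f)\bep=\bP_{\Shat}\bep$. I would therefore apply the variance-of-divergence identity \eqref{prop:variance-divergence} of \Cref{thm:general-SOS} directly with this $f$. Writing $g(\bep)=\bep^\top f(\bep)/\sigma^2$ so that $\nabla g=(f+\bP_{\Shat}\bep)/\sigma^2$, the terms $\E[f^\top(\nabla f)\bep/\sigma^2]$ and $\Var[\bep^\top f/\sigma^2]$ appearing in \eqref{prop:variance-divergence} reorganize into exactly the right-hand side of the equality in \eqref{th-model-size-1}; this is a short algebraic rearrangement once the three identities above are in hand, in which the explicit $\E[\|\bP_{\Shat}\bep\|^2/\sigma^2]$ term cancels against part of $\sigma^2\E[\|\nabla g\|^2]$. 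The inequality in \eqref{th-model-size-1} is then immediate from $\tilde V(g)\le 0$, i.e.\ the single application of the Gaussian Poincar\'e inequality to $g$ already recorded after \Cref{thm:general-SOS}.

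For the bound $\Var[|\Shat|]\le 2n$, I would combine the inequality in \eqref{th-model-size-1} with the two elementary facts that $|\Shat|=\trace\bP_{\Shat}\le n$ (so $\E[|\Shat|]\le n$) and that $\bP_{\Shat}$ contracts, so that $\|\bP_{\Shat}\bep\|^2\le\|\bep\|^2$ and hence $\E[\|\bP_{\Shat}\bep\|^2/\sigma^2]\le\E[\|\bep\|^2/\sigma^2]=n$.

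The substantive step is \eqref{th-model-size-2}, which amounts to controlling the data-dependent projection term by $\E[\|\bP_{\Shat}\bep\|^2/\sigma^2]\le 2\E[|\Shat|]+4\E[|\Shat|\log(ep/(1\vee|\Shat|))]$; feeding this into \eqref{th-model-size-1} yields the first inequality with the stated constant $3$. My plan here is a peeling/union-bound argument: for each size $k\in\{0,\dots,n\}$ (recall $|\Shat|\le n$ since $\bX_{\Shat}$ has full column rank under \Cref{assum:bX}) and each fixed $S$ with $|S|=k$, $\|\bP_S\bep\|^2/\sigma^2$ is $\chi^2$ with at most $k$ degrees of freedom, so the Laurent--Massart tail together with a union bound over the $\binom{p}{k}\le(ep/k)^k$ choices of $S$ controls $Z_k=\max_{|S|=k}\|\bP_S\bep\|^2/\sigma^2$. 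Choosing the threshold $h(k)=2k+4k\log(ep/(1\vee k))$ and splitting $\E[\|\bP_{\Shat}\bep\|^2/\sigma^2]\le\E[h(|\Shat|)]+\sum_{k}\E[(Z_k-h(k))_+]$, the generous coefficient $4$ (versus the critical coefficient produced by $2\sqrt{k\log\binom pk}\le k+k\log(ep/k)$) leaves enough slack that each residual $\E[(Z_k-h(k))_+]$ decays geometrically in $k$ and the sum is absorbed. Finally, the second inequality in \eqref{th-model-size-2} is Jensen's inequality applied to the concave map $k\mapsto k\log(ep/(1\vee k))$ (whose second derivative is $-1/k<0$ for $k\ge1$, with a downward kink at $k=1$), giving $\E[|\Shat|\log(ep/(1\vee|\Shat|))]\le\E[|\Shat|]\log(ep/(1\vee\E[|\Shat|]))$.

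The main obstacle is precisely this last projection bound: because $\Shat$ is selected using the same noise $\bep$ that appears inside $\|\bP_{\Shat}\bep\|^2$, one cannot treat $\Shat$ as fixed, and the indicator $\mathbf 1\{|\Shat|=k\}$ is correlated with $Z_k$. The delicate point is to choose $h(k)$ generously enough that the correlated fluctuation terms $\sum_k\E[(Z_k-h(k))_+]$ sum to an absorbable quantity rather than the naive $O(n^{3/2})$, while keeping the leading constants at $2$ and $4$ so that the clean bound \eqref{th-model-size-2} with no additive remainder is obtained.
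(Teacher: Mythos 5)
Your proposal is correct and follows essentially the same route as the paper: the identity and the first inequality in \eqref{th-model-size-1} come from the variance-of-divergence formula \eqref{prop:variance-divergence} applied to $f(\bep)=\bX(\lasso-\bbeta)$ with $\nabla f(\bep)=\bP_{\Shat}$, the bound $\Var[|\Shat|]\le 2n$ from the same trivial estimates $|\Shat|\le n$ and $\|\bP_{\Shat}\bep\|\le\|\bep\|$, the main inequality from the Laurent--Massart $\chi^2$ tail plus a union bound over supports of each size, and the last line of \eqref{th-model-size-2} from concavity of $x\mapsto x\log(ep/(x\vee 1))$. The only difference is bookkeeping in the maximal inequality---the paper takes a single maximum over all supports $A$, integrates its exponential tail, and pays an additive $3\log(ep)+3$ that it folds into the coefficient $4$ in \eqref{inequality-bP-bep-any-Shat}, whereas you peel by support size and pay an additive constant $\sum_k\E[(Z_k-h(k))_+]$ that you absorb into the same coefficient slack---so the two arguments are interchangeable, including the mild looseness of that final absorption step when $\E[|\Shat|]$ is very small.
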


A significant feature of the above theorem, and \Cref{prop:variance-divergence} as well, 
is the requirement of no condition on the true $\bbeta$, the penalty level $\lam$ or  
the design matrix $\bX$ beyond \Cref{assum:bX}. 
In particular, the restricted eigenvalue condition is not required. 

Using $a/b+b/(a\vee 1)-2\le(a-b)^2/(b(a\vee 1))$, 
an implication of \Cref{thm:variance-size-model-lasso} 
is the confidence interval
\bel{confidence-interval-Shat}
\qquad
\P\bigg(
\frac{|\Shat|}{\E[|\Shat|]}+\frac{\E[|\Shat|]}{|\Shat|\vee 1}-2 
\le C_\alpha \Big(\frac{3}{|\Shat|\vee 1}+\frac{4\log(ep)}{|\Shat|\vee 1}\Big)
\bigg)\ge 1-\alpha
\eel
for $\E[|\Shat|]$ with conservative $C_\alpha =1/\alpha$,
although $\E[|\Shat|]$ is 
not a conventional parameter due to its dependence on the specific choice of $\hbbeta$. 

{A sequence} of non-negative random variables $(Z_q)_{q\ge 1}$ is said to be relatively stable if $Z_q/\E[Z_q]$ converges to 1 in probability.
A direct consequence of \Cref{thm:variance-size-model-lasso} is that the model size $|\Shat|$ is relatively stable
provided that $\E[|\Shat|]$ is not pathologically small.
If the setting and assumptions of \Cref{thm:variance-size-model-lasso} are fulfilled
then
$$\E\Big[ \Big( \frac{|\Shat|}{\E|\Shat|} - 1 \Big)^2\Big]
\le \frac{3}{\E|\Shat|} + \frac{4\log(ep/\E|\Shat|)}{\E|\Shat|}
\le \frac{3 + 4\log(ep)}{\E|\Shat|}.
$$
Consequently, if one considers a sequence of regression problems such that $\E[|\Shat|]/\log(ep) \to +\infty$,
then $|\Shat|/\E|\Shat|$ converges to 1 in $L_2$ and in probability. 
It is known that $\P\{|\Shat| \ge k\}\approx 1$ when $\bX$ has iid $N({\bf 0},\bSigma)$ rows 
with $(\bSigma)_{jj}=1$, $\bbeta={\bf 0}$ and $\lam = \lam(k) = (\sigma/n^{1/2})\{L(k/p)-1\}$ 
under a mild side condition on $\bSigma$, 
where $L(t)$ is defined by $\P\{N(0,1)>L(t)\} = t$ 
\cite{sun2013sparse}[Proposition 14(ii)]. 
As $|\Shat|$ is expected to be larger when $\bbeta\neq {\bf 0}$, it would be reasonable 
to expect $\E[|\Shat|]\gg \log p$ when $\lam = \lam(k_n)$ with $k_n\gg \log p$. 

An informative benchmark to study the tightness of inequality 
    \eqref{th-model-size-2}
for the support of the Lasso is the case $\bX=\sqrt n \bI_p$ which reduces to the
Gaussian sequence model. Then $\lasso$ is the soft-thresholding operator
and $|\Shat|$ is the sum of $p$ iid Bernoulli
random variables with parameters $q_1,...,q_p\in(0,1)$ and 
$\Var[|\Shat|] = \sum_{j=1}^p q_j(1-q_j)$.
Under mild assumption on the probabilities $q_j$ (e.g., $q_j\le 1/2$ for all $j$),
the variance
$\Var[|\Shat|]$ is of the same order
as $\E |\Shat|$.
Hence the bound \eqref{th-model-size-2} is sharp up to a logarithmic factor.

\subsubsection{
{Linearity of the variance in the true model size}}
From (\ref{th-model-size-2}),
we can also obtain more explicit bounds on the variance of $|\Shat|$
by bounding from above $\E[|\Shat|]$. 
We provide below upper bounds on $\E[|\Shat|]$ under two assumptions on $\bX$:
the Sparse Riesz Condition (SRC) \cite{ZhangH08,zhang10-mc+}
and the Restricted Eigenvalue (RE) condition \cite{bickel2009simultaneous}.
Under both conditions, if the tuning parameter
 of the Lasso is large enough
then the squared risk $\|\bX({\lasso} - \bbeta)\|^2/n$
is bounded from above by $C(\bX)s_0\lambda^2$ 
with high probability \cite{ZhangH08,bickel2009simultaneous,zhang10-mc+}
and in expectation \cite{ye2010rate,bellec2016slope,bellec2016bounds},
where $C(\bX)$ is a multiplicative constant that depends on $\bX$.
We refer the reader to the books
\cite{buhlmann2011statistics,giraud2012high,hastie2015statistical}
and the references therein
for surveys of existing results.
Throughout the rest of this section,
denote by $s_0=\|\bbeta\|_0$ the number of nonzero coefficients, or sparsity, of 
the unknown coefficient vector $\bbeta$.

The Sparse Riesz Condition (SRC) \cite{ZhangH08,zhang10-mc+} on the design $\bX$ holds 
if for certain {$\eta\in (0,1)$ and nonnegative reals $\{\epsilon_1,\epsilon_2\}$ and integers $\{m,k\}$,} 
\begin{equation}
{|S| + \epsilon_2k \le 
\min_{B\subset[p]: |B \setminus S| \le m} \frac{2(1-\eta)^2m - \epsilon_1k}
{\phi_{\rm cond}\big(\bX_B^\top\bX_B\big)-1},}
\label{SRC}
\end{equation}
where $S$ denotes the support of $\bbeta$ 
{and} $\phi_{\rm cond}(\cdot)$ denotes the condition number.
Let $\phi(d) {\ge} \max\{\phi_{\rm cond}(\bX_B^\top\bX_B): |B|=d\}$
be an upper bound for the $d$-sparse condition number 
of the Gram matrix $\bX^\top\bX/n$. Given $d$ and {$\{\eta,\epsilon_1,\epsilon_2\}$}, 
the SRC can be viewed as a sparsity condition on $\bbeta$ as it 
holds with $k=\|\bbeta\|_0$ 
{when 
$\|\bbeta\|_0\le 2d (1-\eta)^2/\{(1+\epsilon_2)(\phi(d)-1)+2(1-\eta)^2+\epsilon_1\}$. 
In particular, the SRC holds under the RIP condition $\delta_{2s_0} < 1/2$ or $\delta_{3s_0} < 2/3$  
for sufficiently small $\{\eta,\epsilon_1,\epsilon_2\}$ 
where $\delta_{k}$ is the maximum spectrum norm of $\bX_B^\top\bX_B/n-\bI_B$ over $|B|\le k$.} 
When $\bX$ has iid $N({\bf 0},\bSigma)$
rows, we may take $d = a_1 {n /\log p}$
such that $\phi(d) = (1+a_2)\phi_{{\rm cond}}(\bSigma)$
is a valid upper bound for the $d$-sparse condition number of $\bX^\top\bX/n$ 
with probability $1-e^{-a_2n}$ for some small positive constants $a_1$ and $a_2$ \cite{ZhangH08}.  

Note that the original SRC in \cite{ZhangH08} is stated in terms
of ratio of maximal and minimal sparse eigenvalues instead
of sparse condition number as in \eqref{SRC}.
A common feature on the works on the SRC \cite{ZhangH08,zhang10-mc+}
is that $|\Shat|\lesssim s_0$ with large probability (up to constants depending
on the SRC constants and the tuning parameter).
We obtain $\E|\Shat| \lesssim s_0$ as a consequence,
provided that $\P(|\Shat|\lesssim s_0)$ is large enough.

\begin{proposition}
    \label{proposition:SRC-upperbound-expected-sparsity} 
    {Let $\bep\sim N({\bf 0}, \sigma^2 \bI_n)$ and $s_0=\|\bbeta\|_0$ in  
    the linear model (\ref{LM-lasso}). Let the tuning parameter of the Lasso \eqref{lasso} satisfy 
    \begin{equation}
        \label{def-lambda-SRC}
        \lambda\ge (\sigma/\eta)\sqrt{ (1+\tau)(2/n)\log(p/k)}\}
    \end{equation}
    with $\eta\in (0,1)$. 
    Assume that $\bX$ is deterministic with $\max_{j\in[p]}\|\bX\be_j\|^2/n \le 1$ and 
    that \eqref{SRC} holds with $\eps_1 = 8(1-\eta)/\{\sqrt{2\pi}t_0\}$, 
    $\eps_2 = 4/\{t_0^2(2\pi t_0^2+4)^{1/2}\}$ and some nonnegative integers $\{m,k\}$, 
    where $t_0=\eta^{-1}\sqrt{2\log(p/k)}$. 
     Then, $\P(|\Shat\setminus S| < m)\ge 1- (k/p)^\tau/(1+(\eta t_0)^2)$ 
    and $\E[|\Shat|]\le s_0+m + p (k/p)^\tau/(1+(\eta t_0)^2)$.
    Consequently, if $\tau\ge 1$ then $\E|\Shat| \le s_0+ m +k/(1+(\eta t_0)^2)$.} 
\end{proposition}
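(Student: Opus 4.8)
The plan is to first reduce the expectation bound to the stated high-probability bound, and then to establish the latter from the Lasso KKT conditions together with the SRC \eqref{SRC}. For the reduction, note that $|\Shat| = |\Shat\cap S| + |\Shat\setminus S| \le s_0 + |\Shat\setminus S|$ always, while $|\Shat|\le p$ is a trivial deterministic bound (one could use $|\Shat|\le n$ from \Cref{prop-4-1}, but $p$ suffices). Splitting the expectation over $\{|\Shat\setminus S|<m\}$ and its complement gives
\begin{equation*}
\E[|\Shat|] \le (s_0+m)\,\P(|\Shat\setminus S|<m) + p\,\P(|\Shat\setminus S|\ge m)\le s_0+m+p\,\P(|\Shat\setminus S|\ge m),
\end{equation*}
so the whole statement follows once we show $\P(|\Shat\setminus S|\ge m)\le (k/p)^\tau/(1+(\eta t_0)^2)$; the simplification for $\tau\ge 1$ is then immediate from $p(k/p)^\tau = k(k/p)^{\tau-1}\le k$, using $k\le p$.

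For the probability bound I would work from the KKT conditions. Writing $\bu=\by-\bX{\lasso}$ for the residual, the KKT conditions give $|\bx_j^\top\bu|/(n\lambda)=1$ for every $j\in\Shat$, in particular for every false positive $j\in T:=\Shat\setminus S$. Decomposing $\bu = \bep - \bX({\lasso}-\bbeta)$ and using $\max_j\|\bx_j\|^2/n\le 1$, the triangle inequality yields, whenever the noise correlation satisfies $|\bx_j^\top\bep|/n \le \eta\lambda$, the lower bound $|\bx_j^\top\bX({\lasso}-\bbeta)|/n \ge (1-\eta)\lambda$ for each $j\in T$; this is where the factor $(1-\eta)$ in \eqref{SRC} originates. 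Squaring and summing over any subset $T'\subseteq T$ with $|T'|=m$ (available when $|T|\ge m$) gives $m(1-\eta)^2\lambda^2\le \|\bX_{T'}^\top\bX({\lasso}-\bbeta)/n\|^2$, and bounding the right-hand side through the prediction error $\|\bX({\lasso}-\bbeta)\|^2/n$ (controlled under the SRC by a multiple of $s_0\lambda^2$) while isolating the off-support contribution governed by $\phi_{\rm cond}(\bX_B^\top\bX_B)-1$, $B=S\cup T'$, produces an inequality of the shape $2(1-\eta)^2 m \le (|S|+\cdots)(\phi_{\rm cond}(\bX_B^\top\bX_B)-1)+\cdots$. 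Comparing this with \eqref{SRC} forces a contradiction, so that $|\Shat\setminus S|<m$.

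The remaining step is the Gaussian control of the bad event. Each $\bx_j^\top\bep/n$ is centered Gaussian with variance at most $\sigma^2/n$, and the choice of $\lambda$ in \eqref{def-lambda-SRC} makes the standardized threshold at least $\sqrt{1+\tau}\,\eta t_0$ with $\eta t_0=\sqrt{2\log(p/k)}$, so the per-column tail is of order $(k/p)^{1+\tau}$. The constants $\eps_1,\eps_2$ in \eqref{SRC} are precisely the expected per-coordinate contributions of the truncated Gaussian noise, so that the probability bound is obtained by combining the deterministic SRC inequality with these truncated-moment estimates rather than a crude union bound over all $p$ columns (the latter would lose a spurious factor of order $k$); the denominator $1+(\eta t_0)^2=1+2\log(p/k)$ reflects the Mills-ratio correction in the Gaussian tail.

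The main obstacle is the deterministic implication of the second paragraph: translating the per-false-positive lower bound $(1-\eta)\lambda$ into the condition-number inequality \eqref{SRC} requires carefully relating $\|\bX_{T'}^\top\bX({\lasso}-\bbeta)\|^2$ to both the prediction error and the sparse condition number, while keeping the noise corrections at the level of expectations so that the final step reduces to a single deviation estimate. Matching this bookkeeping to the exact constants $\eps_1=8(1-\eta)/\{\sqrt{2\pi}t_0\}$ and $\eps_2=4/\{t_0^2(2\pi t_0^2+4)^{1/2}\}$ is the delicate part, and is where I expect the argument to be most technical; the reduction in the first paragraph and the $\tau\ge 1$ simplification are then routine.
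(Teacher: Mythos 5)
Your first paragraph (the reduction $\E[|\Shat|]\le s_0+m+p\,\P(|\Shat\setminus S|\ge m)$ and the $\tau\ge1$ simplification) and your closing observation that the probability must come from truncated-moment estimates via Markov rather than a union bound are exactly the paper's strategy. The gap is that the deterministic core of the argument --- the implication ``noise small $+$ SRC $\Rightarrow |\Shat\setminus S|<m$'' --- is left as a shape-of-the-argument sketch, and the sketch you give is not the one that delivers the stated constants or probability. Your second paragraph conditions on the per-column event $|\bx_j^\top\bep|/n\le\eta\lambda$ for the false positives and applies a triangle inequality; an event of that form can only be controlled by a union bound over columns, which is precisely what your third paragraph says must be avoided. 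The paper resolves this tension by formulating the deterministic statement (\Cref{lemma:modified-SRC-lemma}) directly in terms of the \emph{aggregate} quantities $4(1-\eta)\|(|\bg|-\eta)_+\|_1$ and $C_0\|(|\bg|-1)_+\|_2^2$, where $\bg=(n\lambda)^{-1}\bX^\top\bP_S^\perp\bep$ is built from the noise projected off the column span of $\bX_S$ (so that $\bg_S={\bf 0}$, which is used in the lemma via $\|\bv_S\|_2^2=|S|$). A single Markov inequality applied to the sum of the expectations of these two aggregates, computed from \Cref{lemma:calculation-RE}, then gives $\P(\Omega^c)\le(k/p)^\tau/(1+(\eta t_0)^2)$ with $\eps_1,\eps_2$ appearing as exact normalizations. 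Your plan never commits to an event of this form, so the probability bound you want in paragraph three cannot be extracted from the deterministic statement you outline in paragraph two.

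Two further points where the sketch would fail as written. First, you propose to bound $\|\bX_{T'}^\top\bX(\lasso-\bbeta)/n\|^2$ ``through the prediction error $\|\bX(\lasso-\bbeta)\|^2/n$ (controlled under the SRC by a multiple of $s_0\lambda^2$)''; importing an external prediction-error rate theorem creates a circularity risk (those rates are themselves typically proved together with the model-size bound) and, more importantly, there is no reason its constants would reproduce \eqref{SRC} with the specific $\eps_1=8(1-\eta)/\{\sqrt{2\pi}t_0\}$ and $\eps_2=4/\{t_0^2(2\pi t_0^2+4)^{1/2}\}$. The paper instead runs a self-contained algebraic argument comparing $\lasso$ to the oracle least-squares fit $\bP_S\by$, with the correction vector $w_j=I\{j\in B_1\}s_j(s_jg_j-1)_+$ absorbing the noise excesses. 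Second, the SRC only controls $\phi_{\rm cond}(\bX_B^\top\bX_B)$ for $|B\setminus S|\le m$, and the paper's conclusion requires the continuity argument that $|B_1|$ changes one at a time along the Lasso path (so that ``$|B_1|\le m$ implies $|B_1|<m$'' propagates down from $\lambda=\infty$); your device of selecting a subset $T'$ of size exactly $m$ addresses the condition-number domain but does not by itself rule out the solution jumping past $m$ false positives, and this step is absent from the proposal.
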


The proof is given in \Cref{sec:proof-upperbound-expected-sparsity-SRC}.
In particular if $\tau\ge 1$ and $s_0\approx m$ and $k\lesssim s_0$ then the SRC
grants $\E|\Shat| \lesssim s_0$.
The expectation $\E[|\Shat|]$ can also be bounded from above if 
both \Cref{assum:bX} and
the Restricted Eigenvalue (RE) condition \cite{bickel2009simultaneous} hold.


\begin{theorem}
    \label{thm:upperbound-expected-sparsity-RE}
    Consider the linear model (\ref{LM-lasso}) with $\bep\sim N({\bf 0}, \sigma^2 \bI_n)$ 
    and $s_0=\|\bbeta\|_0$.  
    Let $\tau,\gamma>0$, $\omega = \sigma(1+\tau)/\sqrt{n}$ and 
    $\lasso$ be as in (\ref{lasso}) with
    \begin{equation}
        \label{lambda-RE}
        \lambda=\sigma(1+\tau)(1+\gamma)\sqrt{(2/n)\log(ep/(s_0\vee 1))}.
    \end{equation}
    Assume that the columns of $\bX$ are normalized such that
    $\max_{j\in [p]}\|\bX\be_j\|^2\le n$. 
    Let $\Shat$ be the support of ${\lasso}$. 
    Then the Lasso satisfies
    $$
    \E\left[
    2\tau\bep^\top\bX({\lasso}-\bbeta)
    +
    \|\bX({\lasso} - \bbeta)\|^2
    \right]/n
    \le
    \frac{s_0(\lambda^2 + \omega^2)+(s_0\vee 1)\omega^2}{\RE^2(S,c_0)} + \frac{\omega^2}{2},
    $$
    where $\RE(S,c_0) = \inf_{\bu\in\R^p: \|\bu_{S^c}\|_1 \le c_0\sqrt{s_0\vee 1}\|\bu\|}
    (n^{-1/2}\|\bX\bu\| / \|\bu\|)$ and $S$ is the support of $\bbeta$, 
    provided that $c_0\ge \gamma^{-1}\sqrt{2(1+2\omega^2/\lam^2)}$, e.g. $c_0= 2/\gamma$. 
    If in addition \Cref{assum:bX} holds, then 
    \bel{bound-expceted-sparsity-RE} 
    && \E\Big[|\Shat| + \|\bX(\lasso - \bbeta)\|^2\big/(2\sigma^2\tau) \Big] 
    \\ \nonumber &\le& (\sqrt{\tau} +1/\sqrt{\tau})^2
    \left[
        \frac{(1+\gamma)^2\big\{s_0\log(ep/(s_0\vee 1)) + (s_0\vee 1)\big\}}
        { \RE^2(S,2/\gamma)} + \frac 1 4
    \right].
\eel
    \end{theorem}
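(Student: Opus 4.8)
The plan is to prove the first displayed inequality (a prediction-error bound for the Lasso) by a refined Lasso analysis that does not use the Second Order Stein formula, and then to deduce \eqref{bound-expceted-sparsity-RE} from it via first-order Stein. The bridge is the identity $\E[\bep^\top\bX(\lasso-\bbeta)]=\sigma^2\E[|\Shat|]$: because $f(\bep)=\bX(\lasso-\bbeta)$ is $1$-Lipschitz its gradient is integrable, so Stein's formula \eqref{stein-first-order-formula} applied to $\bz=\bep/\sigma$ gives $\E[\bep^\top f(\bep)]=\sigma^2\E[\dv f(\bep)]$, and under \Cref{assum:bX} the identity $\dv f(\bep)=|\Shat|$ of \Cref{prop-diff-lasso} holds almost surely. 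Multiplying the first inequality by $n/(2\sigma^2\tau)$ and substituting $\E[\bep^\top\bX(\lasso-\bbeta)]/\sigma^2=\E[|\Shat|]$ then turns its left-hand side into exactly $\E[|\Shat|+\|\bX(\lasso-\bbeta)\|^2/(2\sigma^2\tau)]$, the left-hand side of \eqref{bound-expceted-sparsity-RE}.

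For the first inequality I would set $\bh=\lasso-\bbeta$, $S=\supp(\bbeta)$, and $Q=2\tau\bep^\top\bX\bh/n+\|\bX\bh\|^2/n$, and start from the optimality of $\lasso$, which after expanding $\by=\bX\bbeta+\bep$ gives the basic inequality
\begin{equation*}
\|\bX\bh\|^2/(2n)+\lambda\|\bh_{S^c}\|_1\le \bep^\top\bX\bh/n+\lambda\|\bh_S\|_1 .
\end{equation*}
Doubling this and adding $2\tau\bep^\top\bX\bh/n$ isolates $Q$ on the left and collects the noise into $2(1+\tau)\bep^\top\bX\bh/n$, which is precisely why $\lambda$ and $\omega=\sigma(1+\tau)/\sqrt n$ both carry the factor $(1+\tau)$. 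Splitting this noise along $S$ and $S^c$, I would bound the $S^c$ contribution by $2(1+\tau)\big(\max_{j\notin S}|\bep^\top\bx_j|/n\big)\|\bh_{S^c}\|_1$ and absorb it into $2\lambda\|\bh_{S^c}\|_1$, the slack factor $(1+\gamma)$ in the choice \eqref{lambda-RE} of $\lambda$ being spent here. The original basic inequality, after dropping the nonnegative $\|\bX\bh\|^2/(2n)$ and performing the same absorption, yields the cone inequality $\|\bh_{S^c}\|_1\le c_0\sqrt{s_0\vee1}\,\|\bh\|$ for the stated threshold $c_0\ge\gamma^{-1}\sqrt{2(1+2\omega^2/\lambda^2)}$, the $\omega^2/\lambda^2$ correction accounting for the $S$-noise term.

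Once $\bh$ lies in the cone defining $\RE(S,c_0)$ I would use $\|\bh\|\le\|\bX\bh\|/(\sqrt n\,\RE(S,c_0))$ together with $\|\bh_S\|_1\le\sqrt{s_0}\,\|\bh\|$ and Cauchy--Schwarz to bound the remaining right-hand side by $[2(1+\tau)\|(\bep^\top\bx_j/n)_{j\in S}\|+2\lambda\sqrt{s_0}]\,\|\bX\bh\|/(\sqrt n\,\RE(S,c_0))$. Controlling $\|\bX\bh\|/\sqrt n$ by the standard prediction bound obtained the same way from the non-augmented inequality, and combining the two estimates through the arithmetic--geometric mean inequality, closes the bound on $Q$. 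Taking expectations and using $\E[(\bep^\top\bx_j/n)^2]\le\sigma^2/n$, which follows from the normalization $\|\bx_j\|^2\le n$, turns the $S$-noise norm into $s_0\sigma^2/n$; after collecting the $(1+\tau)^2$ factors into $\omega^2$ and handling the cross terms via $\lambda\omega\le(\lambda^2+\omega^2)/2$, the right-hand side assumes the claimed form $[s_0(\lambda^2+\omega^2)+(s_0\vee1)\omega^2]/\RE^2(S,c_0)+\omega^2/2$.

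It then remains to carry out the final bookkeeping: apply the conversion of the first paragraph and simplify with $(1+\tau)^2/\tau=(\sqrt\tau+1/\sqrt\tau)^2$, using $n\omega^2/(2\sigma^2\tau)=\tfrac12(\sqrt\tau+1/\sqrt\tau)^2$ and $n\lambda^2/(2\sigma^2\tau)=(\sqrt\tau+1/\sqrt\tau)^2(1+\gamma)^2\log(ep/(s_0\vee1))$, after which the two $\omega^2$ numerator terms are absorbed into $(1+\gamma)^2(s_0\vee1)$ to give \eqref{bound-expceted-sparsity-RE} with $c_0=2/\gamma$. I expect the main obstacle to be the in-expectation control of the $S^c$-noise at the sparsity-adaptive level: the level in \eqref{lambda-RE} is of order $\sigma\sqrt{\log(ep/(s_0\vee1))/n}$ rather than $\sigma\sqrt{\log(p)/n}$, so a naive union bound over all $p-s_0$ inactive coordinates is too crude, and one must instead control $\max_{j\notin S}|\bep^\top\bx_j|/n$ against $\lambda$ through a sharper sorted or peeling argument on the noise (as in \cite{bellec2016bounds,bellec2016slope}), and verify that the event on which this control fails contributes negligibly once expectations are taken. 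The constants in \eqref{lambda-RE} and in the threshold for $c_0$ are calibrated precisely so that both the absorption and the cone inequality survive after integration.
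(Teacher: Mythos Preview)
Your reduction of \eqref{bound-expceted-sparsity-RE} to the first displayed inequality via Stein's formula and \Cref{prop-diff-lasso} is correct and matches the paper, and your final bookkeeping with $(1+\tau)^2/\tau=(\sqrt\tau+1/\sqrt\tau)^2$ is accurate.

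The gap is in your proof of the first inequality. You propose to bound the $S^c$ noise by $\max_{j\notin S}|\bep^\top\bx_j|/n$ and absorb it into $\lambda\|\bh_{S^c}\|_1$, and you correctly flag that this fails in expectation at the adaptive level $\lambda\asymp\sigma\sqrt{\log(ep/s_0)/n}$: the maximum over $p-s_0$ coordinates is of order $\sigma\sqrt{\log p/n}$, so the absorption event has probability bounded away from one and the complement contributes nontrivially. You gesture at a sorted or peeling fix but do not supply one, and you do not say how the off-cone event is controlled in expectation. As written, the argument does not close.

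The paper resolves this differently. It does not pass through a maximum. Setting $\bg=(1+\tau)\bX^\top\bep/n$ and $\mu=\lambda/(1+\gamma)$, it defines two suprema: over the cone $T=\{\bu:\|\bu_{S^c}\|_1<c_0\sqrt{s_0\vee1}\,\|\bu\|\}$ the quantity $f_{\lambda,\mu}(\bep)=\sup_{\bu\in T}(\bg^\top\bu-\mu\|\bu_{S^c}\|_1-\lambda\sgn(\bbeta_S)^\top\bu_S)_+/\|\bu\|$, and over the complement a similar $g_{\lambda,\mu}$ normalized instead by $\|\bX\bu\|/\sqrt n$ with an extra penalty $\gamma\lambda c_0\sqrt{s_0\vee1}\,\|\bu\|$. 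The KKT inequality plus $2ab-b^2\le a^2$ gives $(2\tau\bep^\top\bX\bh+\|\bX\bh\|^2)/n\le I_{\{\bh\in T\}}f_{\lambda,\mu}^2/\RE^2+I_{\{\bh\notin T\}}g_{\lambda,\mu}^2$. The first supremum evaluates exactly to $\sum_{j\in S}(g_j-\lambda\sgn\beta_j)^2+\sum_{j\notin S}(|g_j|-\mu)_+^2$, whose expectation is bounded termwise using the sharp Gaussian tail-moment bound of \Cref{lemma:calculation-RE}: each off-support summand contributes at most order $\omega^2(s_0\vee1)/p$, so the full sum is at most $s_0(\lambda^2+\omega^2)+(s_0\vee1)\omega^2$. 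This is where the adaptive level is reached---no union bound, just summing soft-thresholded second moments. For the off-cone piece, the paper shows $g_{\lambda,\mu}$ is $(1+\tau)/\sqrt n$-Lipschitz in $\bep$ and has median zero (because $f_{\lambda,\mu}^2\le2\E f_{\lambda,\mu}^2$ with probability $\ge1/2$ forces $g_{\lambda,\mu}=0$ on that event, the constant $c_0$ being calibrated exactly for this), so Gaussian concentration gives $\E[g_{\lambda,\mu}^2]\le\omega^2/2$, which is the source of the $+\omega^2/2$ term you could not otherwise account for.
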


The proof of \Cref{thm:upperbound-expected-sparsity-RE} is given
in \Cref{sec:proof-upperbound-expected-sparsity-RE}.
The Gaussian concentration theorem is used in
\cite{bellec2016bounds,bellec2016slope}
to obtain bounds on $\E[\|\bX({\lasso}-\bbeta)\|^2]$
as well as higher order moments of the squared risk;
similar arguments are used to derive \Cref{thm:upperbound-expected-sparsity-RE}.
If $\bX$ satisfies \Cref{assum:bX} then 
$\E\big[\bep^\top\bX({\lasso}-\bbeta)\big] = \E\big[|\Shat|\big]$, 
so that the argument leads to \eqref{bound-expceted-sparsity-RE}.  
Informally, this implies $\E[|\Shat|] \lesssim 1 + s_0\log(ep/(s_0\vee1))$ 
up to a multiplicative constant
that depends only on $\gamma,\tau$ and the restricted eigenvalue.
To our knowledge, this bound on the size of the model selected by the Lasso under the RE condition is new.
Previous upper bounds of the form $|\Shat|\lesssim s_0$ require
that both maximal and minimal sparse eigenvalues of $\bX^\top\bX/n$ are
bounded away from $0$ and $+\infty$,  
cf. \Cref{proposition:SRC-upperbound-expected-sparsity} above or 
\cite{ZhangH08}, \cite[Lemma 1]{zhang10-mc+}, \cite[(7.9)]{bickel2009simultaneous}, \cite[Theorem 3]{belloni2014} among others.
The major difference between such conditions and the RE condition is that the RE condition
does not require any bounds on the maximal sparse eigenvalues of $\bX^\top\bX/n$.
Inequality \eqref{bound-expceted-sparsity-RE} reveals that the RE condition is sufficient
to control $\E[|\Shat|]$ by $s_0$ times a logarithmic factor.
Under the RE condition, assumptions on the maximal sparse eigenvalues of $\bX^\top\bX/n$ are unnecessary to control $\E[|\Shat|]$.

The above bounds on $\E|\Shat|$ under the SRC or the RE condition yield
the following on the variance of $|\Shat|$ 
in virtue of \eqref{th-model-size-2}.
If \Cref{assum:bX} holds,

\begin{enumerate}
    \item If $\lambda$ is as in \eqref{lambda-RE} for some $\gamma,\tau>0$  and
$\max_{j=1,...,p}\|\bX\be_j\|^2 \le n$ then
$$\Var[|\Shat|] 
\le \frac{(s_0 \vee 1) C }{\RE^2(S,2/\gamma)} \log\Big(\frac{ep}{1\vee s_0}\Big)^2 . 
$$
where $C=C(\gamma,\tau)>0$ 
only depends on $\gamma,\tau$.

\item If $\bX$ satisfies the SRC \eqref{SRC}
    for some $\eta>0$ and {$0\le m,k \le p$,} 
    with $\lambda$ is as in \eqref{def-lambda-SRC} with {$\tau\ge 1$} then
    for some absolute constant $C'>0$,
    $$\Var[|\Shat|]
    \le
    C'\Big(
        {(s_0+m) \log\Big(\frac{ep}{s_0+m}}\Big)
    \Big).$$
\end{enumerate}
In other words, under the RE condition
or the SRC with {$m\approx s_0$} 
the standard deviation of the size of the model $|\Shat|$
is smaller than $\sqrt{s_0}$ up to logarithmic factors.
The bound is sharper under the SRC by a logarithmic factor.  


\subsubsection{Variance of degrees-of-freedom of penalized estimators}
Some techniques above are not specific to the Lasso.
For instance, for any estimator defined as the solution of a convex optimization
problem of the form given in
\Cref{remark:1-lipschitz-positive-symmetric-penalized-estimators},
the map
$f:\bep \to \bX(\hbbeta-\bbeta)$ is 1-Lipschitz and satisfies
\bel{eq-4-13}
    \E[(\sigma^2 \df - \bep^\top f(\bep))^2] 
         &\le& \sigma^2\E \|\bX(\hbbeta - \bbeta)\|^2 + \sigma^4 \E \|\nabla f(\bep)\|_F^2
    \cr  &\le& \sigma^2\E \|\bX(\hbbeta - \bbeta)\|^2 + \sigma^4 n
\eel
by \Cref{thm:second-order-stein}
where $\df = \dv f(\bep)$.
Similarly, by \Cref{prop:variance-divergence} we have
\bel{var-div-gen-betahat}
\Var[\df]& \le
    & \E \|\nabla f(\bep)\|_F^2 + \E \|\nabla f(\bep) \bep\|^2/\sigma^2
    \cr 
    & \le& 2n.
\eel

\subsection{SURE for SURE in high-dimensional linear regression}\label{sec:SOSforLassoEnet}
\ 
Again we consider linear regression 
with deterministic design $\bX\in\R^{n\times p}$.
With the notation of \Cref{sec:sure-of-sure},
consider the sequence model $\by = \bmu + \bep$ where $\bep\sim N({\bf 0}, \sigma^2\bI_n)$ 
and the unknown mean is
$\bmu=\bX\bbeta$, as in the linear model \eqref{LM-lasso}.

\subsubsection{Lasso}
\label{subsec:SURE-for_SURE-Lasso}
Set $\hbmu(\by)=\bX{\lasso}$ with 
the Lasso estimator \eqref{lasso}.
We have derived in the previous section the gradient of $\bep\to\bX{\lasso}$ almost everywhere
under \Cref{assum:bX}. It is instructive to use these calculations
to make explicit SURE for SURE from \Cref{sec:sure-of-sure} in the Lasso case.
Under \Cref{assum:bX}, 
$|\Shat| = \dv \hbmu = \trace((\nabla \hbmu)^2)$ by \Cref{prop-diff-lasso}, so that 
Stein's Unbiased Risk Estimate is
\bel{sure-lasso}
\SURE = \|\by - \bX{\lasso}\|^2 + \sigma^2(2|\Shat| -n)
\eel
as in \eqref{SURE-sigma}. Moreover, by 
\Cref{thm:sure-for-sure}, SURE for SURE in the Lasso case is 
\begin{align}
    {\Rhat_\tSURE} =  4\sigma^2 \|\by - \bX{\lasso}\|^2 +\sigma^4(4|\Shat| - 2n)
\label{SURE_for_SURE_Lasso}
\end{align}
which is an unbiased estimator of $R_\tSURE =\E[(\SURE - \|\bX({\lasso}-\bbeta)\|^2)^2]$. 
The identity $\E[\Rhat_\tSURE] = R_\tSURE$ for the Lasso appeared previously in \cite{dossal2013degrees}. 

    
    As $\nabla (\bX\lasso) =\bP_{\Shat}$ is a random projection, 
    \Cref{th-consistency} applies with $\Rhat'_{\tSURE}=\Rhat_{\tSURE}$, 
    so that SURE for SURE is consistent. 
    We explicitly state the consequences of this result in the following proposition. 

    \begin{proposition}
        \label{prop:consistency-SURE-for-SURE-Lasso}
    Consider the sequence model $\by = \bmu + \bep$
    where $\bep\sim N({\bf 0}, \sigma^2\bI_n)$ 
    and let $\bX\in\R^{n\times p}$ satisfy \Cref{assum:bX}.
    Let $\lasso$ be the Lasso \eqref{lasso}.
    Consider SURE in \eqref{sure-lasso}
    and SURE for SURE in \eqref{SURE_for_SURE_Lasso}.
    Then for any tuning parameter $\lambda\ge0$, the following holds:

\begin{enumerate}
\item {$\Var(\SURE) = \E\big[\big(\SURE-\E\big[\|\by-\bX\lasso\|^2\big]\big)^2\big] 
\le \E[\hat R_{\tSURE}] + \sigma^4n$.} 
\item
    $\E[\hat R_{\tSURE}] = 2\sigma^2\E[\|\by-\bX\lasso\|^2 + \SURE] \ge n\sigma^4$.
\item The self-bounding property
    $\Var[\hat R_{\tSURE}] \le 16\sigma^4\E[\hat R_{\tSURE}]$ holds.
\item
    Inequality
    $\E[
    |
    {\hat R_{\tSURE}}
    /
    {\E[\hat R_{\tSURE}]}
    -1
    |^2
    ]
    \le 16/n
    $ holds so that
    the ratio
    $
    \hat R_{\tSURE}
    /
    \E[\hat R_{\tSURE}]$ converges to 1 in $L_2$ and in probability
    as $n,p\to\infty$.
\item
    For any $\alpha,\gamma\in(0,1)$, with probability at least $1-\alpha-\gamma$,
    $$\Big|\SURE - \|\bX\lasso - \bmu\|^2\Big|^2
    \le {\gamma^{-1}\hat R_{\tSURE}\big(1-4(n\alpha)^{-1/2}\big)^{-1}.}
    $$
    \end{enumerate}
    \end{proposition}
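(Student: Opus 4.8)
The plan is to read off parts (i)--(iv) from \Cref{th-consistency} and \Cref{thm:sure-for-sure} after identifying the Lasso gradient, and to prove (v) by a two-sided deviation bound. The single structural input is \Cref{prop-diff-lasso}: under \Cref{assum:bX}, and since $\bv^\top\bep$ has no atoms for Gaussian $\bep$ and any $\bv\neq{\bf 0}$, the estimator $\hbmu=\bX\lasso$ satisfies $\nabla\hbmu=\bP_{\Shat}$ almost everywhere with $\bP_{\Shat}$ the orthogonal projection \eqref{diff-Lasso}. Thus $\hbmu$ is $1$-Lipschitz with symmetric positive semi-definite gradient, $\trace((\nabla\hbmu)^2)=\trace\bP_{\Shat}=\dv\hbmu=|\Shat|$, and $\bI_n-\bP_{\Shat}$ is again an orthogonal projection. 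In particular $\Rhat'_{\tSURE}=\Rhat_{\tSURE}=\hat R_{\tSURE}$ by \eqref{SURE4SURE-prime}, so the comparison $\Rhat_{\tSURE}\le\Rhat'_{\tSURE}$ holds with equality and every hypothesis of \Cref{th-consistency}(i)--(iv) is met (cf.\ also \Cref{remark:1-lipschitz-positive-symmetric-penalized-estimators}).

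For (i), \eqref{var-SURE} gives $\Var(\SURE)\le R_{\tSURE}+\sigma^4 n$, and $R_{\tSURE}=\E[\hat R_{\tSURE}]$ by the unbiasedness in \Cref{thm:sure-for-sure}; the displayed equality is the tautology $\Var(\SURE)=\E[(\SURE-\E[\SURE])^2]$ combined with $\E[\SURE]=\E[\|\bmu-\bX\lasso\|^2]$. For (ii), taking expectations in \eqref{SURE4SURE-prime} at $\hbmu=\bX\lasso$ and using $\Rhat'_{\tSURE}=\hat R_{\tSURE}$ yields $\E[\hat R_{\tSURE}]=2\sigma^2\E[\|\by-\bX\lasso\|^2+\SURE]$, while the lower bound $\E[\hat R_{\tSURE}]\ge\sigma^4 n$ is \Cref{th-consistency}(i), whose integrability hypothesis holds because the diagonal entries of $\bP_{\Shat}$ lie in $[0,1]$ and sum to $|\Shat|\le n$.

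For (iii) and (iv) I would first note that the hypotheses of \Cref{th-consistency}(iv) hold: $\Rhat'_{\tSURE}\ge\Rhat_{\tSURE}$ holds with equality, and $\max_{\|\bu\|=1}|\bu^\top(\bI_n-\nabla\hbmu)\bu|\le 1$ because $\bI_n-\bP_{\Shat}$ is a projection. Then (iii) is \eqref{SURE4SURE-var-bd} together with $\Rhat''_{\tSURE}\le\Rhat'_{\tSURE}=\hat R_{\tSURE}$, giving $\Var(\hat R_{\tSURE})\le 16\sigma^4\E[\Rhat''_{\tSURE}]\le 16\sigma^4\E[\hat R_{\tSURE}]$. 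Claim (iv) is \eqref{consistency}, in which $16\sigma^4/\E[\Rhat'_{\tSURE}]\le 16/n$ follows from the lower bound established in (ii); the $L_2$ and in-probability convergence of $\hat R_{\tSURE}/\E[\hat R_{\tSURE}]$ to $1$ is then immediate as $n,p\to\infty$.

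The only part needing a separate argument is (v), which I would obtain by a union bound. Set $W=\SURE-\|\bX\lasso-\bmu\|^2$, so $\E[W^2]=R_{\tSURE}=\E[\hat R_{\tSURE}]$. Markov's inequality for $W^2\ge 0$ gives $\P\{W^2>\gamma^{-1}\E[\hat R_{\tSURE}]\}\le\gamma$, and Chebyshev's inequality applied to the ratio bounded in (iv), $\E[(\hat R_{\tSURE}/\E[\hat R_{\tSURE}]-1)^2]\le 16/n$, gives with $t=4(n\alpha)^{-1/2}$ (for which $(16/n)/t^2=\alpha$) that $\P\{\hat R_{\tSURE}<(1-4(n\alpha)^{-1/2})\E[\hat R_{\tSURE}]\}\le\alpha$. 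On the intersection of the two complementary events, of probability at least $1-\alpha-\gamma$, one has $W^2\le\gamma^{-1}\E[\hat R_{\tSURE}]\le\gamma^{-1}\hat R_{\tSURE}(1-4(n\alpha)^{-1/2})^{-1}$, which is the claim. The main obstacle here is conceptual rather than computational: everything hinges on \Cref{prop-diff-lasso} forcing $\nabla\hbmu$ to be a projection, so that $\Rhat'_{\tSURE}=\Rhat_{\tSURE}$ and the full strength of \Cref{th-consistency} transfers to the Lasso; once that identification is made, (i)--(iv) are direct specializations and (v) is the routine Markov/Chebyshev combination above.
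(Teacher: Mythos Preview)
Your proposal is correct and follows essentially the same route as the paper: the paper's proof states that (i)--(iv) follow directly from \Cref{th-consistency} (using that $\nabla(\bX\lasso)=\bP_{\Shat}$ is a projection so $\Rhat'_{\tSURE}=\Rhat_{\tSURE}$), and proves (v) by the same Markov/Chebyshev union-bound argument you give, the only cosmetic difference being that the paper reaches the deviation bound on $\hat R_{\tSURE}$ via (iii) and (ii) whereas you invoke (iv) directly, which is equivalent.
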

    
    The proof is given in \Cref{sec:proof-consistency-SURE-lasso}.
    In the above, (i) provides in terms of SURE for SURE an upper bound 
    for the mean squared error of SURE when the prediction risk is viewed as 
    the estimation target of SURE, and (iv) is a consistency result for SURE for SURE
    in the Lasso case. The {non-asymptotic}  
    \Cref{prop:consistency-SURE-for-SURE-Lasso} holds with
    no restriction on {$(n,p)$ and} the tuning parameter $\lambda$. 
    The assumption-free nature of \Cref{prop:consistency-SURE-for-SURE-Lasso}
    is striking:
    SURE for SURE is consistent 
    even if the Lasso itself is not consistent for prediction in the sense that
    $\E[\|\bX\lasso-\bmu\|^2/(n\sigma^2)]$ is bounded away from 0.

    To bound the variance of $\hat R_{\tSURE}$,
    {\Cref{prop:consistency-SURE-for-SURE-Lasso}~(iv)} leverages
    the fact that for $\hbmu(\by)=\bX\lasso$,
    {$\nabla \hbmu(\by)$ is a random orthogonal projection} 
    and $\hat R_{\tSURE} = \hat R_{\tSURE}'$, cf. the discussion following
    \eqref{SURE4SURE-prime}.
    While $\nabla\hbmu(\by)$ for $\hbmu(\by)=\bX\hbbeta(\by)$
    is not a projection for other estimators $\hbbeta$
    such as the Elastic-Net studied in \Cref{subsection:Elastic-Net}, 
    the upper bounds in \Cref{th-consistency} still apply
    to $\hat R_{\tSURE}'$ for any convex regularized least-squares
    as explained in \Cref{remark:1-lipschitz-positive-symmetric-penalized-estimators}. 

    A drawback of the confidence region in {(v)} is the conservative constant {factor $\gamma^{-1}$.} 
    This can be fixed under common regularity assumptions made in sparse
    linear regression as follows with the approach of \Cref{sec:confidence-region-SURE}. 
Let $\{\tau,\gamma,\lam\}$ be as in \Cref{thm:upperbound-expected-sparsity-RE} and define 
$C_{\tau,\gamma} = \max(1,2\tau)(\sqrt{\tau}+1/\sqrt{\tau})^2\{4(1+\gamma)^2+5\}$. 
Under the conditions of \Cref{thm:upperbound-expected-sparsity-RE} including 
\Cref{assum:bX}, \eqref{bound-expceted-sparsity-RE} implies that  
the right-hand side of \eqref{th-2-2-1a} is bounded by $\sigma^4 2 n\eps_n^*$ 
with $\eps_n^* =C_{\tau,\gamma}  (s_0\vee 1)\{\log(p/(s_0\vee 1))\}/
\{n\RE^2(S,2/\gamma)\}$, so that 
\bes
   \P\Big\{\big|\SURE - \|\bmu - \hbmu\|^2\big| \le 1.96 \sigma^2\sqrt{2n}\Big\} \approx 95\% 
\ees
by \Cref{thm:sure-for-sure-tail} when $\eps_n^*=o(1)$, with $v_0^2=\eps_n=\sqrt{\eps_n^*}$, 
and similarly 
\bes
   \P\Big\{\|\bmu - \hbmu\|^2 \le \SURE + 1.645 \sigma^2\sqrt{2n}\Big\} \approx 95\%. 
\ees

\subsubsection{
{Two or more} Lasso estimators}
For the comparison of two Lasso estimators ${\lassoWith{\lam_1}}$ 
and ${\lassoWith{\lam_2}}$ with $\lam_1\neq\lam_2$, 
\bel{SURE-diff-lasso}
\SURE^{\tdiff} 
&=& \big\|\bX{\lassoWith{\lam_1}}-\by\big\|^2 - \big\|\bX{\lassoWith{\lam_2}}-\by\big\|^2 
\cr && {+} 2\sigma^2\big(\big|{\Shat^{(\lam_1)}}\big| - \big|{\Shat^{(\lam_2)}}\big|\big) 
\eel
provides $\E\big[ \SURE^{\tdiff} \big]
= \E\big[\big\|\bX({\lassoWith{\lam_1}}-\bbeta)\big\|^2 
- \big\|\bX({\lassoWith{\lam_2}}-\bbeta)\big\|^2\big]$, 
where $\Shat^{(\lam_j)}=\supp(\lassoWith{\lam_j})$. 
If $\bP_A$ is the projection onto the column space of $\bX_A$,
\bel{SUREforSURE-diff-Lasso}
\qquad&
    	\Rhat_{\tSURE}^{\tdiff} 
        =  4 \sigma^2 \big\|\bX\big({\lassoWith{\lam_1}} - {\lassoWith{\lam_2}}\big)\big\|^2 
        + 4 \sigma^4\trace\big( \big(\bP_{{\Shat^{(\lam_1)}}} - \bP_{{\Shat^{(\lam_2)}}}\big)^2\big)
\eel
provides 
$$\E \Rhat_{\tSURE}^{\tdiff} = R_{\tSURE}^{\tdiff} 
= \E\Big[\Big(\big\|\bX({\lassoWith{\lam_1}}-\bbeta)\big\|^2 
- \big\|\bX({\lassoWith{\lam_2}}-\bbeta)\big\|^2 - \SURE^{\tdiff}\Big)^2\Big].$$ 

{The results of \Cref{sec:SURE-oracle-inequality} are also} directly applicable
    to SURE-tuned Lasso estimators:
    If $\hat\lambda$ is the tuning parameter
    among $\{\lambda_1,...,\lambda_m\}$ with the smallest SURE,
    then for some absolute constant $C>0$
    $$\E\Big[n^{-1/2}\|\bX(\lassoWith{\hat\lambda}-\bbeta)\| - \min_{
        \lambda\in \{\lambda_1,...,\lambda_m\}
    }
    n^{-1/2}\|\bX(\lasso-\bbeta)\|
    \Big]\le C (m/n)^{1/4}
    .
    $$


\subsubsection{Elastic Net}
\label{subsection:Elastic-Net}
Similar computations can be carried out for other estimators such as the Group Lasso
or the Elastic Net. For instance, consider the Elastic Net estimator $\hbbeta_\tEN$
defined as the solution of the optimization problem
\begin{equation}
    \hbbeta_\tEN = \argmax_{\bb\in\R^p}
    \|\bX\bb - \by\|^2/2n + \lambda \|\bb\|_1
    + \gamma \|\bb\|^2/2,
    \label{elastic-net}
\end{equation}
where $\lambda,\gamma > 0$.
Set $\hbmu(\by) = \bX\hbbeta_\tEN$.
Then by similar arguments as in the Lasso case,
the KKT conditions of the optimization problem
\eqref{elastic-net} hold strictly almost everywhere in $\by$.
By differentiating the KKT conditions on a neighbourhood where
the KKT conditions hold strictly (the details are omitted),
the gradient of $\by\to \hbbeta_\tEN$ is given by
\bel{diff-beta-EN}
\nabla \hbbeta_\tEN = \begin{pmatrix} 
\Big(\gamma \bI_{\Shat} + \bX_{\Shat}^\top\bX_{\Shat}\Big)^{-1}\bX_{\Shat}^\top 
\cr {\bf 0}_{\Shat^c \times n} \end{pmatrix}_{p\times n}, 
\eel
and the gradient of $\by\to \bX\hbbeta_\tEN$ is given by
\bel{diff-f-EN}
\nabla(\bX\hbbeta_\tEN)
= \bX_{\Shat}(\gamma \bI_{\Shat} + \bX_{\Shat}^\top\bX_{\Shat})^{-1}\bX_{\Shat}^\top.
\eel
where $\Shat\subset[p]$ is the set of nonzero coefficients of $\hbbeta_\tEN$.
Stein's Unbiased Risk Estimate is given by
\begin{align}
\label{SURE-EN} 
 \SURE =
\|\by - \bX\hbbeta_\tEN\|^2 + 2 \sigma^2 \trace\left[
\bX_{\Shat}(\gamma \bI_{\Shat} + \bX_{\Shat}^\top\bX_{\Shat})^{-1}\bX_{\Shat}^\top
\right] - \sigma^2n,
\end{align}
and SURE for SURE in the Elastic Net case is
\begin{align}
    {\Rhat_\tSURE} = 4\sigma^2\|\bX\hbbeta_\tEN - \by\|^2 + 4\sigma^4\| \bX_{\Shat}(\gamma \bI_{\Shat} + \bX_{\Shat}^\top\bX_{\Shat})^{-1}\bX_{\Shat}^\top\|_F^2
- {2\sigma^4n.}  
\end{align}
By \Cref{thm:sure-for-sure},
this is an unbiased estimate of $\E[(\SURE - \|\bX(\bbeta-\hbbeta_\tEN)\|^2)^2]$. 
SURE for SURE $\Rhat_{\tSURE}^{\tdiff}$ for the difference between two E-nets or between the Lasso and E-net can be derived similarly as in (\ref{SUREforSURE-diff-Lasso}). We omit the details. 

\begin{remark}
    Let $\df=
    \trace[
\bX_{\Shat}(\gamma \bI_{\Shat} + \bX_{\Shat}^\top\bX_{\Shat})^{-1}\bX_{\Shat}^\top
]
    $.
    Since $\df$ is the divergence of the function
    $\bep \to \bX(\hbbeta_{\tEN}-\bbeta)$,
    \Cref{prop:variance-divergence} implies that
    \bes
    \Var[\df]
    &\le&
    \E[ \|
        \bX_{\Shat}(\gamma \bI_{\Shat} + \bX_{\Shat}^\top\bX_{\Shat})^{-1}\bX_{\Shat}^\top
        \|_F^2
    ]
    \cr && {+}
    \E[
    \|
        \bX_{\Shat}(\gamma \bI_{\Shat} + \bX_{\Shat}^\top\bX_{\Shat})^{-1}\bX_{\Shat}^\top
    \bep
    \|^2]/\sigma^2.
    \ees
    If $\bP_{\Shat}$ is the orthogonal projection onto the span of the columns
    of $\bX_{\Shat}$ then the second term satisfies
    $\E[
    \|
        \bX_{\Shat}(\gamma \bI_{\Shat} + \bX_{\Shat}^\top\bX_{\Shat})^{-1}\bX_{\Shat}^\top
    \bep
    \|^2]/\sigma^2
    \le \E[\|\bP_{\Shat} \bep\|^2]/ \sigma^2
    $.
    Since {the right-hand of \eqref{th-model-size-1} is no greater than that of \eqref{th-model-size-2}} 
    for any random $\Shat$ {by the proof of \Cref{thm:variance-size-model-lasso}},
    we obtain
    \bes
    \Var[\df]
    &\le&
    \E[ \|
        \bX_{\Shat}(\gamma \bI_{\Shat} + \bX_{\Shat}^\top\bX_{\Shat})^{-1}\bX_{\Shat}^\top
        \|_F^2
    ]
    \cr && {+}  \E[ 2|\Shat|
    +  4|\Shat|\log(ep \big/ \{1\vee |\Shat|\})]
    \\&\le&
    3 \E[|\Shat|]
    +  4\E[|\Shat|\log(ep \big/ \{1\vee |\Shat|\})].
    \ees
\end{remark}

\subsection{De-biasing nonlinear estimators in {linear regression}}
\label{sec:debiasing}
Consider a linear regression model 
\bel{LM}
\by = \bX\bbeta + \bep
\eel
with an unknown target vector $\bbeta\in \R^p$, a Gaussian noise vector 
$\bep\sim N({\bf 0},\sigma^2\bI_n)$, and a Gaussian design matrix $\bX\in \R^{n\times p}$ with iid 
$N({\bf 0},\bSigma)$ rows. We assume that the covariance matrix $\bSigma$ is known and invertible.

This section explains how to construct an estimate of a linear contrast 
\bel{theta}
\theta  = \big\langle \ba_0, \bbeta \big\rangle 
\eel
from an initial estimator $\hbbeta$.
Here and in the sequel,
$\langle\cdot,\cdot\rangle$ denotes the scalar product in $\R^n$.
Define
\bel{u_0}\quad &
\bu_0 = \bSigma^{-1}\ba_0/\big\langle\ba_0,\bSigma^{-1}\ba_0\big\rangle,
\quad
\bz_0 = \bX\bu_0, 
\quad
\bQ_0 = \bI_{p\times p} - \bu_0\ba_0^\top
\eel
and assume for simplicity that $\ba_0$ is normalized such that 
$$\langle \ba_0, \bSigma^{-1} \ba_0 \rangle = 1.$$
By definition of $\bu_0$, $\bz_0 \sim N({\bf 0},\bI_n)$ and $\bz_0$ is independent of $\bX \bQ_0$.

We assume throughout this section that we are given
an initial estimator $\hbbeta$.
Since $\bX =\bz_0\ba_0^\top + \bX\bQ_0$ and the two random
vectors $\bz_0,\bX\bQ_0$ are independent,
we view $\hbbeta$ as a function with three arguments
$\hbbeta=\hbbeta(\by,\bz_0,\bX\bQ_0)$
and we assume that the partial derivatives 
$(\partial/\partial \by)\hbbeta$ and $(\partial/\partial \bz_0)\hbbeta$
exist almost everywhere.

The estimator $\hbbeta$ provides an initial estimate
of the unknown parameter $\theta$ \eqref{theta}
by the plug-in $\langle \ba_0, \hbbeta\rangle$.
However, this estimator may be biased, and a first attempt
to fix the bias is the following one-step MLE correction
in the direction given by the one dimensional model $\{\hbbeta + t\bu_0, t\in\R\}$,
\begin{equation}
    \langle \ba_0, \hbbeta\rangle + \frac{\langle \bz_0, \by -
    \bX\hbbeta\rangle}{\|\bz_0\|^2}.
    \label{LDPE}
\end{equation}
Variants of the above de-biasing scheme have been considered in \cite{zhang2011statistical, 
ZhangSteph14,BelloniCH14,Buhlmann13,GeerBR14,JavanmardM14a,javanmard2018debiasing}, 
among others. 
We multiply by $\|\bz_0\|^2$ to avoid random denominators;
the random variables $\|\bz_0\|^2$ is chi-square with $n$ degrees
of freedom, equal to $n+O(\sqrt n)$ with overwhelming probability
so that $\|\bz_0\|^2\approx n$ describes the number of observations.

When constructing the estimator \eqref{LDPE}
above by the one-step MLE correction, the statistician hopes
that the quantity
\begin{equation}
    {\|\bz_0\|^2 \langle \ba_0, \hbbeta-\bbeta\rangle + 
    \langle \bz_0, \by - \bX\hbbeta\rangle}
\end{equation}
is asymptotically standard normal; this is the ideal result
to construct confidence intervals for the unknown parameter \eqref{theta}
at the $\sqrt n$-adjusted rate.

By simple algebra we have
\begin{equation}
    \label{de-biased-quantity}
    {\|\bz_0\|^2 \langle \ba_0, \hbbeta - \bbeta\rangle + 
    \langle \bz_0, \by - \bX\hbbeta\rangle}
    = 
    {\bz_0^\top \bep}
    -
    {\bz_0^\top \bX{\bQ_0(\hbbeta-\bbeta)}}
    .
\end{equation}
The random variable $ \bz_0^\top \bep$ in the right-hand side
is mean-zero and $\bz_0^\top \bep/(\sigma/\sqrt n)$ 
is asymptotically standard normal.
It remains to understand 
the bias {term} ${\bz_0^\top \bX{\bQ_0(\hbbeta-\bbeta)}}$.
For the derivation below, we will argue conditionally on $(\bep,\bX\bQ_0)$
and define $f:\R^n\to\R^n$ by
$$f(\bz_0) = \bX{\bQ_0(\hbbeta-\bbeta)}.$$
The quantity $f(\bz_0)$ is still biased
and Stein's formula lets us quantify the remaining bias in \eqref{de-biased-quantity}
exactly as follows
$$
\E\left[{\bz_0^\top \bX{\bQ_0(\hbbeta-\bbeta)}}\Big|\bX\bQ_0, \bep\right]
=
\E\left[{\bz_0^\top f(\bz_0)}\Big|\bX\bQ_0, \bep\right]
=
\E\left[{\dv f(\bz_0)}\Big| \bX\bQ_0, \bep\right]
.
$$
The partial derivatives $(\partial/\partial z_{0i})f_i$ 
where $f_i$ is the $i$-th coordinate of $f$
can be computed by the chain rule
$$
\frac{\partial f_i}{\partial z_{0i}}
= \be_i^\top \bX\bQ_0\Big[
\langle \ba_0,\bbeta\rangle 
\frac{\partial \hbbeta}{\partial y_i}
+
\frac{\partial \hbbeta}{\partial z_{0i} }
\Big]
.
$$
Hence, the divergence of $f$, which quantifies the remaining bias in 
\eqref{de-biased-quantity} is
$\dv f
= \langle \ba_0, \bbeta\rangle \hat{\nu}  + \hat B
$, where
\begin{equation}
\hat{\nu}= \trace \Big[ \bX\bQ_0 \frac{\partial \hbbeta}{\partial \by}
\Big]
,
\quad
\hat B = \trace \Big[ \bX\bQ_0 \frac{\partial \hbbeta}{\partial \bz_0}
\Big]
.
\label{def-hat-df-and-B}
\end{equation}
It will be convenient to write $\dv f$ instead as
\begin{equation}
\dv f = \langle \ba_0, \bbeta-\hbbeta \rangle \hat{\nu} + \hat A
\qquad
\text{ where }
\hat A = \hat B + \langle \ba_0, \hbbeta\rangle \hat{\nu}.
\label{def-hat-A}
\end{equation}
The quantities $\hat{\nu}, \hat A$ and $\hat B$ above can be constructed from
the observed data since they only depend on $\bX,\bQ_0,\by$
and the derivatives of $\hbbeta$.
However, the quantity $\langle \ba_0, \bbeta\rangle$ is unknown;
it is the parameter of interest that we wish to estimate.
This motivates the estimator of $\theta=\langle \ba_0,\bbeta\rangle$ defined by
\begin{equation}
\hat\theta
= \langle \ba_0, \hbbeta\rangle 
+\frac{\bz_0^\top(\by - \bX\hbbeta) + \hat A}{\|\bz_0\|^2-\hat{\nu}}
\label{def-theta}
\end{equation}
with $\hat A$ and $\hat{\nu}$ as in \eqref{def-hat-df-and-B} and \eqref{def-hat-A}.
This estimator $\hat\theta$ is constructed so that the random variable
\bel{pf-th-debias}
 {(\|\bz_0\|^2 - \hat{\nu})(\hat\theta - \theta)}
- {\bz_0^\top \bep}
 &=& {\hat A + \hat{\nu}\langle \ba_0,\bbeta-\hbbeta\rangle - \bz_0^\top f(\bz_0)}
\cr &=& {\dv f(\bz_0) - \bz_0^\top f(\bz_0)}
\eel
is exactly mean-zero by the first-order Stein's formula \eqref{stein-first-order-formula}.
Furthermore, the variance of this random variable can be expressed exactly
in terms of the derivatives of $f$ thanks to the Second Order Stein formula \eqref{eq1}.
Similarly, the above equality can be rewritten as
\bel{mean-zero-with-bep-outside}
(\|\bz_0\|^2 - \hat{\nu})(\hat\theta - \theta)
= 
\dv f(\bz_0) - \bz_0^\top (f(\bz_0) - \bep),
\eel
which is equal to $\dv g(\bz_0) - \bz_0^\top g(\bz_0)$
for $g(\bx) = f(\bx)-\bep$ since $f$ and $g$ have the same divergence.
Hence the random variable \eqref{mean-zero-with-bep-outside} is exactly mean zero
by the first-order Stein's formula, and the Second Order Stein formula \eqref{eq1}
provides an exact identity for its variance.
We gather the above derivation in the following theorem.

\begin{theorem}
    \label{thm:debiasing-general}
    Let $\hbbeta$ be an estimator such that, if we write it as a function
    $\hbbeta(\by,\bz_0,\bX\bQ_0)$, all partial derivatives of the function 
    $\bX\bQ_0\hbbeta$ with respect to $\by$ and $\bz_0$
    exist and are in $L_2$.
    Define the estimator $\hat\theta$ of $\theta=\langle \ba_0,\bbeta\rangle$
    by \eqref{def-theta}, with $\hat{\nu}$ and $\hat A$ as in
    \eqref{def-hat-df-and-B} and \eqref{def-hat-A}.
    Then the random variable
    \bel{adjusted-random-variable} 
        \frac{(\|\bz_0\|^2 - \hat{\nu})(\hat\theta - \theta)}{\sigma\sqrt n}
        - \frac{\bz_0^\top \bep}{\sigma\sqrt n}
    \eel
    is exactly mean-zero and its variance is exactly equal to
    \bel{asymptotic-variance-negligible} 
        \frac{1}{n\sigma^2}
        \left(
            \E\left[\|\bX{\bQ_0(\hbbeta-\bbeta)}\|^2\right] 
            +
            \E\left[\trace((\nabla f(\bz_0))^2)\right]
        \right)
     \eel
    where $f(\bz_0) = \bX{\bQ_0(\hbbeta-\bbeta)}$.
    Furthermore, 
    the random variable
    $(\|\bz_0\|^2 - \hat{\nu})(\hat\theta - \theta)$
    is also mean-zero with variance equal to
    \bel{asymptotic-variance-including-bep} 
            \E\left[\|\bX\hbbeta - \by- \bz_0\ba_0^\top(\hbbeta-\bbeta)\|^2 
            +
            \trace((\nabla f(\bz_0))^2)\right]
            .
     \eel
\end{theorem}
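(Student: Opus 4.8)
The plan is to reduce both claims to the algebraic identities \eqref{pf-th-debias} and \eqref{mean-zero-with-bep-outside} already assembled before the statement, and then to apply Stein's formula \eqref{stein-first-order-formula} and the Second Order Stein formula \eqref{eq1} \emph{conditionally} on the pair $(\bep,\bX\bQ_0)$. The structural fact that makes this work is that $\bz_0=\bX\bu_0\sim N({\bf 0},\bI_n)$ is independent of $(\bep,\bX\bQ_0)$: conditionally on $(\bep,\bX\bQ_0)$ the vector $\bz_0$ is still standard normal, and since $\by=\theta\bz_0+\bX\bQ_0\bbeta+\bep$ is an affine function of $\bz_0$ (because $\bX=\bz_0\ba_0^\top+\bX\bQ_0$), the map $f(\cdot)=\bX\bQ_0(\hbbeta(\cdot)-\bbeta)$ is a bona fide function of $\bz_0$ alone for each fixed conditioning value. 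First I would verify that the integrability hypothesis transfers to the conditional problem: the partial derivatives of $\bX\bQ_0\hbbeta$ with respect to $\by$ and $\bz_0$ are in $L_2$ under the joint law, and they coincide with those of $f$ since $f$ differs from $\bX\bQ_0\hbbeta$ by a term constant in $\bz_0$; Fubini's theorem then guarantees that for almost every value of $(\bep,\bX\bQ_0)$ the function $\bz_0\mapsto f(\bz_0)$ has finite conditional Sobolev norm, i.e. lies in $W^{1,2}(\gamma_n)$, so \Cref{thm:second-order-stein}(iii) applies conditionally (and $L_2$ integrability gives $L_1$, so the first-order formula applies too).

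For the statement concerning \eqref{adjusted-random-variable}, the starting point is \eqref{pf-th-debias}, which gives $(\|\bz_0\|^2-\hat\nu)(\hat\theta-\theta)-\bz_0^\top\bep=\dv f(\bz_0)-\bz_0^\top f(\bz_0)$. Taking the conditional expectation given $(\bep,\bX\bQ_0)$ and invoking \eqref{stein-first-order-formula} shows the right-hand side has conditional mean zero, and the tower property yields the unconditional mean-zero claim (dividing by $\sigma\sqrt n$ is immaterial). Because the mean is zero, the variance equals the second moment, and applying \Cref{thm:second-order-stein} conditionally gives $\E[(\bz_0^\top f-\dv f)^2\mid\bep,\bX\bQ_0]=\E[\|f\|^2+\trace((\nabla f)^2)\mid\bep,\bX\bQ_0]$. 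Taking total expectation, substituting $\|f(\bz_0)\|^2=\|\bX\bQ_0(\hbbeta-\bbeta)\|^2$, and dividing by $\sigma^2 n$ reproduces \eqref{asymptotic-variance-negligible}.

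For \eqref{asymptotic-variance-including-bep} I would instead start from \eqref{mean-zero-with-bep-outside} and set $g(\bx)=f(\bx)-\bep$. Since $\bep$ is held fixed in the conditional problem, $g$ differs from $f$ by an additive constant in $\bz_0$, whence $\nabla g=\nabla f$ and $\dv g=\dv f$; thus $(\|\bz_0\|^2-\hat\nu)(\hat\theta-\theta)=\dv g(\bz_0)-\bz_0^\top g(\bz_0)$ is again conditionally mean-zero by \eqref{stein-first-order-formula}, and \Cref{thm:second-order-stein} gives its variance as $\E[\|g\|^2+\trace((\nabla g)^2)]=\E[\|g\|^2+\trace((\nabla f)^2)]$. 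The final step is purely algebraic: using $\bX=\bz_0\ba_0^\top+\bX\bQ_0$ one checks $\bX\hbbeta-\by-\bz_0\ba_0^\top(\hbbeta-\bbeta)=\bX\bQ_0(\hbbeta-\bbeta)-\bep=g(\bz_0)$, so that $\|g(\bz_0)\|^2=\|\bX\hbbeta-\by-\bz_0\ba_0^\top(\hbbeta-\bbeta)\|^2$, which is exactly \eqref{asymptotic-variance-including-bep}.

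I expect the main obstacle to be not the computation but the bookkeeping around differentiation in $\bz_0$: perturbing $\bz_0$ moves $\hbbeta$ both through its explicit $\bz_0$-argument and through $\by=\theta\bz_0+(\text{terms constant in }\bz_0)$, since $\bX=\bz_0\ba_0^\top+\bX\bQ_0$. This is precisely why the divergence splits as $\dv f=\langle\ba_0,\bbeta\rangle\hat\nu+\hat B$ in \eqref{def-hat-df-and-B}--\eqref{def-hat-A}, and I would make sure the chain-rule contribution through $\by$ is fully accounted for before invoking the Stein formulae. The only other delicate point is the measure-theoretic legitimacy of the conditional application of \Cref{thm:second-order-stein}, which is handled by the Fubini argument recorded in the first paragraph.
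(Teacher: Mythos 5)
Your proposal is correct and follows essentially the same route as the paper, which derives the theorem as a direct consequence of the identities \eqref{pf-th-debias} and \eqref{mean-zero-with-bep-outside} together with the first and second order Stein formulae applied conditionally on $(\bep,\bX\bQ_0)$. Your added care about the Fubini/conditional-Sobolev bookkeeping and the explicit check that $\bX\hbbeta-\by-\bz_0\ba_0^\top(\hbbeta-\bbeta)=f(\bz_0)-\bep$ only makes explicit steps the paper leaves implicit.
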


\Cref{thm:debiasing-general} is a direct consequence of 
the Second Order Stein formula \eqref{eq1} 
and the analysis summarized in (\ref{pf-th-debias}).

The random variable 
{$\bz_0^\top \bep/(\sigma\sqrt n)$} is asymptotically standard normal. 
The first claim
of the above result implies that
$({(\|\bz_0\|^2 - \hat{\nu})(\hat\theta - \theta)})/{\sigma\sqrt n}$ is also asymptotically
normal if \eqref{asymptotic-variance-negligible} converges to 0 as $n\to+\infty$. This provides a general strategy
to derive asymptotic normality results;
however the calculation of the gradients
of $\hbbeta$ and $f$
has to be carried out on case-by-case basis which is outside of the scope of
the present paper.



The above construction provides a general scheme to de-bias
an initial estimator $\hbbeta$ for the estimation of a linear
contrast $\theta=\langle \ba_0,\bbeta\rangle$
when the covariance matrix $\bSigma$ is known 
    because $(\|\bz_0\|^2-\hat\nu)(\hat\theta-\theta)$ is mean-zero and its variance
    is exactly given by \eqref{asymptotic-variance-including-bep}. Notably,
    both $(\|\bz_0\|^2-\hat\nu)(\hat\theta-\theta)$ and the quantity inside
    the expectation in \eqref{asymptotic-variance-including-bep} only depends on the 
    unknown parameter of interest $\theta=\ba_0^\top\bbeta$
    and known observable quantities
    $\ba_0^\top\hbbeta$, $\hat A$,
    $\hat \nu$, $\hbbeta$ and its derivatives. One may consider the z-score
    $(\|\bz_0\|^2-\hat\nu)(\hat\theta-\theta)/V^*(\theta)^{1/2}$ where $V^*(\theta)$ is
    the quantity inside the expectation in \eqref{asymptotic-variance-including-bep}.
    First and Second Order Stein formulae provide this z-score as the starting point
    to de-bias the general estimator $\hbbeta$ and normalize its variance,
    as soon as the partial derivatives of $\hbbeta$ exist.
    Studying the asymptotic distribution of this z-score requires tools beyond
    the scope of the present work and will be the subject of a forthcoming paper.

A notable feature of the above result is the random variable $\hat \nu$
whose role is to adjust multiplicatively the random variable $(\hat\theta-\theta)$
so that $(\|\bz_0\| - \hat \nu)(\hat\theta - \theta)$ is exactly mean-zero.
This adjustment accounts for the degrees-of-freedom of the initial
estimator $\hbbeta$.
We refer to our concurrent paper \cite{bellec_zhang_future_lasso-dof-adjustment}
for theory of degrees-of-freedom adjustment in semi-parametric inference about
a preconceived one-dimensional parameter $\theta=\langle \ba_0, \bbeta\rangle$.

\subsection{{Monte Carlo approximation of divergence} 
}
\label{sec:MonteCarlo}
The Second Order Stein formula and the techniques 
presented in this paper also suggest a Monte Carlo
method to approximate the divergence in the general case. 

Suppose we are interested in the approximation
of $\dv f(\by)$ at the currently observed vector $\by$. 
Assume that the function $f(\cdot)$ is 1-Lipschitz 
and its value can be quickly computed for small perturbations of $\by$,
say, $f(\by + a \bz)$ for small $a\bz$. 
{For example, when} $f(\by) = \bX\hbbeta$ in the linear model with a convex regularized least-squares 
estimator $\hbbeta$, the 1-Lipschitz condition holds automatically \cite{bellec2016bounds} 
{as discussed in \Cref{remark:1-lipschitz-positive-symmetric-penalized-estimators},} 
and if $\hbbeta(\by)$ has already been computed by an iterative algorithm, 
the computation of $\hbbeta(\by+a\bz)$ would typically be fast as 
one can use $\hbbeta(\by)$ as a starting point (``warm start'') to compute $\hbbeta(\by+a\bz)$. 
Next, with the 1-Lipschitz function
$h(\bz) = a^{-1}(f(\by+a\bz) - f(\by))$
and $\bz\sim N({\bf 0},\bI_n)$ independent of $\by$,
if $\E_{\bz}$ denotes the expectation with respect to $\bz$
conditionally on $\by$,
we have by the Gaussian Poincar\'e inequality that
$$
\E_{\bz}
[(\bz^\top h(\bz)- D_0 )^2]
\le
\E_{\bz}[
\|h(\bz) + \nabla h(\bz) \bz\|^2
]
\le
4n
$$
with $D_0= \E_{\bz} \dv h(\bz) = \int_{\R^n} (2\pi)^{-n/2}e^{-\|\bx\|^2/2} \dv f(\by + a\bx) d\bx$.
Hence
If we compute {$f(\by+a\bz_j)$} at $m$ independent Gaussian perturbations
$\bz_1,...,\bz_m\sim N(0,\bI_n)$, inequality 
$$\E\Big[\Big(\frac 1 m \sum_{j=1}^m \bz_j^\top h(\bz_j) - D_0\Big)^2{\Big|\by}\Big]\le \frac{4n}{m}$$
holds.
Here the function $\by\to \dv(\by)$
is locally integrable and almost surely
bounded by $n$ thanks to the Lipschitzness of $f$.
For almost every $\by$,
$\by$ is a Lebesgues point of 
$\dv f$
so that $D_0 \to \dv (\by)$
as $a\to 0$ by the
Lebesgues differentiation theorem.
Hence $D_0\approx \dv f(\by)$
for small enough $a>0$
and $\frac 1 m \sum_{j=1}^m \bz_j^\top h(\bz_j)$ provides a useful
approximation of the divergence
thanks to {the above conditional variance bound.} 
For large $m$, more precise results can be obtained by the central limit theorem.
Note that by the Second Order Stein formula, $\frac 1 m \sum_{j=1}^m \bz_j^\top h(\bz_j)$
is also close to the empirical average $\bar D^{(m)}=
\frac 1 m \sum_{j=1}^m \dv f(\by+a\bz_j)$
thanks to
$$\E\Big[\Big(\frac 1 m \sum_{j=1}^m \bz_j^\top h(\bz_j) - \bar D^{(m)}\Big)^2\Big]
= \frac 1 m \E\left[\|h(\bz_1)\|^2 + \trace\{[\nabla h(\bz_1)]^2\}\right]
\le \frac{2n}{m}
.$$

We apply this probabilistic procedure to the Elastic-Net
and Singular Value Thresholding (SVT) \cite{candes2013unbiased}
for which explicit formulae for $\hat{\text{df}}=\trace[\nabla f(\by) ]$
are available. 

{Indeed,} 
\cite[Equation (1.8)]{candes2013unbiased}
provides an explicit formula for the degrees-of-freedom
of SVT with tuning parameter $\lambda$:
If $\hat\bB$ soft-thresholds the singular vlaues of an observed
matrix $\bY\in\R^{m\times n}$ with tuning parameter $\lambda$,
the divergence of $\hat\bB$ with respect to $\bY$ is given by
$$
\df
=\sum_{i=1}^{q\wedge n}
\left\{
I_{\{\sigma_i>\lambda\}} + |q-n|(1-\lambda/\sigma_i)_+
\right\}
+ 2 \sum_{i=1}^{q\wedge n}
\sum_{j=1,j\ne i}^{q\wedge n}
\frac{\sigma_i(\sigma_i - \lambda)_+}{\sigma_i^2 - \sigma_j^2}
$$
where $\sigma_1,...,\sigma_{n\wedge q}$ are the singular values of $\bY$.
We then compare on simulated data this exact formula to the
above random approximation scheme.
For $n=100, q=101$, $\lambda=10.0$, with $\bY$ being the sum of standard
normal noise plus a ground-truth rank-10 matrix,
we apply the above algorithm with $m$ perturbations
$(\bY+a\bZ_j)_{j=1,...,m}$ for various values of $m$  and compute
$\df_{approx} = \frac 1 m \sum_{j=1}^m \trace\{\bZ_j^\top h(\bZ_j) \}$
where
$h(\bZ_j) = a^{-1}(\hat \bB(\bY+a\bZ_j) - \hat\bB(\bY))$
with $a=0.0001$
as explained above. The results are in \Cref{table:SVT}.  

In the case of the Elastic-Net with $\ell_1$ parameter $\lambda>0$
and $\ell_2$ parameter $\gamma>0$, we draw a similar experiment
with the exact formula for degrees-of-freedom being given
by $\df = \trace[ \bX_{\Shat}( \bX_\Shat^\top \bX_\Shat + n \gamma)^{-1}\bX_{\Shat}^\top]$ 
{as in \Cref{subsection:Elastic-Net}.}  
With
$n = 500$, $p = 400$, and again $a = 0.001$,
$\bX$ having independent symmetric $\pm1$, $\lambda = 0.8\sqrt{4\log(p)/n}$,
$\gamma=0.2\sqrt{4\log(p)/n}$,
we obtain
the standard errors and boxplots in \Cref{table:enet}.

\begin{sidewaysfigure}[ht]
\centering
\begin{tabular}{lrrrrrrrrrrr}
\toprule
{$m$} &  10 &  25 &  50 &  75 &  100 &  125 &  150 &  175 &  200 &  225 &  df\_exact \\
\midrule
mean &        2600.98 &        2600.17 &        2598.41 &        2597.98 &        2597.39 &        2597.74 &        2597.91 &        2597.78 &        2597.90 &        2597.73 &   2598.75 \\
std  &          22.06 &          11.96 &           9.52 &           8.21 &           6.91 &           5.98 &           5.41 &           4.72 &           5.02 &           5.56 &      NA \\
\bottomrule
\end{tabular}
\includegraphics[width=6in]{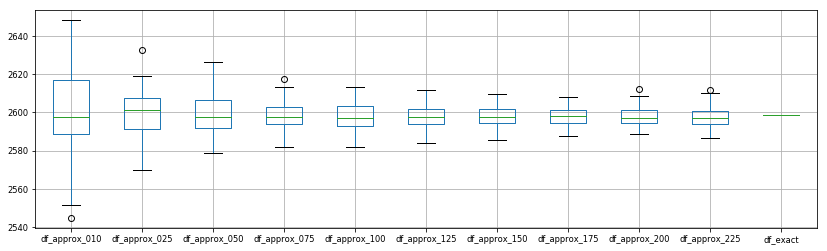}
\caption{Approximate $\df$ for SVT, computed over 50 realisations
of $(\bZ_1,...,\bZ_m)$ for various values of $m$.
The rightmost column is the excact formula from \cite{candes2013unbiased}.
The value of $\bY$ is the same over all 50 realisations.
\label{table:SVT}
}
\label{fig:SVT}
\end{sidewaysfigure}

\begin{sidewaysfigure}[ht]
\centering
\label{fig:enet}
\begin{tabular}{lrrrrrrrrrrrr}
\toprule
$m$ &  10 &  25 &  50 &  75 &  100 &  125 &  150 &  175 &  200 &  225 &  250 &  df\_exact \\
\midrule
mean &          52.16 &          52.06 &          52.23 &          52.39 &          52.34 &          52.33 &          52.40 &          52.35 &          52.41 &          52.40 &          52.43 &     52.45 \\
std  &           2.91 &           1.77 &           1.36 &           1.18 &           1.02 &           0.88 &           0.89 &           0.88 &           0.77 &           0.72 &           0.68 &      NA \\
\bottomrule
\end{tabular}
\includegraphics[width=6in]{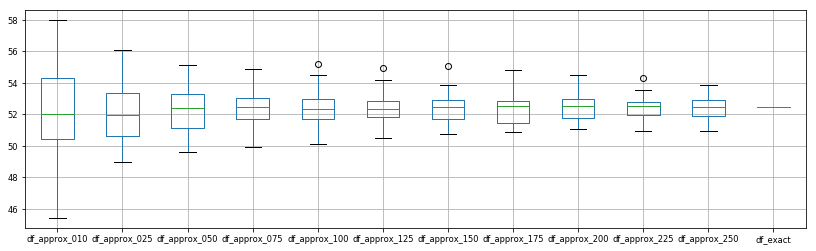}
\caption{Approximate $\dv (\bX\hbbeta)$ for the Elastic-net, computed over 50 realisations
of $(\bz_1,...,\bz_m)$ for various values of $m$
The rightmost column is the excact formula from \cite{candes2013unbiased}.
Corresponding boxplots are visible in \Cref{fig:SVT}.
The value of $(\bX,\by)$ is the same over all 50 realisations.
\label{table:enet}
}
\end{sidewaysfigure}

The experiments show that the above approximation scheme provides 
good approximations
in these special cases where exact formula are available.
Hence it could also be useful for estimators where no exact
formula is available for the divergence.

\appendix
\section{Non-smooth functions}
\label{appendix:proof(ii)-(iii)}

\begin{proof}[Proof of \Cref{thm:second-order-stein} (ii) for Lipschitz functions]
    If $f$ is Lipschitz, then each component $f_i$ of $f$ is also Lipschitz.
        Hence $f_i$ belongs to the space $W^{1,2}(\gamma_n)$ defined above
        \eqref{norm-sobolev} and the weak gradient of $f_i$ is equal almost
        everywhere to its gradient in the sense of Frechet differentiability
        (cf., e.g.  \cite[Theorem 4-6  pp 279-281]{evans1997partial}).
        Thus (ii) is a consequence of (iii).
\end{proof}

\begin{proof}[Proof of \Cref{thm:second-order-stein} (iii) for $f_i\in W^{1,2}(\gamma_n)$] 
    Since $W^{1,2}(\gamma_n)$ is the completion
        with respect to the norm \eqref{norm-sobolev}
        of the space $C^\infty_0(\R^n)$
    of smooth functions
    with compact support,
    for each coordinate $i=1,...,n$ there exists a sequence $(g_{i,q})_{q\ge 1}$
    of $C^\infty_0(\R^n)$ functions with
    $\max_{i=1,...,n}\E[(f_i-g_{i,q})^2 + \|\nabla f_i - \nabla g_{i,q}\|^2]\to 0$
    as $q\to+\infty$.
    Define $g_q:\R^n\to\R^n$ as the function with components $g_{1,q},...,g_{n,q}$.
    By considering a subsequence, we may assume that for all $q\ge 1$,
\bes
\E\Big[ \| g_q(\bz) - f(\bz) \|^2 \Big] + \E\Big[ \| \nabla g_q(\bz) - \nabla f(\bz)\|_F^2 \Big]\le 2^{-q-2}
\ees 
which implies that $g_q\to f$ and $\nabla g_q\to \nabla f$ pointwise
almost surely by the Borel-Cantelli lemma.
Let $X_q = \bz^\top g_q(\bz) - \dv g_q(\bz)$ and $X  = \bz^\top f(\bz) - \dv f(\bz)$. 
Then $X_q\to X$ almost surely. The triangle inequality and \Cref{thm:second-order-stein} (i) applied to $g_k - g_{k+1}$ yields 
\bes
&& \big\{\E\big[\big(X_q-X\big)^2\big]\big\}^{1/2}
\cr &\le& \sum_{k=q}^\infty \big\{\E\big[\big(X_k-X_{k+1}\big)^2\big]\big\}^{1/2}
\cr &\le& \sum_{k=q}^\infty \big\{\E\big[ \| g_k(\bz) - g_{k+1}(\bz) \|^2 \big] + 
\E\big[ \| \nabla g_k(\bz) - \nabla g_{k+1}(\bz)\|_F^2 \big]\big\}^{1/2}
\cr &\le& \sum_{k=q}^\infty 2^{-k/2} \to 0. 
\ees
Hence, with another application of \Cref{thm:second-order-stein}(i), 
\bes
&& \E\Big[\Big(\bz^\top f(\bz) - \dv f(\bz)\Big)^2\Big] 
\cr &=& \lim_{q\to\infty} \E\Big[\Big(\bz^\top g_q(\bz) - \dv g_q(\bz)\Big)^2\Big]
\cr &=& \lim_{q\to\infty} \E\Big[\|g_q(\bz)\|_2^2 + \trace\big((\nabla g_q)^2(\bz)\big)\Big]
\cr &=& \E\Big[\|f(\bz)\|_2^2 + \trace\big((\nabla f)^2(\bz)\big)\Big].
\ees
For the first and last equality, we use the fact that
if two sequences $(Z_q)_{q\ge 1}$ and $(Y_q)_{q\ge1}$ and two random variables
$Y_\infty,Z_\infty$
are such that $\E[(Y_q-Y_\infty)^2]\to 0$ and $\E[(Z_q-Z_\infty)^2] \to 0$ as $q\to+\infty$
then $\E[Y_q^2]\to \E[Y_\infty^2]$ and $\E[Y_q Z_q] \to \E[ Y_\infty Z_\infty]$.
This completes the proof. 
\end{proof}

\section{Proof of consistency} 
\label{sec:appendix-proof-risk-of-sqrt-SURE}
\begin{proof}[Proof of \Cref{th-consistency}] 
        (i) Since $\E[\SURE]=\E[\|\hbmu-\bmu\|^2]$,
    \bes
    \E\big[\Rhat'_{\tSURE}\big] 
    = \E\big[2\sigma^2\|\by - \hbmu\|^2 + 2\sigma^2\|\hbmu-\bmu\|^2\big]
    \ge \sigma^2\E\big[\|\bep\|^2\big] = \sigma^4n.
    \ees
    
    (ii) As $((\SURE)_+^{1/2} - \|\hbmu - \bmu\|)^2\le |\SURE - \|\hbmu - \bmu\|^2|$, by
    the triangle inequality
    \bes
        && \E\Big[\Big({(\SURE)_+}^{1/2} - \E[ \|\hbmu - \bmu\|^2]^{1/2}\Big)^4\Big]^{1/4}
        \cr && \le
        R_{\tSURE}^{1/4}
        + \E\Big[\Big(\|\hbmu - \bmu\| - \E[\|\hbmu - \bmu\|^2]^{1/2}\Big)^4\Big]^{1/4}.
    \ees
    If $\by\to\hbmu$ is a 1-Lipschitz function then
    $\bep\to \|\hbmu - \bmu\|$ is also 1-Lipschitz and
    the second term above is bounded from above as follows:
    \bes
      &&\E[(\|\hbmu - \bmu\| - \E[\|\hbmu - \bmu\|^2]^{1/2})^4]^{1/4}
      \\&\le&
        \E[(\|\hbmu - \bmu\| - \E \|\hbmu - \bmu\|)^4]^{1/4}
        +
        \E[(\E\|\hbmu - \bmu\| - \E[\|\hbmu - \bmu\|^2]^{1/2})^4]^{1/4}
      \\&\le&  (2+1)\sigma = 3\sigma
    \ees
    by {$\int_0^\infty \P(|(\|\hbmu - \bmu\| - \E \|\hbmu - \bmu\| |> \sigma x) {4x^3} dx \le \int_0^\infty 
    {8} e^{-x^2/2} {x^3} dx = 16$} 
    for the first term,
    and by
    $0\le\E[\|\hbmu - \bmu\|^2] - (\E\|\hbmu - \bmu\|)^2 \le \sigma^2$
    thanks to the Gaussian Poincar\'e inequality for the second term. 
    {This yields \eqref{QUARTIC-RISK-SQRT-SURE}. 

    (iii) Let $\bh = \hbmu-\bmu$ and $\df = \dv \hbmu$.  
    By \eqref {SURE-sigma}, $\SURE$ is a variable of form \eqref{eq:general-random-variable} with 
    $f(\bep) = \bep -2\bh$ and $g(\bep) = - \|\bh\|^2$. 
    As $\nabla f(\bep) = \bI_n - 2\nabla \hbmu$ and 
    $\nabla g(\bep) = -2(\nabla\hbmu)\bh$, \eqref{variance-ineq-general} implies 
    \bes
    && \Var(\SURE) 
    \cr &\le &\E\Big[\sigma^2\|\bep - 2\bh + 2(\nabla\hbmu)\bh\|^2
    +\sigma^4\trace\big(\big(\bI_n - 2\nabla \hbmu\big)^2\big)\Big] 
    \cr &=&
    R_{\tSURE}+
    4\sigma^2 \E[(\bep-2\bh)^\top(\nabla \hbmu)\bh + \|(\nabla \hbmu)\bh\|^2]
   \cr & = &R_{\tSURE} + \sigma^2\E\Big[\bep^\top(\nabla\hbmu)\bep
    - (\bep - 2\bh)^\top(\nabla\hbmu)(\bep-2\bh)
    +
    4 \bh^\top\{(\nabla \hbmu)^2-\nabla \hbmu\}\bh
\Big]
   \cr & \le & R_{\tSURE} + \sigma^4n 
    \ees 
    where the first equality follows from \eqref{SURE-for-SURE-calculation},
    the second is simple algebra, and the last inequality follows from 
    ${\bf 0}_{n\times n}\preceq(\nabla \hbmu)^2\preceq \nabla \hbmu\preceq \bI_n$.
    
    (iv) Similar to (iii), $\Rhat'_{\tSURE}/(4\sigma^2) = \|\by - \hbmu\|^2 + \sigma^2 \df  - \sigma^2 n/2$ 
    is a variable of form \eqref{eq:general-random-variable} with 
    $f(\bep) = \bep - \bh$ and $g(\bep) = \bep^\top\bh - \|\bh\|^2-\sigma^2 n/2$. 
    Thus, \eqref{variance-ineq-general} implies 
    \bes
    && \Var(\Rhat'_{\tSURE}/(4\sigma^2)) 
   \cr &\le &\E\Big[\sigma^2\|(\nabla\hbmu - \bI_n)(2\bh-\bep)\|^2
    +\sigma^4\trace\big(\big(\bI_n - \nabla \hbmu\big)^2\big)\Big] 
    \cr &\le &\E\Big[\sigma^2\|2\bh-\bep\|^2
    +\sigma^4\trace\big(\big(\bI_n - \nabla \hbmu\big)^2\big)\Big] 
    \cr & = &\sigma^2\E\Big[4\|\bep-\bh\|^2 + 2\sigma^2(\df - n)
    +\sigma^2\trace\big(\big(\nabla \hbmu\big)^2\big)\Big] 
    \cr & = &(3/4)R'_{\tSURE} + R_{\tSURE}/4 - \sigma^4\E\big[\df\big]
    \cr &=& \E\big[\Rhat''_{\tSURE}\big]
    \ees 
    with the $\Rhat''_{\tSURE}$ in \eqref{SURE4SURE-var-bd}.   
As $\Rhat_{\tSURE} \le \Rhat'_{\tSURE}$, 
$\Var(\Rhat'_{\tSURE}/(4\sigma^2)) \le \E[\Rhat'_{\tSURE}]$.}   
\end{proof}

\begin{proof}[Proof of \Cref{remark:1-lipschitz-positive-symmetric-penalized-estimators}]
    The claim that $\hbmu=\bX\hbbeta(\by)$ is 1-Lipschitz
    is proved in \cite[Proposition 3]{bellec2016bounds}. 
        The symmetry and positivity of
        $\nabla \hbmu$ is proved in \cite[Proposition J.1]{bellec2019second_order_poincare}
        with the argument outlined here:
        $\hbmu(\by)\in\partial u(\by)$ where the function
        $u(\by) = \|\by\|^2/2 - \|\bX\hbbeta(\by) - \by\|^2/2-g(\hbbeta(\by))$
        is convex,
        Alexandrov's theorem on the almost sure second order differentiability
        of convex functions given for instance in 
        \cite[Theorem D.2.1]{niculescu2006convex}
        grants that the Hessian of $u$
        is almost surely symmetric positive semi-definite,
        and the Hessian of $u$ equals $\nabla \hbmu(\by)$ almost surely. 
\end{proof}

\section{Proofs: Confidence regions with SURE}
\label{proof:confidence-regions-SURE}

\begin{proof}[Proof of \Cref{thm:sure-for-sure-tail}
    and \Cref{corollary:exact-quantile-confidence-region}
    ]
    Assume $\sigma=1$ without loss of generality. 
As $f(\by) = \hbmu-\by = (\hbmu-\bmu)-\bep$, 
\bes
\SURE - \|\bmu - \hbmu\|^2
&=& \|\hbmu - \by\|^2 - \|\bmu - \hbmu\|^2 + 2\dv (\hbmu-\by) + n 
\cr &=& \|\bep\|^2 - 2\bep^\top(\hbmu-\bmu)+ 2\dv (\hbmu-\bmu) - n, 
\ees
so that \eqref{th-2-2-1a} is a direct consequence of 
\Cref{thm:second-order-stein}. By Markov' inequality, 
\bes
&& \P\Big\{ |\bep^\top(\hbmu-\bmu) - \dv (\hbmu-\bmu)| \ge v_0\sqrt{n/2}\Big\}
\cr &{\le}&  
\E \big[\|\hbmu -\bmu \|^2 + \trace((\nabla\hbmu(\by))^2)]\big/(v_0^2n/2) \le \eps_n. 
\ees
The conclusion follows from the definition of $v_\alpha$ and the union bound. 

    \Cref{corollary:exact-quantile-confidence-region}
    follows from \Cref{thm:sure-for-sure-tail} with $v_0^2=2\gamma_n^{1/2}$,
    $\eps_n = \gamma_n^{1/2}$ and the continuous mapping theorem.
\end{proof}
}

\begin{proof}[Proof of \Cref{thm:data-driven-gamma_n}]
    {Assume $\sigma=1$ without loss of generality. 
    Set $\df=\trace[\nabla \hbmu]$, $\bh = \hbmu-\bmu$ and 
    $W = \SURE-\E\big[\|\hbmu-\bmu\|^2\big]-\|\bep\|^2+{n}$. 
    As $W$ is of the form \eqref{eq:general-random-variable} with 
    $f(\bep) = -2\bh$ and $g(\bep) =\E[\|\bh\|^2]  - \|\bh\|^2$, 
    \bes
    \Var(W) 
    &\le &\E\big[\| - 2\bh + (\nabla\hbmu)(2\bh)\|^2
    +\trace\big(\big(2\nabla \hbmu\big)^2\big)\big] 
    \cr & \le & \E\big[4\|\bh\|^2+ 4\trace\big(\big(\nabla \hbmu\big)^2\big)\big] 
    \cr & \le & 4\,\E\big[\SURE+ \df\big]. 
    \ees 
    by \eqref{variance-ineq-general}. This gives \eqref{th6-0}. 
    Let $X_n=|W|/\E[W^2]^{1/2}$. We have 
    \bel{bound-W-X_n}
        |W| \le 2X_n\big(\E[{\SURE}+\df]\big)^{1/2}.
    \eel
    As $\SURE+\df$ is of form \eqref{eq:general-random-variable} with 
    $f(\bep) = \bep -3\bh$ and $g(\bep) = - \bep^\top\bh- \|\bh\|^2$, 
    \eqref{variance-ineq-general} yields
    \bes
    \Var(\SURE+\df) 
     &\le &\E\Big[\|\bep - 2\bh + (\nabla\hbmu)(2\bh+\bep)\|^2
    +\trace\big(\big(\bI_n - 3\nabla \hbmu\big)^2\big)\Big].
    \ees
    Using $\trace[(\nabla \hbmu)^2]\le \df$, the second term
        is bounded by $n-6\df + 9\df = n +3\df$.
    Since $(2\bh+\bep)(\nabla \hbmu)^2(2\bh+\bep)
    \le (2\bh+\bep)(\nabla \hbmu)(2\bh+\bep)$, expanding the square yields 
    \bes
       &&\Var(\SURE+\df) 
        \cr &\le &\E\big[\|\bep-2\bh\|^2 + (2\bh+\bep)(\nabla\hbmu)(2\bh+\bep) 
    + 2(\bep - 2\bh)^\top(\nabla\hbmu)(2\bh+\bep)\big]
    \cr && +\E\big[n+3\df\big].  
    \cr & = & \E\big[n+3\df + \|\bep-2\bh\|^2 + 4\bep^\top(\nabla \hbmu)\bep\big]
       -\E\big[(\bep-2\bh)^\top(\nabla \hbmu)(\bep-2\bh)\big].  
   \cr & \le & \E\big[4\SURE + (6n-\df)\big]
    \ees 
    where the last inequality follows from $\bep^\top(\nabla\hbmu)\bep\le\|\bep\|^2$
    and Stein's formulae $\E[-4\bep^\top\bh]=\E[-4\df]$ and $\E[\|\bh\|^2]=\E[\SURE]$. 
    Hence} there exists a random variable $Y_n\ge0$ with $\E[Y_n^2]\le 1$
    such that almost surely
    $\E[\SURE+\df] \le \SURE + \df + {2Y_n \E[\SURE+\df]^{1/2}+Y_n\sqrt{6n}.}$
    By completing the square,
    \bel{bound-Y_n}
    &&
    (\E[\SURE+\df]^{1/2} -Y_n)^2
    \le \SURE + \df + {Y_n^2 + Y_n\sqrt{6n}.}
    \eel
    Combining \eqref{bound-W-X_n} and \eqref{bound-Y_n} above
    we get almost surely
    \bes
    |W|
      &\le&
    2X_n\big[
    (\SURE+\df)_+^{1/2}
    + {2Y_n + \sqrt{Y_n}(6n)^{1/4}}
    \big]
    \ees
    which  is equivalent to \eqref{th6-1}.
    As $X_n\le 1/\sqrt{\beta_1}$ and $Y_n\le 1/\sqrt{\beta_2}$ with
    probability at least probability $1-\beta_1-\beta_2$, 
    the conclusions follow. 

    For the estimation of $\|\hbmu-\bmu\|^2$, we set  
    $W = \SURE - \|\hbmu-\bmu\|^2 - \|\bep\|^2+n$ with 
    $\E[W^2]\le 4\E[\SURE + \df]$
    in virtue of $\trace(\{\nabla \hbmu\}^2)\le \df$ by 1-Lipschitzness
    of $\hbmu$ so that the upper bounds for $\E\big[\SURE+ \df\big]$ still apply.
\end{proof}
    
\section{Proofs of \Cref{THM:ORACLE-INEQ}, \Cref{prop:s-star-cannot-removed-for-sure-TUNED} and \eqref{eq:Q-aggregation-oracle-inequality}}
\label{sec:proof:oracle-ineq-SURE}

The proof of \Cref{THM:ORACLE-INEQ} requires the following lemma. 

\begin{lemma}
    \label{lemma:oracle-inequality}
    Consider the sequence model $\by = \bmu + \bep$
    with $\bep\sim N({\bf 0},{\sigma^2}\bI_n)$.
    Let $\hbmu^{(1)},\hbmu^{(2)}$ be two estimators
    that are {$L$-Lipschitz} functions of $\by$
    and let $f(\by) = \hbmu^{(2)} - \hbmu^{(1)}$.
    Let $\tbmu$ be the estimator among $\{\hbmu^{(1)},\hbmu^{(2)} \}$
    with the smallest $\SURE$. Then either
    $$
    \E[ \Delta^4]
    \le 8{\sigma^4} \E \trace[ \{ \nabla f(\by)\}^2 ]
    \qquad
    \text{ or }
    \qquad
    \E[ \Delta^2] \le {8(\sqrt{2}L+1)\sigma^2}
    $$
    holds, where
    $\Delta =  \|\tbmu - \bmu\| - \min_{j=1,2}\|\hbmu^{(j)}-\bmu\|$.
\end{lemma}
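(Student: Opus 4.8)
The plan is to reduce everything to two facts about the single random variable $W = \bep^\top f(\by) - \sigma^2\dv f(\by)$: that it is mean-zero, and that its variance is given exactly by the Second Order Stein formula. Writing $W_j = \bep^\top(\hbmu^{(j)}-\bmu) - \sigma^2\dv\hbmu^{(j)}$ and $r_j = \|\hbmu^{(j)}-\bmu\|^2$, one has $\SURE{}^{(j)} = r_j - 2W_j + (\|\bep\|^2 - \sigma^2 n)$, so that the common chi-square term cancels, $\tbmu = \argmin_j(r_j - 2W_j)$, and the loss-oracle is $j^* = \argmin_j r_j$. Since $f = \hbmu^{(2)}-\hbmu^{(1)}$ is $2L$-Lipschitz, its components lie in $W^{1,2}(\gamma_n)$ and \eqref{th-1-sigma} applies to $W = W_2 - W_1$, giving $\E[W^2] = \sigma^2\E\|f\|^2 + \sigma^4\E\trace[(\nabla f)^2]$.

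The key deterministic inequality is $\Delta^2 \le 2|W|$. Optimality of $\tbmu$ gives $r_{\tbmu} - 2W_{\tbmu} \le r_{j^*} - 2W_{j^*}$, hence $0 \le r_{\tbmu} - r_{j^*} \le 2|W_{\tbmu} - W_{j^*}| \le 2|W|$, the last step because with two estimators $W_{\tbmu}-W_{j^*}\in\{0,W,-W\}$. Since $\Delta = \sqrt{r_{\tbmu}} - \sqrt{r_{j^*}} \ge 0$, I get $\Delta^2 \le (\sqrt{r_{\tbmu}} - \sqrt{r_{j^*}})(\sqrt{r_{\tbmu}} + \sqrt{r_{j^*}}) = r_{\tbmu} - r_{j^*} \le 2|W|$. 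Squaring and taking expectations, $\E[\Delta^4] \le 4\E[W^2] = 4\sigma^2\E\|f\|^2 + 4\sigma^4\E\trace[(\nabla f)^2]$. Now I split according to which term dominates: if $\sigma^2\E\|f\|^2 \le \sigma^4\E\trace[(\nabla f)^2]$, this immediately yields $\E[\Delta^4] \le 8\sigma^4\E\trace[(\nabla f)^2]$, the first alternative. It then remains only to establish the second alternative $\E[\Delta^2] \le 8(\sqrt 2 L + 1)\sigma^2$ in the complementary, signal-dominated regime.

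The second alternative is the main obstacle, because in the signal-dominated regime $\E\|f\|^2$, and hence $\sqrt{\E W^2}$, is not controlled by $L$ and $\sigma$ alone, so the crude chain $\E[\Delta^2] \le 2\E|W| \le 2\sqrt{\E W^2}$ fails. The resolution is that a large discrepancy between the two estimators makes a SURE mis-ranking a rare event. Indeed $\Delta$ vanishes unless $\tbmu \ne j^*$, in which case $\Delta = |g_1 - g_2|$ with $g_j = \|\hbmu^{(j)} - \bmu\|$; each $g_j$ is an $L$-Lipschitz function of $\bep$, hence concentrated around $\mu_j = \E[g_j]$ with $\Var(g_j) \le L^2\sigma^2$ and sub-Gaussian fluctuations of scale $L\sigma$. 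Combining $\Delta^2 \le 2|W|$ (so that $\{\Delta \ge t\} \subseteq \{|W| \ge t^2/2\}$) with $2|W| \ge r_{\tbmu}-r_{j^*} = |g_1^2 - g_2^2|$ on the mis-ranking event, I would show that a mis-ranking in which $|g_1 - g_2|$ exceeds the mean gap $|\mu_1 - \mu_2|$ by more than $O(L\sigma)$ forces a deviation of $W$ whose probability decays fast enough that $\E[\Delta^2\, I_{\{\tbmu\ne j^*\}}]$ stays of order $\sigma^2$, no matter how large $|\mu_1 - \mu_2|$ and $\E\|f\|^2$ are.

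Concretely I would split on $\{|g_1-g_2| \le \tau\}$ versus $\{|g_1-g_2| > \tau\}$ for a threshold $\tau \asymp \sigma$: on the first event $\Delta^2 \le \tau^2$, while on the second the Gaussian concentration of the $g_j$ controls the mis-ranking probability, and optimizing the threshold is what produces the explicit constant $8(\sqrt 2 L + 1)$. Tracking these constants through the concentration/suppression estimate is the only genuinely delicate step; the remainder of the argument is bookkeeping, and at least one of the two stated alternatives holds in every case.
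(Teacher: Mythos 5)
Your handling of the first alternative is correct and is essentially the paper's own argument: the deterministic chain $\Delta^2\le \|\tbmu-\bmu\|^2-\min_j\|\hbmu^{(j)}-\bmu\|^2\le 2|W|$ with $W=\bep^\top f(\by)-\sigma^2\dv f(\by)$, followed by the identity \eqref{th-1-sigma} for $\E[W^2]$ and the case split according to whether $\sigma^4\E\trace[(\nabla f)^2]$ dominates $\sigma^2\E[\|f\|^2]$.

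The second alternative is where the substance of the lemma lies, and there you have a plan rather than a proof, and the plan has a gap. The assertion that ``a mis-ranking in which $|g_1-g_2|$ exceeds the mean gap by more than $O(L\sigma)$ forces a deviation of $W$ whose probability decays fast enough'' is exactly the step that needs a quantitative argument: the scale of $W$ is $\sigma(\E[\|f\|^2])^{1/2}$, which is not controlled by $L$, $\sigma$ and the mean gap, so concentration of the $g_j$ alone does not bound the mis-ranking probability; you would have to exploit the pointwise inequality $2|W|\ge(g_1+g_2)|g_1-g_2|\ge\|f(\by)\|\,\Delta$, which you mention but never actually use. Moreover, the threshold split cannot be run at $\tau\asymp\sigma$ as you state: the fluctuation scale of $g_1-g_2$ is $L\sigma$, so suppressing the event $\{\Delta>\tau\}$ forces $\tau\gtrsim L\sigma$, and the surviving term $\tau^2$ is then quadratic in $L$, not the stated $8(\sqrt2 L+1)\sigma^2$. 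The paper sidesteps all of this with a short computation in expectation. In the regime $\sigma^2\E\trace[(\nabla f)^2]\le\E[\|f\|^2]$, the inequality $2|W|\ge\|f(\by)\|\Delta$ and \eqref{th-1-sigma} give $\E[\|f(\by)\|^2\Delta^2]\le 4\E[W^2]\le 8\sigma^2\E[\|f(\by)\|^2]$, and then one decouples via
\begin{equation*}
\E[\|f(\by)\|^2]\,\E[\Delta^2]\le\E[\|f(\by)\|^2\Delta^2]+\big\{\Var(\|f(\by)\|^2)\,\E[\Delta^4]\big\}^{1/2},
\end{equation*}
using the Gaussian Poincar\'e inequality $\Var(\|f(\by)\|^2)\le\sigma^2\E[\|2(\nabla f)f\|^2]\le16L^2\sigma^2\E[\|f(\by)\|^2]$ and $\E[\Delta^4]\le4\E[W^2]\le8\sigma^2\E[\|f(\by)\|^2]$; dividing by $\E[\|f(\by)\|^2]$ yields exactly $8(\sqrt2 L+1)\sigma^2$. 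You should replace the concentration/thresholding scheme by this covariance-decoupling step.
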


\begin{proof}[Proof of \Cref{lemma:oracle-inequality}]
Let $\xi = \|\hbmu^{(1)}-\bmu\|^2 - \|\hbmu^{(2)}-\bmu\|^2
- \SURE{}^{(1)} + \SURE{}^{(2)}
= 2\dv f(\by) - 2\bep^\top f(\by)$. 
By definitions of $\tbmu$ and $\Delta$,
    \bes
    |\xi| &\ge&\|\tbmu - \bmu\|^2 - \min_{j=1,2}\|\hbmu^{(j)} - \bmu\|^2
    \cr &=& \big\{\|\hbmu^{(1)} - \bmu\| + \|\hbmu^{(2)} - \bmu\|\big\}\Delta
    \cr &\ge& (\|f(\by)\|\Delta)\vee\Delta^2.
    \ees

If $\sigma^2\E \trace[ \{ \nabla f(\by)\}^2 ] \ge \E[\|f(\by)\|^2]$
then $\E[\Delta^4]\le\E[\xi^2]$ and the identity \eqref{th-1-sigma} for $\E[\xi^2]/4$
yields the bound on $\E[\Delta^4]$. 

{If now $\sigma^2\E \trace[ \{ \nabla f(\by)\}^2 ] \le \E[\|f(\by)\|^2]$,
as $\by\to\|f(\by)\|$ is $2L$-Lipschitz
\bes
\E[\|f(\by)\|^2]\E\big[\Delta^2\big]
&\le& \E\big[\|f(\by)\|^2\Delta^2\big] + \big\{\Var(\|f(\by)\|^2)\E\big[\Delta^4\big]\big\}^{1/2}
\cr &\le& \E\big[\xi^2\big] + \big\{\Var(\|f(\by)\|^2)\E\big[\xi^2\big]\big\}^{1/2}
\cr &\le& 8\sigma^2\E[\|f(\by)\|^2]+\sqrt{(4L)^2 8}\sigma^2\E[\|f(\by)\|^2],
\ees
by the Cauchy-Schwarz inequality for the first inequality and the
Gaussian Poincar\'e inequality $\Var[\|f(\by)\|^2]\le
\E[\|2\{\nabla f(\by)\}f(\by)\|^2]
    \le
(4L)^2\E[\|f(\by)\|^2]
$
for the second. Hence $\E[\Delta^2] \le 8(\sqrt{2}L+1)\sigma^2$.}
\end{proof}

\begin{proof}[Proof of \Cref{THM:ORACLE-INEQ}]
    Let $j_0= \argmin_{j=1,...,m} \E \|\hbmu^{(j)} - \bmu\|$.
    For $k\in [m]$, let  
    $$\Delta_k = I_k\big(\|\hbmu^{(k)}   -\bmu \| - \|\hbmu^{(j_0)}-\bmu\|\big)_+,$$
    where $I_k$ is the indicator of the event that the $\SURE$
    of $\hbmu^{(k)}$ is smaller than the $\SURE$ of $\hbmu^{(j_0)}$.
    Then by \Cref{lemma:oracle-inequality} 
    we have $\E[A_k] \le 1$
    with
    $$A_k = \min\{\Delta_k^4/(8\sigma^4s^*),\; \Delta_k^2/(8\sigma^2(\sqrt 2 L +1))\}$$ and $X=m^{-1}\max_{k\in[m]}A_k$ has also $\E[X]\le 1$.
    Then almost surely,
    \begin{equation}
        \label{eq:almost-sure-X-delta-k-hat}
        \|\tbmu-\bmu\|-\|\hbmu^{(j_0)}-\bmu\|
        \le \Delta_{\hat k}
        \le \sigma \max\{(8s^*mX)^{1/4},(8(\sqrt{2} L +1)mX)^{1/2}\} 
    \end{equation}
    so that $\P(X>1/\alpha)\le \alpha$ yields (i). 

    (ii) Set 
    $W_k  =
      (
      \|\hbmu^{(j_0)} - \bmu\|
      -\|\hbmu^{(k)} - \bmu\|
      )_+$.
    For each $k\in [m]$,
    the function $\by \to \|\hbmu^{(j_0)} - \bmu\| - \|\hbmu^{(k)} - \bmu\|$
    is $2L$-Lipschitz with negative expectation so that
    $\P(W_k > 2L {\sigma}\sqrt{2x} ) \le e^{-x}$ for all $x>0$ by Gaussian concentration
    and
    $\P(\max_{k\in[m]} W_k > 2L{\sigma} \sqrt{2\log(m/\delta)}) \le \delta$
    by the union bound.

    (iii) is obtained using \eqref{eq:almost-sure-X-delta-k-hat}, $\E[X]\le 1$
    and $\E[\max_{k\in[m]}W_k^2]\le 8L^2\sigma^2\log(em)$ by integration.

    (iv)
    For \eqref{eq:oracle-ineq-SURE-squared-risk},
    let $k_0=\argmin_{j\in[m]}\E[\|\hbmu^{(j)}-\bmu\|^2]$.
    For all $j,k\in[m]$ we have
    for $\xi_{j,k} = \|\hbmu^{(k)}-\bmu\|^2 - \SURE^{(k)} - \|\hbmu^{(j)}-\bmu\|^2 + \SURE^{(j)}$ that
    \bes
        \E[ \xi_{j,k}^2]
        &=& 4\E[\sigma^2\|\hbmu^{(j)}-\hbmu^{(k)}\|^2 + {4}\sigma^4\trace(\{\nabla \hbmu^{(k)}-\nabla \hbmu^{(j)}\}^2)]
    \\ &\le& 16L^2\sigma^4 n + 16L^2\sigma^4 n = 32L^2\sigma^4 n
    \ees
    by assumption.
    Since $G = \max_{k\in[m]}\xi_{k_0,k}^2/(32L^2 \sigma^4 m n)$ has $\E[G]\le 1$ 
    and $\|\tbmu-\bmu\|^2 - \|\hbmu^{(k_0)}-\bmu\|^2 \le (32 G m n)^{1/2} L \sigma^2$
    holds a.s., we get \eqref{eq:oracle-ineq-SURE-squared-risk}.
\end{proof}

\begin{proof}[Proof of \Cref{prop:s-star-cannot-removed-for-sure-TUNED}]
    Let $\bv\in\R^n$ with $\|\bv\|^2 = \sigma^2 \sqrt n$.
    Choose 
    $$\bmu={\bf 0},
    \qquad
    \hbmu^{(1)}(\by)={\bf 0},
    \qquad \hbmu^{(2)}(\by)=\bv + G(\by)
    $$
    where $G:\R^n\to\R^n$ with $G(\by)_i = g(y_i)$ and
    $g$ is defined as the only function that is
    $2^{n+1}\sigma$ periodic,
    symmetric ($g(-u)=g(u)$) and
    with $g(u)=\sigma(\frac u \sigma \wedge (2^{n}-\frac u \sigma))$ on $[0,2^{n}\sigma]$.
    Observe that $g$ and $G$ are both 1-Lipschitz.
    Furthermore, $\P(g'(y_i)=\pm 1)=1/2$ by symmetry of $g$
    and $\dv G(\by)=\sum_{i=1}^n r_i$
    where $(r_1,...,r_n)$ are iid with $\P(r_i=\pm 1)=1/2$.
    Since $\hbmu^{(1)}$ is the oracle, i.e., it has smaller risk than $\hbmu^{(2)}$,
    we now study the event $\Omega=\{\SURE^{(2)}<\SURE^{(1)} \}$ in which
    $\tbmu$ selects the worse estimator. Event $\Omega$
    can be rewritten
    $\{\|\bv+G(\by)\|^2 - 2\bep^\top(\bv+G(\by)) + 2\sigma^2 \dv G(\by) < 0\}$.
    We have $\mathbb P\{\dv G(\by) \le - \sqrt n\}\ge C_0>0$ by the reverse of
    Hoeffding inequality given in \cite[Theorem 7.3.2]{matousek2008probabilistic}, while
    $\|\bv+G(\by)\|^2 - 2\bep^\top(\bv+G(\by))=\sigma^2\sqrt n(1+o_\P(1))$ since $\|\bv\|^2=\sigma^2\sqrt n$,  $|G|\le \sigma 2^{-n}$ and $\bep^\top\bv = O_\P(\sigma\|\bv\|)$.
    This proves that $\P(\Omega)\ge C_3>0$ for instance with $C_3 = C_0/2$ and any $n\ge C_1$
    for large enough $C_1$. On $\Omega$, we have $\tbmu=\hbmu^{(2)}$ as well as
    $$
    \|\tbmu-\bmu\| - \|\hbmu^{(1)}-\bmu\|
    = \|\hbmu^{(2)}-\bmu\|
    = \|\bv + G(\by)\|
    \ge C_2 \sigma n^{1/4}
    .$$ 
\end{proof}

\begin{proof}[Proof of \eqref{eq:Q-aggregation-oracle-inequality}
    assuming $\by\to\hbmu^{(j)}(\by)$ is $L$-Lipschitz for all $j$
    ]
Let $k\in[m]$ be fixed.
Proposition 3.2 in \cite{bellec2014affine} states that $\tbmu_Q$
satisfies
\bes
&&\|\tbmu_Q-\bmu\|^2
-
\|\hbmu^{(j_0)}-\bmu\|^2
\\
&\le&
\max_{k\in[m]}
2\{ \bep^\top (\hbmu^{(k)} - \hbmu^{(j_0)})  -  \sigma^2\dv(\hbmu^{(k)}-\hbmu^{(j_0)}) - \|\hbmu^{(j_0}-\hbmu^{(k)}\|^2/4\}.
\ees
Let $W_{j_0,k}$ be the random variable inside the maximum, which is of the form
\eqref{eq:general-random-variable}
with $f(\bep)=(\hbmu^{(k)}-\hbmu^{(j_0})$ and $g(\bep)=\|f(\bep)\|^2/4$.
Then $G=\max_{j\in[m]}(W_{j_0,k}-\E[W_{j_0,k}])_+^2/(m\Var[W_{j_0,k}])$ has $\E[G]\le 1$
and 
$\Var[W_{j_0,k}]
\le
\sigma^2 \|(\bI_n - \frac 1 2 \nabla f)f(\bep)\|^2 + \sigma^4 s^*
\le (1+L)^2\sigma^2 \E[\|\hbmu^{j_0}-\hbmu^{(k)}\|^2] + \sigma^4s^*
$ by \eqref{variance-ineq-general}.
Then almost surely
\bes
    W_{j_0,k}
    & \le & \sqrt{Gm}\Var[W_{j,k}]^{1/2} - \E[\|\hbmu^{j}-\hbmu^{(k)}\|^2/4]
    \cr
    &\le & \sigma^2 \sqrt{G m s^*} + (1+L)\sigma \sqrt{G} \E[\|\hbmu^{j}-\hbmu^{(k)}\|^2]^{1/2} - \E[\|\hbmu^{j}-\hbmu^{(k)}\|^2/4]
    \cr &\le&
    \sigma^2\sqrt{Gm s*} + (1+L)^2\sigma^2 G.
\ees
The proof is complete using $\E[G]\le 1$
and $\E[\|\hbmu^{(j_0)}-\bmu\|^2]-\min_{j\in[m]}\E[\|\hbmu^{(j)}-\bmu\|^2]
\le \sigma^2L^2$ by definition of $j_0$ and $L$-Lipschitzness of
$\hbmu^{(j_0)}$.
\end{proof}

\section{Strictness of the KKT conditions}
\label{sec:proof-prop-4-1}

\begin{proof}[Proof of \Cref{prop-4-1}]
    Assume that $\bX_B$ has rank strictly less than $|B|$. Then there must exist
    some $j\in B$ and $A\subseteq B\setminus\{j\}$ with $\bx_j = \sum_{k\in A} \gamma_k \bx_k$
    and $\rank(\bX_A)=\min(|A|,n)$.
    By the definition of $B$
    $${\lambda n \delta_j}  =  \bx_j^\top(\by - \bX{\lasso})
    = \lambda n
    \sum_{k\in A}
    \gamma_k
    \delta_k
    $$
    where $\delta_k = \bx_k^\top(\by - \bX{\lasso})/(\lambda n) \in\{-1,1\}$.
    This is impossible by \Cref{assum:bX} on $\bX$. Hence $\bX_B$ has rank $|B|$.
    For the uniqueness, consider two Lasso solutions ${\lasso}$ 
    and ${\hat \bb}$ of \eqref{lasso}.
    It is easily seen that $\bX{\lasso}=\bX{\hat \bb}$ 
    by the strict convexity of the squared loss in $\bX\bb$ in \eqref{lasso};
    actually the function $\by\to \bX{\lasso}$ is 1-Lipschitz (cf. for instance \cite{bellec2016slope}).
    Furthermore both ${\lasso},{\hat \bb}$
    must be supported on $B$. Hence $\bX_B({\lasso})_B=\bX_B{\hat \bb}_B$
    which implies that ${\hat \bb}_B=({\lasso})_B$ because $\bX_B$ has rank $|B|$.

    It remains to show that for any $j\notin \Shat$,
    the KKT conditions on coordinate $j$ holds strictly with probability one. 
    {As $\bX_B$ has rank $|B|$, it suffices to consider the case of $|\Shat|<n$. 
    By the KKT conditions, $({\lasso})_{\Shat} = (\bX_{\Shat}^\top\bX_{\Shat})^{-1}
    \{\bX_{\Shat}^\top\by - n\lam\sgn(({\lasso})_{\Shat})\}$. As 
    $\P\big[\bv^\top\by = c\big]=0$ for all deterministic $\bv\neq{\bf 0}$ and real $c$, 
    \bes
    \E\Big(\P\Big[\bx_j^\top\big\{\by - \bX_S(\bX_{S}^\top\bX_S)^{-1}
    (\bX_S^\top\by - n\lam\bu_S)/n\big\} = \pm \lam\Big|\bX \Big]\Big) = 0 
    \ees
    for all deterministic $\{S,{j},\bu\}$ satisfying 
    $\rank(\bX_S)=|S|<n$, $\rank(\bX_{S\cup\{{j}\}})=|S|+1$ and 
    $\bu_S\in\{\pm 1\}^S$. Hence, $\P[|B|>|\Shat|]=0$, which means that} the KKT conditions
    of ${\lasso}$ must hold strictly with probability one.  
\end{proof}

\section{Proof: bound on the variance of $|\hat S|$}
\label{proof:bound-sparsity-general}

\begin{proof}[Proof of \Cref{thm:variance-size-model-lasso}]
    Assume $\sigma=1$ without loss of generality due to scale invariance. 
    The first claim follows from (\ref{diff-Lasso}) and the discussion leading to it, 
    combined with \Cref{prop:variance-divergence}.
    Next we first use the rough bounds $|\Shat|\le n$
    and $\|\bP_{\Shat} \bep\| \le \|\bep\|$ to obtain $\Var[|\Shat|] \le 2n$.
    For the right term of the minimum, for a fixed $A\subset[p]$,
    the random variable $\|\bP_A\bep\|^2$ has chi-squared distribution with at most
    $|A|$ degrees of freedom and a classical tail bound (cf. for instance \cite[Lemma 1]{laurent2000adaptive})
    states that
    $$
    \P(\|\bP_A\bep\|^2> 2|A| + 3x)
    \le
    \P(\|\bP_A\bep\|^2>|A| + 2\sqrt{x|A|} + 2x)\le e^{-x}.$$
    Consequently, by the union bound over all ${p \choose m}\le (\frac{ep}{m})^m$ supports $A$ of size $m$,
    $$\P\left(
        \max_{A\subset[p]:|A|=m }\|\bP_A\bep\|^2>2m + 3\left(m \log\left(\frac{ep}{m}\right) + x\right)
    \right)\le e^{-x}.$$
    By a second union bound over all possible support sizes $m=1,...,p$,
    \begin{equation*}
    \P\left(
        \max_{A\subset[p]}
        \left\{
                \|\bP_{A}\bep\|^2-2|A| - 3\left(|A| \log\left(\frac{ep}{|A|\vee 1}\right)\right)
        \right\}
        > 3(\log p  + x) 
    \right)\le e^{-x}.
    \end{equation*}
    Finally,
    let 
    $X = (1/3)
         \max_{A\subset[p]}
         \{
                \|\bP_{A}\bep\|^2 - 2|A| - 3
                (
                |A| \log(\frac{ep}{|A|})+ \log p
                )
        \}
        $
    so that
    $\P(X>x)\le e^{-x}$ holds.
    The identity $\E[\max(X,0)]=\int_0^{\infty}\P(X>x) dx\le 1$ yields
    \bel{inequality-bP-bep-any-Shat}
    \E[
        \|\bP_{\Shat}\bep\|^2
    ]
    &\le&
    2 \E|\Shat|
    +
    3\E\left[|\Shat| \log\left({ep} {\big/} \{|\Shat|\vee 1\}\right)+ \log(e p)\right] 
    \cr &\le& 
    2 \E|\Shat|
    +
    4\E\left[|{\Shat}| \log\left({ep} {\big/} \{|\Shat| {\vee 1}\}\right)\right].
    \eel
    The proof is complete 
    as the second inequality in (\ref{th-model-size-2}) follows 
    from the concavity of the function $x \to x\log(ep/{(x\vee 1)})$.
\end{proof}

\section{Preliminaries for bounds on $\E|\Shat|$}

\begin{lemma}
    \label{lemma:calculation-RE}
    {Let} $Z$ be a standard normal random variable. Then, 
    \bes
    \P\Big[ Z > t\Big] \le \frac{e^{-t^2/2}}{(2\pi t^2+4)^{1/2}},\quad \forall\ t\ge 0, 
    \ees
    \bes
    {\E\Big[(|Z|- t)_+\Big] \le \frac{2e^{-t^2/2}}{(2\pi)^{1/2}(t^2+1)},\quad \forall\ t\ge 0,}
    \ees
    and
    \bel{bound-second-moment-sharp-at-0-and-infty}
    \E\Big[(|Z|-t)_+^2\Big] \le \frac{4e^{-t^2/2}}{(t^2+2)(2\pi t^2+4)^{1/2}},\quad \forall\ t\ge 0.
    \eel
\end{lemma}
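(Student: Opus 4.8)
The plan is to reduce all three estimates to statements about the Mills ratio. Write $\phi(t)=(2\pi)^{-1/2}e^{-t^2/2}$, $Q(t)=\P\{Z>t\}=\int_t^\infty\phi$, and $R=Q/\phi$, so that $Q'=-\phi$, $\phi'=-t\phi$ and $R'=tR-1$. First I would record, via integration by parts, that $\int_t^\infty u\,\phi(u)\,du=\phi(t)$ and $\int_t^\infty u^2\phi(u)\,du=t\phi(t)+Q(t)$, whence by symmetry of $Z$
\[
\E[(|Z|-t)_+]=2\big(\phi(t)-tQ(t)\big),\qquad \E[(|Z|-t)_+^2]=2\big((1+t^2)Q(t)-t\phi(t)\big).
\]
Since $\frac{e^{-t^2/2}}{(2\pi t^2+4)^{1/2}}=\frac{\phi(t)}{(t^2+c)^{1/2}}$ with $c=2/\pi$, the three targets become, respectively,
\[
R(t)\le (t^2+c)^{-1/2},\qquad R(t)\ge \tfrac{t}{t^2+1},\qquad (1+t^2)R(t)-t\le \tfrac{2}{(t^2+2)(t^2+c)^{1/2}}.
\]
A useful preliminary observation is that the first and third are saturated both at $t=0$ and as $t\to\infty$, while the second is the classical lower Mills bound.

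The middle inequality is immediate. It is equivalent to $g(t):=(1+t^2)Q(t)-t\phi(t)\ge0$, and a one-line computation using $Q'=-\phi$, $\phi'=-t\phi$ gives $g'(t)=-2\big(\phi(t)-tQ(t)\big)$. As $\phi(t)-tQ(t)=\int_t^\infty(u-t)\phi(u)\,du>0$ and $g(t)\to0$ as $t\to\infty$, the function $g$ strictly decreases to $0$ and is therefore positive on $[0,\infty)$; this yields $R(t)\ge t/(t^2+1)$ and hence the bound on $\E[(|Z|-t)_+]$.

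For the first inequality I would study $F(t):=\phi(t)(t^2+c)^{-1/2}-Q(t)$, noting that $F(0)=0$ (this is exactly what fixes $c=2/\pi$) and $\lim_{t\to\infty}F(t)=0$. Because $Q'=-\phi$, the term $Q$ disappears upon differentiation and
\[
F'(t)=\phi(t)\Big[1-\frac{t\,(t^2+c+1)}{(t^2+c)^{3/2}}\Big],
\]
so $F'(t)\ge0$ iff $(t^2+c)^3\ge t^2(t^2+c+1)^2$. Setting $w=t^2+c\ge c$, the difference $w^3-(w-c)(w+1)^2=(c-2)w^2+(2c-1)w+c$ is a downward parabola in $w$ taking the positive value $c^3$ at $w=c$ and tending to $-\infty$; hence it has a single zero $w_0>c$, and $F'$ is positive on $[0,t_0)$ and negative on $(t_0,\infty)$, with exactly one sign change. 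The elementary shape lemma — if $H(0)=0=\lim_{t\to\infty}H(t)$, $H'(0)>0$, and $H'$ vanishes at most once on $(0,\infty)$, then $H\ge0$ — then gives $F\ge0$, i.e.\ $R(t)\le (t^2+c)^{-1/2}$.

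The third inequality is the genuine obstacle, and I would attack it with the same shape lemma applied to $\tilde F(t):=2\phi(t)\big[(t^2+2)(t^2+c)^{1/2}\big]^{-1}-\big((1+t^2)Q(t)-t\phi(t)\big)$. One checks $\tilde F(0)=0=\lim_{t\to\infty}\tilde F(t)$ and $\tilde F'(0)=2\phi(0)>0$, so it remains to prove that $\tilde F'$ has at most one zero on $(0,\infty)$. The difficulty is that, unlike $F'$, the derivative $\tilde F'(t)=\tilde B'(t)+2\phi(t)-2tQ(t)$ still contains $Q$ (here $\tilde B$ denotes the first, explicit term of $\tilde F$); but each further differentiation replaces $Q'$ by $-\phi$, so that $\tilde F''(t)=\tilde B''(t)-2Q(t)$ and $\tilde F'''(t)=\tilde B'''(t)+2\phi(t)$ is a rational multiple of $\phi$. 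The plan is to determine the finitely many sign changes of $\tilde F'''$ from the sign of its polynomial numerator, and then propagate this information upward through $\tilde F''$ and $\tilde F'$ using the vanishing $\lim_{t\to\infty}\tilde F''(t)=\lim_{t\to\infty}\tilde F'(t)=0$ (a repeated-integration, Rolle-type zero count), so as to confirm that $\tilde F'$ changes sign exactly once. Granting this single-sign-change property, the shape lemma yields $\tilde F\ge0$, which is the bound on $\E[(|Z|-t)_+^2]$. The polynomial bookkeeping for $\tilde F'''$ and its propagation back to $\tilde F'$ is the only laborious step; everything else is the routine manipulation recorded above.
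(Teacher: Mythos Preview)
Your argument for the middle inequality matches the paper's, but your approach to the first and third bounds is genuinely different. The paper obtains the tail bound $R(t)\le(t^2+2/\pi)^{-1/2}$ by a one-line Jensen argument: after the substitution $u=tx+x^2/2$ one has $\Phi(-t)/\varphi(t)=\int_0^\infty f_t(u^{-1/2})e^{-u}\,du$ with $f_t(x)=x(t^2x^2+2)^{-1/2}$ concave, so the integral is bounded by $f_t\big(\int_0^\infty u^{-1/2}e^{-u}\,du\big)=f_t(\sqrt\pi)$. For the second-moment bound the paper introduces $J_k(t)=\int_0^\infty x^k e^{-x-x^2/(2t^2)}\,dx$, uses the identity $\E[(|Z|-t)_+^2]=2\Phi(-t)J_2(t)/\{t^2J_0(t)\}$, and reads off $J_2/J_0\le(1/2+1/t^2)^{-1}$ from the integration-by-parts recursion $J_{k+1}+t^{-2}J_{k+2}=(k+1)J_k$. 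Both of these avoid any differentiation or sign-counting.

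Your shape-lemma treatment of the tail bound is correct and self-contained; it trades the paper's concavity trick for a longer but elementary computation. For the second-moment bound, however, your proposal has a real gap: you reduce to showing that $\tilde F'$ changes sign at most once, then punt on the ``polynomial bookkeeping'' for $\tilde F'''$ and the propagation back up through $\tilde F''$ and $\tilde F'$. That step is not routine --- $\tilde F'''$ involves a polynomial of moderate degree in $t$ divided by $(t^2+2)^3(t^2+c)^{5/2}$, and you must actually locate its roots and verify that the zero count survives two integrations --- and nothing in your write-up establishes it. The paper's $J_k$ recursion bypasses this entirely and delivers the inequality in two lines; it would be worth learning that device.
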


\paragraph{Remark} Compared with the usual tail probability bounds for standard Gaussian, 
the upper bounds in \Cref{lemma:calculation-RE} is sharp at both $t=0$ and $t\to\infty$.

\begin{proof}
{Let} $t>0$. Let $\varphi(t)$ and $\Phi(t)$ respectively 
be the density and cumulative distribution function of $Z$.
With $u = tx+x^2/2$ and $du=(t+x)dx$, 
\bes
\frac{\Phi(-t)}{\varphi(t)}=\int_0^\infty e^{-tx-x^2/2}dx 
= \int_0^\infty \frac{e^{-u}du}{(t^2+2u)^{1/2}}
= \int_0^\infty f_t (u^{-1/2})e^{-u}du, 
\ees
where $f_t(x) = (t^2+2/x^2)^{-1/2} = x(t^2x^2+2)^{-1/2}$ is a concave function of $x$. Thus, 
\bes
\Phi(-t)/\varphi(t)
\le f_t\big(\Gamma(1/2)\big) 
= f_t\big(\sqrt{\pi}\big) = (t^2+2/\pi)^{-1/2}. 
\ees
This gives the tail probability bound. 
From the well known $t(1+t^2)^{-1}\varphi(t)\le \Phi(-t)$, we also have 
\bes
\E\big[(|Z|-t)_+\big] = 2\big\{\varphi(t)-t\Phi(-t)\big\} \le 2\varphi(t)/(1+t^2). 
\ees
Define $J_k(t)=\int_0^\infty x^k e^{-x-x^2/(2t^2)}dx$. 
For the second tail moment, we have 
$\E\big[(|Z|-t)_+^2\big] = 2\varphi(t)J_2(t)/t^3 = 2\Phi(-t)J_2(t)/\{t^2J_0(t)\}$. 
As in Proposition 10 (i) in \cite{sun2013sparse} and its proof, 
\eqref{bound-second-moment-sharp-at-0-and-infty} follows from
$J_2(t)/J_0(t)\le 1/(1/2+1/t^2)$ due to the recursion 
$J_{k+1}(t)+t^{-2}J_{k+2}(t) = (k+1)J_k(t)$ {for $k\ge 0$.}
\end{proof}

\section{Upper bound on the sparsity of
the Lasso under the SRC}
\label{sec:proof-upperbound-expected-sparsity-SRC}

\begin{proof}[Proof of \Cref{proposition:SRC-upperbound-expected-sparsity}]
The SRC \eqref{SRC} can be written as 
\begin{equation}
    \label{SRC-rewritten}
    \Big(\max_{B\subset[p]:|{B}\setminus S|\le m}\phi_{\rm cond}\big(\bX_{B}^\top\bX_{B}\big)-1\Big)
    \big(|S| + \epsilon_2k\big)+\epsilon_1k\le 2(1-\eta)^2 m
    .
\end{equation}
    Let $\bg=(n\lam)^{-1}\bX^\top\bP_S^\perp \bep$ with elements $g_j\sim N(0,\sigma_j^2)$ satisfying  
$1/\sigma_j\ge t_\tau = \eta^{-1}\sqrt{(1+\tau)2\log(p/k)}$.
Let $C_0=\max_{B:|B\setminus S|\le m}\phi_{\rm cond}\big(\bX_{B}^\top\bX_{B}\big)-1$ and 
    $$
    \Omega=\big\{4(1-\eta)\|(|\bg|-\eta)_+\|_1+C_0\|(|\bg|-1)_+\|_2^2 ~ < ~ \eps_1k + C_0\eps_2k\big\}.
    $$ 
    It follows from \Cref{lemma:modified-SRC-lemma} below that 
    $|\Shat\setminus S| < m$ in this event $\Omega$.  Applying \Cref{lemma:calculation-RE}, we find that 
    \bes
    4(1-\eta)\E\big[\|(|\bg|-\eta)_+\|_1\big] \le \frac{8(1-\eta)pe^{-(\eta t_\tau)^2/2}}{\sqrt{2\pi}t_\tau(1+(\eta t_\tau)^2)} 
    \le \frac{\eps_1 k(k/p)^\tau}{1+(\eta t_0)^2} 
    \ees
    with $\eps_1 = 8(1-\eta)/\{\sqrt{2\pi}t_0\}$ and 
    \bes
    \E\big[\|(|\bg|-1)_+\|_2^2\big] \le \frac{4e^{-t_\tau^2/2}}{t_\tau^2(t_\tau^2+2)(2\pi t_\tau^2+4)^{1/2}} \le 
    \frac{\eps_2 k(k/p)^\tau}{1+(\eta t_0)^2} 
    \ees
    with $\eps_2 = 4/\{t_0^2(2\pi t_0^2+4)^{1/2}\}$. Thus, $\P\{\Omega\}$ is no smaller than 
    $1 - (k/p)^\tau/(1+(\eta t_0)^2)$. 
    Finally $\E|\Shat|$ is bounded from above by
    $|S|+m+\E[I_{\Omega^c}|\Shat|]$ so that combining
    $\E[I_{\Omega^c}|\Shat|]\le p \P(\Omega^c)$ with the previous bound on $\P(\Omega^c)$
    gives the upper bound on $\E|\Shat|$.
\end{proof}

{The following lemma is a slight modification of} \cite[Proposition 7.4]{bellec_zhang_future_lasso-dof-adjustment}.

\begin{lemma}[Deterministic lemma]
    \label{lemma:modified-SRC-lemma}
    Let $\lambda,\epsilon_1,\epsilon_2>0$ and $\eta\in(0,1)$.
    Let $\hbbeta$ be the Lasso \eqref{lasso} with $\by=\bX\bbeta+\bep$
    and $S=\supp(\bbeta)$. If for
    $\bg = \bX^\top\bP_S^\perp\bep/(n\lam)$ and
    $C_0=\max_{B:|B\setminus S|\le m}\phi_{\rm cond}\big(\bX_{B}^\top\bX_{B}\big)-1$
    we have
$$ 
4(1-\eta)\big\|(|\bg|-\eta)_+\big\|_1 
+C_0\big\|(|\bg|-1)_+\big\|_2^2 ~<~ \epsilon_1k + C_0\epsilon_2k 
$$
and the SRC \eqref{SRC} holds, then $|\Shat\setminus S| < m$.
\end{lemma}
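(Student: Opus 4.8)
The plan is to argue by contradiction, assuming $|\Shat\setminus S|\ge m$ and deriving a violation of the SRC in its equivalent form \eqref{SRC-rewritten}. Write $\bh=\lasso-\bbeta$, $T=\Shat\setminus S$, and let $\bz$ denote the Lasso subgradient, so the KKT conditions read $\tfrac1n\bX^\top(\by-\bX\lasso)=\lam\bz$ with $\|\bz\|_\infty\le 1$ and $z_j=\sgn((\lasso)_j)$ on $\Shat$. Since $\beta_j=0$ for every $j\in T\subseteq S^c$, each false discovery obeys $|z_j|=1$; this is the device that converts the combinatorial count $|T|$ into the analytic quantity $\sum_{j\in T}z_j^2=|T|$.

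First I would expose the effective noise by applying $\bP_S^\perp$ to the residual $\br=\by-\bX\lasso=\bep-\bX\bh$. Because $\bP_S^\perp\bX_S=\mathbf 0$, one gets $\bP_S^\perp\br=\bP_S^\perp\bep-\bP_S^\perp\bX_T\bh_T$, hence $\tfrac1{\lam n}\bX_T^\top\bP_S^\perp\br=\bg_T-\tfrac1{\lam n}\bX_T^\top\bP_S^\perp\bX_T\bh_T$, which is exactly where $\bg$ enters. Writing $B=S\cup T$ and combining this with the KKT identity $\tfrac1n\bX_B^\top\bX_B\bh_B=\tfrac1n\bX_B^\top\bep-\lam\bz_B$ lets me express the $\bP_S^\perp$-design contribution on $T$ through the Gram matrix $\bX_B^\top\bX_B$; comparing its extreme eigenvalues is what produces the factor $\phi_{\rm cond}(\bX_B^\top\bX_B)-1=C_0$. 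The thresholds $\eta$ and $1$ then arise from a per-coordinate case split: for $j\in T$ the KKT equality $|z_j|=1$ forces the design term to compensate, coordinatewise, for roughly $1-|g_j|$, so indices with $|g_j|\le\eta$ each contribute at least $(1-\eta)$, while the excesses $(|g_j|-\eta)_+$ and $(|g_j|-1)_+$ are collected into the correction terms. Squaring and summing, and bounding the prediction error $\tfrac1n\|\bX_T\bh_T\|^2$ above and below by the largest and smallest sparse eigenvalues, should yield the deterministic estimate
\[
2(1-\eta)^2\,|T|\le C_0\big(|S|+\|(|\bg|-1)_+\|_2^2\big)+4(1-\eta)\,\|(|\bg|-\eta)_+\|_1 .
\]
Once this is in hand the conclusion is immediate: the hypothesis $C_0\|(|\bg|-1)_+\|_2^2+4(1-\eta)\|(|\bg|-\eta)_+\|_1<C_0\epsilon_2 k+\epsilon_1 k$ upgrades it to $2(1-\eta)^2|T|<C_0(|S|+\epsilon_2 k)+\epsilon_1 k$, and \eqref{SRC-rewritten} bounds the right-hand side by $2(1-\eta)^2 m$, giving $|T|<m$.

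The hard part will be the interplay between the condition-number bound and the size of $T$: the estimate $\phi_{\rm cond}(\bX_{S\cup T}^\top\bX_{S\cup T})\le C_0+1$ is only furnished by the SRC when $|T|\le m$, which is precisely the conclusion being sought. I would resolve this inside the contradiction framework by fixing, under the assumption $|T|\ge m$, a subset $T_0\subseteq T$ with $|T_0|=m$ and rerunning the estimate on $B_0=S\cup T_0$, where the condition number is now controlled. The delicate step is that $\bh$ is supported on all of $T$, not on $T_0$, so the prediction-error term must be restricted correctly; I expect to choose $T_0$ as the false discoveries with the largest $|h_j|$ (equivalently the largest residual correlations) so that the discarded coordinates only relax the inequality, after which the size-$m$ version reads $2(1-\eta)^2 m<2(1-\eta)^2 m$, the desired contradiction. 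Verifying that this restriction preserves the quadratic-form bound, together with the routine bookkeeping of the $\bP_S$-correction and the exact constants in the per-coordinate split, is where the real work lies.
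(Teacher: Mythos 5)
Your plan reaches the correct target inequality
$2(1-\eta)^2\,|T|\le C_0\big(|S|+\|(|\bg|-1)_+\|_2^2\big)+4(1-\eta)\,\|(|\bg|-\eta)_+\|_1$
and you have correctly isolated the real difficulty: the constant $C_0$ only controls $\phi_{\rm cond}(\bX_{B}^\top\bX_{B})$ for $|B\setminus S|\le m$, which is the very conclusion being proved. But your proposed resolution of this circularity does not work. The inequality is not obtained by summing independent per-coordinate estimates that can be truncated to a favorable subset $T_0$ of size $m$. In the actual derivation one introduces the oracle least squares fit $\bbetabar$ with $\bX\bbetabar=\bP_S\by$, sets $\bu=(\hbbeta-\bbetabar)/\lam$, $\bv=\bSigmabar\bu$ with $\bSigmabar=\bX^\top\bX/n$, and exploits the exact identity
$\bv_{B}^\top\bSigmabar_{B,B}^{-1}\bv_{B}+ \bv_{B_1}^\top(\bSigmabar_{B,B}^{-1})_{B_1,B_1}\bv_{B_1}= \bv_S^\top(\bSigmabar_{B,B}^{-1})_{S,S}\bv_S+ 2\bu_{B_1}^\top\bv_{B_1}$,
which is valid only because $\bu$ is supported on $B=S\cup B_1$ so that $\bv_B=\bSigmabar_{B,B}\bu_B$. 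If you replace $B$ by $B_0=S\cup T_0$ with $T_0\subsetneq T$, then $\bv_{B_0}\ne\bSigmabar_{B_0,B_0}\bu_{B_0}$, the identity collapses, and the cross-Gram terms between $T_0$ and $T\setminus T_0$ have no sign; choosing $T_0$ to consist of the largest $|h_j|$ does not make the discarded coordinates "only relax the inequality." So the contradiction you aim for, $2(1-\eta)^2m<2(1-\eta)^2m$, is not actually reachable along this route.

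The paper closes the circularity differently: it first proves the conditional implication "if $|B_1|\le m$ then $|B_1|<m$" (where $B_1$ is the set of active constraints outside $S$), and then invokes a continuity argument along the Lasso path. As $\lam$ decreases from $+\infty$, the set $B_1$ changes one element at a time, and the hypothesis on $\bg=\bX^\top\bP_S^\perp\bep/(n\lam)$ is monotone in $\lam$ (larger $\lam$ shrinks $|\bg|$), so the implication holds at every intermediate penalty level; hence $|B_1|$ can never reach $m$, because the first time it equaled $m$ the implication would force it below $m$. This homotopy step (borrowed from the proof of Lemma 1 in the MC+ paper) is the missing ingredient in your argument, and some device of this kind is genuinely needed. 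Separately, your middle section leaves the derivation of the key quadratic-form inequality at the level of a sketch; the introduction of $\bbetabar$, the vectors $\bs=\bg-\bv$ and $\bw$ with $w_j=I\{j\in B_1\}s_j(s_jg_j-1)_+$, and the identity above are where the factor $\phi_{\rm cond}-1$ actually comes from, and they would need to be carried out in full.
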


\begin{proof}
    The SRC \eqref{SRC} can be written as  \eqref{SRC-rewritten}.
Let $\bbetabar$ the oracle LSE satisfying $\supp(\bbetabar)\subseteq S$ 
and $\bX\bbetabar = \bP_S\by$. Let ${B}$ satisfy
\bes
S\cup\Shat \subseteq {B}\subseteq 
S\cup \big\{j: |\bx_j^T(\by-\bX\hbbeta)/n|=\lam\big\}. 
\ees
Let $\bSigmabar = \bX^\top\bX/n$, $B_1 = {B} \setminus S$, 
$\bu = (\hbbeta - \bbetabar)/\lam$ and $\bv = \bSigmabar \bu$. 
By algebra, 
\bes
\bv_{B}^\top\bSigmabar_{{B},{B}}^{-1}\bv_{B}
+ \bv_{B_1}^\top\big(\bSigmabar_{{B},{B}}^{-1}\big)_{{B_1},{B_1}}\bv_{B_1}
= \bv_S^\top\big(\bSigmabar_{{B},{B}}^{-1}\big)_{S,S}\bv_S+ 2\bu_{B_1}^\top\bv_{B_1}. 
\ees
Let $\bs = \bX^\top(\by - \bX\hbbeta)/(n\lam) = \bg-\bv$. 
By the KKT conditions, $s_j=\sgn(u_j)$ for $j\in B_1$, so that 
$\bu_{B_1}^\top\bv_{B_1}
= \sum_{j\in {B}\setminus S} |u_j|(s_jg_j-1) \le \bv_{B}^\top\bSigmabar_{{B},{B}}^{-1}\bw_{B}$, 
where $\bw$ is the vector with elements $w_j = I\{j\in {B_1}\}s_j(s_jg_j-1)_+$. By algebra, 
\bes
&& (\bv-\bw)_{B}^\top\bSigmabar_{{B},{B}}^{-1}(\bv-\bw)_{B}
+ \bv_{B_1}^\top\big(\bSigmabar_{{B},{B}}^{-1}\big)_{{B_1},{B_1}}\bv_{B_1}
\cr &\le& \bv_S^\top\big(\bSigmabar_{{B},{B}}^{-1}\big)_{S,S}\bv_S+ \bw_{B}\bSigmabar_{{B},{B}}^{-1}\bw_{B}. 
\ees
It follows that 
\bes
\|\bv_S\|_2^2+\|\bv_{B_1}-\bw_{B_1}\|_2^2 + \|\bv_{B_1}\|_2^2 \le \phi_{\rm cond}\big(\bSigmabar_{{B},{B}}\big)
\big(\|\bv_S\|_2^2+\|\bw_{B_1}\|_2^2\big).
\ees 
Moreover, as $v_j-w_j = -s_j(s_jg_j-1)_-$ for $j\in {B_1}$, $\|\bv_{B_1}\|_2^2=\|\bv_{B_1}-\bw_{B_1}\|_2^2+\|\bw_{B_1}\|_2^2$. 
This and the inequality above imply 
\bes
2\|\bv_{B_1}-\bw_{B_1}\|_2^2 \le \phi_{\rm cond}\big(\bSigmabar_{{B},{B}}-1\big)
\big(\|\bv_S\|_2^2+\|\bw_{B_1}\|_2^2\big) 
\ees
As $\bv = \bg-\bs$ and $\bg_{S}=0$, $\|\bv_S\|_2^2=|S|$.
We also have $\|\bw_{B_1}\|_2^2 \le \sum_{j=1}^p (s_jg_j-1)_+^2$ 
so that $2\|\bv_{B_1}-\bw_{B_1}\|_2^2\le C_0(|S| + \|(|\bg| -1)_+\|_2^2)$. 
As $s_j=\sgn(u_j)$ and $|v_j-w_j| = (1-s_jg_j)_+$ in ${B_1}$, 
\bes
2(1-\eta)^2|{B_1}| &\le& 2\|\bv_{B_1}-\bw_{B_1}\|_2^2+4(1-\eta)\sum_{j=1}^p(s_jg_j-\eta)_+
\cr &\le& 
C_0 \big(|S| + \|(|\bg|-1)_+\|_2^2 \big) + 4(1-\eta) \|(|\bg|-\eta)_+\|_1
\cr &<& 
C_0 \big(|S| + \epsilon_2k\big) +\epsilon_1k. 
\ees
Thus, by \eqref{SRC-rewritten}, $|{B_1}|\le m$ implies $|{B_1}| < m$. 
As $|{B_1}|$ is {allowed to} change one-at-a-time 
along the Lasso path from $\lam=\infty$ and the {condition on $\bg$} is monotone in $\lam$, 
$|{B_1}|  <  m$ holds for all penalty levels satisfying the condition on {$\bg$}. 
For details of this argument see \cite[Proof of Lemma 1]{zhang10-mc+}. 
\end{proof}

\section{Upper bound on the sparsity of
the Lasso under the RE condition}
\label{sec:proof-upperbound-expected-sparsity-RE}

\begin{proof}[Proof of \Cref{thm:upperbound-expected-sparsity-RE}]
Let $\mu=(1+\tau)\sigma\sqrt{2\log(ep/(s_0\vee 1))/n}$ and $\lambda=(1+\gamma)\mu$.
For each $j\in[p]$ set $g_j = (\tau + 1)\bep^\top \bX\be_j/n$. 
By the KKT conditions of the Lasso we have
$$ (\tau \bep^\top\bX\bh + \|\bX\bh\|^2)/ n
\le \bg^\top\bh - \lambda \|\bh_{S^c}\|_1 - \lam\,\sgn(\bbeta_S)^\top\bh_S, 
$$
where $\bh={\lasso}-\bbeta$.
Define $T= \{\bu\in\R^p:\|\bu_{S^c}\|_1 < c_0\sqrt{s_0\vee 1}\|\bu\|\}$ as well as
\begin{align*}
& f_{\lambda,\mu}(\bep) 
=\sup_{\bu\in T}
\frac{\big(\bg^\top\bu - \mu \|\bu_{S^c}\|_1 - \lam\,\sgn(\bbeta_S)^\top\bu_S\big)_+}{\|\bu\|},\\
& g_{\lambda,\mu}(\bep) = 
 \sup_{\bu\not\in T,\|\bX\bu\|>0}\frac{\big(\bg^\top\bu - \mu \|\bu_{S^c}\|_1 - \lam\,\sgn(\bbeta_S)^\top\bu_S 
- \gamma \lam c_0\sqrt{s_0\vee 1}\|\bu\|\big)_+}{\|\bX\bu\|/\sqrt{n}}
\end{align*}
where $a_+=\max(a,0)$ for any real $a$.
Let $\Omega$ be the event $\Omega=\{ \bh \in T\}$ and $I_\Omega$ be its indicator function.
Using the elementary inequality $2ab - b^2 \le a^2$ we get
$$ 
(2\tau \bep^\top\bX\bh + \|\bX\bh\|^2)/n
\le 
I_{\Omega} f_{\lambda,\lam}(\bep)^2/\RE(S,c_0)^2
    +
    I_{\Omega^c} g_{\lambda,\lambda}(\bep)^2.
$$
We now bound the expectation $\E[f_{\lambda,\mu}(\bep)^2]$.
By simple algebra on each coordinate and the Cauchy-Schwarz inequality,
\begin{align*}
f_{\lambda,\mu}(\bep)^2 = 
\sum_{j\in S} (g_j - \lambda \sgn(\beta_j))^2
+\sum_{j\notin S} (|g_j| - \mu)^2.
\end{align*}
Each $g_j$ is centered, normal with variance at most $\omega^2=(1+\tau)^2\sigma^2/n$,
hence \Cref{lemma:calculation-RE} implies that 
$\E[f_{\lambda,\mu}(\bep)^2] \le
s_0(\lambda^2 + \omega^2)+(s_0\vee 1)\omega^2$, 
which is then bounded by $2^{-1}(\gamma c_0)^2(s_0\vee 1)\lambda^2$ 
by the condition on $c_0$.

Note that by construction, the function $\bep\to g_{\lambda,\mu}(\bep)$
is $((1+\tau)/\sqrt n)$-Lipschitz, so that by the Gaussian concentration inequality (see, e.g., \cite[Theorem 10.17]{boucheron2013concentration}),
$$
\E[g_{\lambda,\mu}(\bep)^2]
=\int_0^{+\infty} \P\left[g_{\lambda,\mu}(\bep)> \sqrt t\right] dt
\le
\int_0^{+\infty} \P\left[\omega N(0,1)  > \sqrt t\right] dt = \omega^2/2, 
$$
provided that the median of $g_{\lambda,\mu}(\bep)$ is zero.
We now prove that the median is indeed zero.
The event $\{f_{\lambda,\mu}(\bep)^2\le 2 \E[f_{\lambda,\mu}(\bep)^2]\}$ has probability
at least $1/2$ thanks to Markov's inequality.
Furthermore, we proved above that 
$2 \E[f_{\lambda,\mu}(\bep)^2]\le (c_0\gamma)^2(s_0\vee 1)\lambda^2$.
On this event of probability at least $1/2$,
for any $\bu\in T$ we have
\begin{align*}
\big(\bg^\top\bu - \mu \|\bu_{S^c}\|_1 - \lam\,\sgn(\bbeta_S)^\top\bu_S 
- (c_0\gamma)\lam\sqrt{s_0\vee 1}\|\bu\|\big)_+=0.
\end{align*}
We have established that the median of $g_{\lambda,\mu}$ is non-positive
and the proof is complete.
\end{proof}

\section{Consistency of SURE for SURE in the Lasso case}
\label{sec:proof-consistency-SURE-lasso}

\begin{proof}[Proof of \Cref{prop:consistency-SURE-for-SURE-Lasso}]
%
%
    {(i)-(iv) follow directly from \Cref{th-consistency}. For (v),} 
    Markov's  inequality 
    with the bound in (iii)
    followed by the bound $\sigma^4n\le\E[\hat R_{\tSURE}]$ from (ii)
    implies
    $$|\hat R_{\tSURE}-\E[\hat R_{\tSURE}]|^2
    \le 4(n\alpha)^{-1} \sigma^4 n \E[\hat R_{\tSURE}]
    \le 4(n\alpha)^{-1} \E[\hat R_{\tSURE}]^2
    $$
    with probability at least $1-\alpha$,
    and $\E[\hat R_{\tSURE}]\le (1-4(n\alpha)^{-1/2})^{-1}\hat R_{\tSURE}$ on this event
    by the triangle inequality.
    The proof is completed using
    $\P(|\SURE-\|\bX\lasso-\mu\|^2|\le \gamma^{-1} \E[\hat R_{\tSURE}])\ge 1-\gamma$.
    which follows by another application of Markov's inequality.

\end{proof}

\bibliographystyle{amsalpha}
\bibliography{2nd-order-stein.bib}

\end{document}